\title{{\bf Estimates for the maximal singular integral in terms of the singular integral: the case of even kernels}}
\author{\Large{\Large Joan Mateu, Joan Orobitg and Joan Verdera}}
\newtheorem*{teor}{Theorem}
\newtheorem{co}{Corollary}
\newtheorem{lemma}[co]{Lemma}
\newtheorem*{CZ}{Lemma}
\newtheorem*{lemanom}{Lemma}
\theoremstyle{definition}
\newtheorem{example}{Example}
\newtheorem{question}{Question}
\newtheorem{remark}{Remark}
\newtheorem*{gracies}{Acknowledgements}
\newcommand{\Rn}{{\mathbb R}^n}
\newcommand{\C}{\mathbb{C}}
\newcommand{\BC}{\Rn \setminus \overline B}
\newcommand{\n}{\frac{n}{2}}
\begin{document}

\date{}

\maketitle

\begin{abstract}
Let $T$ be a smooth homogeneous Calder\'{o}n-Zygmund singular integral
operator in~$\Rn$. In this paper we study the problem of controlling
the maximal singular integral~$T^{\star}f$ by the singular
integral~$Tf$. The most basic form of control one may consider is
the estimate of the $L^2(\Rn)$ norm of $T^{\star}f$ by a constant
times the $L^2(\Rn)$ norm of $Tf$.  We show that if $T$ is an even
higher order Riesz transform, then one has the stronger pointwise
inequality $T^{\star}f(x) \leq  C \, M(Tf)(x)$, where $C$ is a
constant and $M$ is the Hardy-Littlewood maximal operator. We prove
that the $L^2$ estimate of $T^{\star}$ by $T$ is equivalent, for
even smooth homogeneous Calder\'{o}n-Zygmund operators, to the pointwise
inequality between $T^{\star}$ and $M(T)$. Our main result
characterizes the $L^2$ and pointwise inequalities in terms of an
algebraic condition expressed in terms of the kernel
$\frac{\Omega(x)}{|x|^n}$ of $T$, where $\Omega$ is an even
homogeneous function of degree~$0$, of class $C^\infty(S^{n-1})$ and
with zero integral on the unit sphere $S^{n-1}$. Let $\Omega= \sum
P_j$ the expansion of $\Omega$ in spherical harmonics $P_j$ of
degree~$j$. Let $A$ stand for the algebra generated by the identity
and the smooth homogeneous Calder\'{o}n-Zygmund operators. Then our
characterizing condition states that $T$ is of the form~$R\circ U$,
where $U$ is an invertible operator in $A$ and $R$ is a higher order
Riesz transform associated with a homogeneous harmonic
polynomial~$P$ which divides each $P_j$ in the ring of polynomials
in $n$~variables with real coefficients.
\end{abstract}

\section{Introduction}

Let $T$ be a smooth homogeneous Calder\'{o}n-Zygmund singular integral
operator on~$\Rn$ with kernel
\begin{equation}\label{eq1}
K(x)=\frac{\Omega(x)}{|x|^n},\quad x \in \Rn \setminus \{0\}\,,
\end{equation}
where $\Omega$ is a (real valued) homogeneous function of degree $0$
whose restriction to the unit sphere $S^{n-1}$ is of
class~$C^\infty(S^{n-1})$ and satisfies the cancellation property
$$
\int_{|x|=1} \Omega(x)\,d\sigma(x)=0\,,
$$
$\sigma$ being the normalized surface measure on $S^{n-1}$. Recall
that $Tf$ is the principal value convolution operator
\begin{equation}\label{eq2}
Tf(x)= P.V. \int f(x-y)\,K(y)\,dy \equiv \lim_{\epsilon\rightarrow
0} T^\epsilon f(x)\,,
\end{equation}
where $T^\epsilon$ is the truncation at level $\epsilon$ defined by
$$
T^{\epsilon}f(x)= \int_{| y-x| > \epsilon} f(x-y) K(y) \,dy\,.
$$
As we know, the limit in \eqref{eq2} exists for almost all $x$ for
$f$ in $L^p(\Rn)$, $1 \leq p < \infty$.

Let $T^{\star}$ be the maximal singular integral
$$
T^{\star}f(x)= \sup_{\epsilon > 0} | T^{\epsilon}f(x)|, \quad x \in
\Rn\,.
$$

In this paper we consider the problem of controlling $T^{\star} f$
by $Tf$. The most basic form of control one may think of is the
$L^2$ estimate
\begin{equation} \label{eq3}
 \| T^{\star} f  \|_2 \leq  C \| T f \|_2,\quad f \in L^2(\Rn) \,.
\end{equation}
Another way of saying that $T^{\star} f$ is dominated by  $Tf$,
apparently much stronger, is provided by the pointwise inequality
\begin{equation}\label{eq4}
T^{\star}f(x) \leq C \, M(Tf)(x),\quad x \in \Rn\,,
\end{equation}
where $M$ denotes the Hardy-Littlewood maximal operator. Notice that
\eqref{eq4} may be viewed as an improved version of classical
Cotlar's inequality
\begin{equation*}\label{}
T^{\star}f(x) \leq C  \left(M(Tf)(x) + Mf (x)\right),\quad x \in
\Rn\,,
\end{equation*}
because the term involving $Mf$ is missing in the right hand side of
\eqref{eq4}.

We prove that if $T$ is an even higher order Riesz transform, then
\eqref{eq4} holds.  Recall that $T$ is a higher order Riesz
transform if its kernel is given by a function $\Omega$ of the form
$$
\Omega(x)=\frac{P(x)}{|x|^d}, \quad x \in \Rn \setminus \{0\}\,,
$$
with $P$ a homogeneous harmonic polynomial of degree $d \geq 1$. If
$P(x)= x_j$, then one obtains the $j$-th Riesz transform $R_j$. If
the homogeneous polynomial $P$ is not required to be harmonic, but
has still zero integral on the unit sphere, then we call~$T$~a
polynomial operator.

Thus, if $T=R$ is an even higher order Riesz transform, one has the
weak $L^1$~type inequality
\begin{equation}\label{eq5}
\|R^\star f \|_{1,\infty} \leq C\| Rf \|_1 \,,
\end{equation}
which combined with the classical weak $L^1$ type estimate
$$
\|R^\star f \|_{1,\infty} \leq C\| f \|_1 \,,
$$
yields the sharp inequality
$$
\| R^{\star} f  \|_{1,\infty} \leq  C  \min \{\| f \|_1, \| Rf
\|_1\}\,.
$$
In \cite{MV2} one proved \eqref{eq5} for the Beurling transform in
the plane and also that \eqref{eq5}~fails for the Riesz transforms
$R_j$. Therefore the assumption that the operator is even is
crucial.

The question of estimating $T^{\star}f$ by $Tf$ was first raised
in~\cite{MV2}. The problem originated in an attempt to gain a better
understanding of how one can obtain a. e. existence of principal
values of truncated singular integrals from $L^2$ boundedness, for
underlying measures more general than the Lebesgue measure in $\Rn$.
This in turn is motivated by a problem of David and Semmes
\cite{DS}, which consists in deriving uniform rectifiability of a
$d$-dimensional Ahlfors regular subset of $\Rn$ from the $L^2$
boundedness of the Riesz kernel of homogeneity $-d$ with respect to
$d$-dimensional Hausdorff measure on the set . For more details on
that see the last section of \cite{MaV}.

Our main result states that for even operators inequalities
\eqref{eq3} and \eqref{eq4} are equivalent to an algebraic condition
involving the expansion of $\Omega$ in spherical harmonics. This
condition may be very easily checked in practice and so, in
particular,  we can produce extremely simple examples of even
polynomial operators for which \eqref{eq3} and~\eqref{eq4} fail. For
these operators no control of $T^{\star}f$ by $Tf$ seems to be
known. To state our main result we need to introduce a piece of
notation.

Recall that $\Omega$ has an expansion in spherical harmonics, that
is,
\begin{equation}\label{eq6}
\Omega(x) = \sum_{j=1}^\infty P_j(x), \quad x \in S^{n-1}\,,
\end{equation}
where $P_j$ is a homogeneous harmonic polynomial of degree $j$. If
$\Omega$ is even, then only the $P_j$ of even degree $j$ may be
non-zero.


An important role in this paper will be played by the algebra $A$
consisting of the bounded operators on $L^2(\Rn)$ of the form
$$
\lambda I + S\,,
$$
where $\lambda$ is a real number and $S$ a smooth homogeneous
Calder\'{o}n-Zygmund operator.

Our main result reads as follows.

\begin{teor} \label{T}
Let $T$ be an even smooth homogeneous Calder\'{o}n-Zygmund operator with
kernel~\eqref{eq1} and assume that $\Omega$ has the
expansion~\eqref{eq6}. Then the following are equivalent.
\begin{enumerate}
\item[(i)]
$$
T^{\star}f(x) \leq C \, M(Tf)(x),\quad x \in \Rn\,.
$$
\item [(ii)]
$$
 \| T^{\star} f  \|_2 \leq  C \| T f \|_2,\quad f \in L^2(\Rn) \,.
$$
\item [(iii)]
The operator $T$ can be factorized as $T=R\circ U$, where $U$ is an
invertible operator in the algebra $A$ and $R$ is a higher order
Riesz transform associated with a harmonic homogeneous polynomial
$P$ which divides each $P_j$ in the ring of polynomials in $n$
variables with real coefficients.
\end{enumerate}
\end{teor}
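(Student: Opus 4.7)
The implication (i)$\Rightarrow$(ii) is immediate: taking $L^2$ norms in the pointwise inequality and invoking the $L^2$ boundedness of the Hardy--Littlewood maximal operator gives (ii). The substantive content therefore lies in proving (iii)$\Rightarrow$(i) and (ii)$\Rightarrow$(iii).

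For (iii)$\Rightarrow$(i) I would bootstrap on the pointwise bound for even higher order Riesz transforms mentioned in the introduction, namely $R^\star g(x) \le C\, M(Rg)(x)$. Applying this to $g = Uf$ yields
$$
R^\star(Uf)(x) \le C\, M(R(Uf))(x) = C\, M(Tf)(x).
$$
The remaining task is to compare $T^\star f$ with $R^\star(Uf)$. Writing $U = \lambda I + S$, the kernel of $T = R\circ U$ is $\lambda K_R + K_R * K_S$, while $R^\epsilon(Uf)$ arises by truncating $K_R$ first and then composing with $Uf$. The discrepancy between $T^\epsilon f$ and $R^\epsilon(Uf)$ comes from the difference between truncating the composed kernel $K_R * K_S$ and performing the two truncations separately; using the standard smooth Calder\'on--Zygmund estimates together with the invertibility of $U$ in $A$ (which lets me trade $Mf$ for $M(U^{-1}Tf)$, hence for $M(Tf)$ up to algebra operations), I would show this discrepancy is controlled pointwise by $M(Tf)$.

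For (ii)$\Rightarrow$(iii) I would pass to the Fourier side. The multiplier $m_T$ of $T$ is a smooth function on $S^{n-1}$ whose spherical harmonic expansion is a rescaling of $\sum P_j$. The $L^2$ estimate $\|T^\star f\|_2 \le C\|Tf\|_2$ is a strong rigidity condition: roughly, every common zero of $m_T$ on $S^{n-1}$ must be ``explained'' by a common factor of the $P_j$, because otherwise one can build test functions $f$ with $Tf$ small but $T^\star f$ large by concentrating in directions near such zeros. I would extract a homogeneous harmonic polynomial $P$ whose zero set on $S^{n-1}$ coincides with that of $m_T$, show that $P$ divides every $P_j$ in $\mathbb{R}[x_1,\dots,x_n]$ (using the harmonicity of the $P_j$ and the algebra of spherical harmonics), and then verify that the quotient multiplier $m_T/(c_d P(\xi)/|\xi|^d)$ extends to a smooth homogeneous function of degree $0$ that is bounded and bounded away from $0$. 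This quotient is the symbol of the desired $U \in A$, and the uniform bound from below gives its invertibility in $A$, producing the factorization $T = R\circ U$.

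The main obstacle will be the implication (ii)$\Rightarrow$(iii). Converting the analytic $L^2$ inequality into the rigid algebraic conclusion that a single harmonic polynomial divides \emph{every} spherical harmonic component $P_j$ is the heart of the theorem; it requires detecting the common zeros of $m_T$ from the interplay between $T$ and $T^\star$ on carefully chosen test functions, and then matching them against the polynomial structure of the expansion. Equally delicate is the final step of proving that the residual factor is bounded below: without this, $U$ would only be bounded, not invertible, and the factorization would fall outside the algebra $A$.
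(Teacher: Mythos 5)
Your reduction of (i)$\Rightarrow$(ii) to the $L^2$ boundedness of $M$ is fine, but both substantive implications have genuine gaps. For (iii)$\Rightarrow$(i), the bootstrap $R^\star(Uf)(x)\le C\,M(R(Uf))(x)=C\,M(Tf)(x)$ is legitimate, but the remaining comparison of $T^\star f$ with $R^\star(Uf)$ is not a ``standard Calder\'on--Zygmund estimate'': the discrepancy between truncating the kernel of $T=R\circ U$ and truncating only the kernel of $R$ after applying $U$ is, by the usual Cotlar-type arguments, controlled by $Mf$ (plus $M(Tf)$), and eliminating the $Mf$ term is precisely the content of the theorem, not something one can appeal to. Your proposed remedy --- trading $Mf$ for $M(U^{-1}Tf)$ and hence for $M(Tf)$ ``up to algebra operations'' --- fails because the maximal function does not pass pointwise through singular integrals: for a Calder\'on--Zygmund operator $V$ one does not have $M(Vg)(x)\le C\,M(g)(x)$. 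The paper's route is entirely different: after normalizing to the truncation at level $1$ at the origin, one constructs a function $\gamma\in L^\infty(\Rn)$ with decay $|\gamma(x)|\le C|x|^{-n-1}$ for $|x|\ge 2$ such that $K(x)\chi_{\BC}(x)=T(\gamma)(x)$, whence $T^1f(0)=\int \gamma\,Tf\le C\,M(Tf)(0)$. Building $\gamma$ is where the hypothesis (iii) is really used (solving $P(\partial)\psi=P_{2j}(x)|x|^{2k}\chi_B(x)$ via the divisibility $P\,|\,P_{2j}$), and it requires a special $L^\infty$ bound for $T(f\chi_B)$ with $f$ Lipschitz valid only for even kernels (Lemma~\ref{Lip}), plus, for non-polynomial $\Omega$, a compactness argument with estimates uniform in the degree (Lemmas~\ref{bN} and the multiplier-to-kernel bound). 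None of this is supplied by your sketch.

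For (ii)$\Rightarrow$(iii), the heuristic of ``test functions concentrated near the zeros of $m_T$'' is not carried out, and the step ``extract a homogeneous harmonic polynomial $P$ whose zero set coincides with that of $m_T$'' is not well defined: a zero set does not determine a polynomial, and there is no a priori reason such a harmonic $P$ exists; in the paper $P$ is simply the lowest-degree nonzero component $P_{2j_0}$, and the identification of zero sets is part of the conclusion, not the starting point. The actual mechanism is to apply the $L^2$ hypothesis to the truncated operator $T^1$, whose kernel is $K\chi_{\BC}=T(b)+S\chi_B$, and to convert it via Plancherel into the multiplier inequality $|\widehat{S\chi_B}(\xi)|\le C|Q(\xi)|/|\xi|^{2N}$. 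Expanding $\widehat{S\chi_B}(r\xi_0)$ as a power series in $r$ and proving that certain combinatorial constants $C_{2j}$ do not vanish (the lengthy Lemma~\ref{L10}) yields the Zero Sets Lemma $Z(Q)\subset Z(P_{2j})$; divisibility then follows from the Dimension Lemma (a sign-changing continuous function has zero set of positive $H^{n-1}$ measure) together with an algebraic Division Lemma, applied iteratively to the irreducible factors of $P_{2j_0}$, and the same power-series analysis at the last step gives $\sum_j\gamma_{2j}Q_{2j-2j_0}(\xi)\ne 0$ on $S^{n-1}$, i.e.\ the invertibility of $U$ --- the point you rightly flag as delicate but do not prove. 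Finally, the general case does not reduce trivially to the polynomial one: there is no obvious way to deduce $\|T_N^\star f\|_2\le C\|T_Nf\|_2$ from (ii), and the paper has to prove stabilization in $N$ of the coefficients $a_{2p}^N$ (Lemma~\ref{L11}) and quantitative bounds (Lemma~\ref{L12}) to pass to the limit; your outline does not address this.
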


Two remarks are in order.


\begin{remark}
Observe that condition~$(iii)$ is algebraic in nature. This is one
of the reasons that makes the proof difficult. Condition $(iii)$ can
be reformulated in a more concrete fashion as follows. Assume that
the expansion of $\Omega$ in spherical harmonics is
$$
\Omega(x) = \sum_{j=j_0}^\infty P_{2j}(x),\quad P_{2j_0} \neq 0\,.
$$
Then $(iii)$ is equivalent to the following

\begin{enumerate}
\item[$(iv)$] {\it For each $j$ there exists a homogeneous polynomial
$Q_{2j-2j_0}$ of degree~$2j-2j_0$ such that $P_{2j}=
P_{2j_0}\,Q_{2j-2j_0}$ and $\sum_{j=j_0}^\infty \gamma_{2j}\,
Q_{2j-2j_0}(\xi) \neq 0, \quad \xi \in S^{n-1}$.}
\end{enumerate}
\end{remark}

Here for a positive integer $j$ we have set
\begin{equation}\label{eq7}
\gamma_j = i^{-j}\, \pi^{\frac{n}{2}}
\frac{\Gamma(\frac{j}{2})}{\Gamma ( \frac{n+j}{2})}\,.
\end{equation}
The quantities $\gamma_j$ appear in the computation of the Fourier
multiplier of the higher order Riesz transform $R$ with kernel given
by a homogeneous harmonic polynomial~$P$ of degree~$j$. One has (see
\cite[p.~73]{St})
$$
\widehat{Rf}(\xi)= \gamma_j\frac{P(\xi)}{|\xi|^j} \,
\hat{f}(\xi),\quad f \in L^2(\Rn)\,.
$$
As we will show later, the series $\sum_{j=j_o}^\infty
\gamma_{2j}\,Q_{2j-2j_0}(x)$ is convergent in $C^\infty(S^{n-1})$.
If $U$~is defined as the operator in the algebra~$A$ whose Fourier
multiplier is~$\gamma_{2j_0}^{-1}$~times the sum of the preceding
series, then $T=R \circ U$ for the higher order Riesz transform~$R$
given by the polynomial~$P_{2j_0}$. This shows that $(iii)$ follows
from $(iv)$.

To show that $(iii)$ implies $(iv)$ we prove that $P=
\lambda\,P_{2j_0}$ for some real number $\lambda \neq 0$. Since $P$
divides $P_{2j_0}$ by assumption, we only need to show that the
degree~$d$ of~$P$ must be~$2 j_0$. Now, let $\mu(\xi)$ denote the
Fourier multiplier of $U$, so that $\mu$ is a smooth function with
no zeros on the sphere. The Fourier multiplier of $T$ is
$$
\sum_{j=j_0}^\infty \gamma_{2j}\, P_{2j}(\xi) =
\gamma_{d}\,P(\xi)\,\mu(\xi),\quad \xi \in S^{n-1}\,.
$$
If $d$ is less than $2j_0$, then $P$ is orthogonal to all $P_{2j}$
\cite[p.~69]{St} and so
$$
\int_{|\xi|=1} P(\xi)^2 \,\mu(\xi)\,d\xi = 0\,,
$$
which yields $P(\xi)=0$, $|\xi|=1$, a contradiction.

\begin{remark}
Condition $(iii)$ is rather easy to check in practice. For instance,
take $n=2$ and consider the polynomial of fourth degree
$$
P(x,y)= xy+ x^4+y^4-6\, x^2y^2\,.
$$
The polynomial operator associated to~$P$ does not satisfy $(i)$ nor
$(ii)$, because the definition of $P$ above is also the spherical
harmonics expansion of $P$ and $xy$ clearly does not divide $
x^4+y^4-6\,x^2 y^2$. Section~7 contains other examples of polynomial
operators that do not satisfy $(i)$ nor $(ii)$.

On the other hand, the polynomial operator associated with
$$
P(x,y)= xy+ x^3 y-y^3 x\,,
$$
does satisfy $(i)$ and $(ii)$, but this is not the case for the
operator determined by
$$
P(x,y)= xy+ 2 (x^3 y-y^3 x)\,,
$$
although $xy$ obviously divides $x^3 y-y^3 x$. See section~8 for the
details.

Thus the condition on $\Omega$ so that $T$ satisfies $(i)$ or $(ii)$
is rather subtle.
\end{remark}

Having clarified the statement of the Theorem and some of its
implications, we say now a few words on the proofs and the
organization of the paper.

We devote sections 2, 3 and 4 to the proof of ``$(iii)$ implies
$(i)$", which we call the sufficient condition. In section~2 we
prove that the even higher order Riesz transforms satisfy $(i)$.
Section~3 is devoted to the proof of the sufficient condition for
polynomial operators. The argument is an extension of that used in
the previous section. The drawback is that we loose control on the
dependence of the constants on the degree of the polynomial. The
main difficulty we have to overcome in section~4 to complete the
proof of the sufficient condition in the general case, is to find a
second approach to the polynomial case which gives some estimates
with constants independent of the degree of the polynomial. This
allows us to use a compactness argument to finish the proof. It is
an intriguing fact that the approach in section~3 cannot be
dispensed with, because it provides certain properties which are
vital for the final argument and do not follow otherwise.

In sections~5 and 6 we prove the necessary condition, that is,
``$(ii)$ implies $(iii)$''. In section 5 we deal with the polynomial
case. Analysing the inequality $(ii)$ via Plancherel at the
frequency side we obtain various inclusion relations among zero sets
of certain polynomials. This requires a considerable combinatorial
effort for reasons that will become clear later on. We found in
Maple a formidable ally in formulating the right identities which
were needed, which we proved rigorously afterwards. In a second step
we solve the division problem which leads us to $(iii)$ by a
recurrent argument with some algebraic geometry ingredients, the
Hilbert's Nullstellensatz in particular. The question of
independence on the degree of the polynomial appears again, this
time related to the coefficients of certain expansions. We deal with
this problem in section~6. Section 7 is devoted to the proof of the
intricate combinatorial lemmas used in the previous sections. In
section 8 we discuss some examples and we ask a couple of questions
that we have not been able to answer.

Our methods are a combination of classical Fourier analysis
techniques and Calder\'{o}n-Zygmund theory with potential theoretic
ideas coming from our previous work \cite{MO}, \cite{MPV},
\cite{MV1}, \cite{MNOV}, \cite{Ve1} and \cite {Ve2}.

As it was discovered in \cite{MV2}, there is remarkable difference
between the odd and even cases for the problem we consider. To keep
at a reasonable size the length of this article we decided to only
deal here with the even case, which is more difficult, because one
needs special $L^\infty$ estimates for singular integrals, which
hold only in the even case. The results for the odd case will be
published elsewhere~\cite{MOPV}.

The $L^\infty$ estimates mentioned above are not obvious even for
the simplest even homogeneous Calder\'{o}n-Zygmund operator, the
Beurling transform, which plays an important role in planar
quasiconformal mapping theory. An application of our estimates to
planar quasiconformal mappings is given in \cite{MOV}.

\section{Even higher order Riesz transforms}

In this section we prove that if $T$ is an even higher order Riesz
transform, then
\begin{equation}\label{eq8}
T^{\star}f(x) \leq C \, M(Tf)(x),\quad x \in \Rn\,.
\end{equation}

Let $B$ be the open ball of center~$0$ and radius~$1$, $\partial B$
its boundary and $\overline B$ its closure. In proving \eqref{eq8}
we will encounter the following situation. We are given a
function~$\varphi$ defined by different formulae in $B$ and $\Rn
\setminus \overline B$, which is differentiable up to order~$N$ on
$B \cup (\BC)$ and whose derivatives up to order $N-1$ extend
continuously up to $\partial B$. The question is to compare the
distributional derivatives of order $N$ with the expressions one
gets on $B$ and $\BC$ by taking ordinary derivatives. The next
simple lemma is a sample of what we need.

\begin{lemma}\label{L1}
Let $\varphi$ be a continuously differentiable function on $B \cup
(\BC) $ which extends continuously to $\partial B$. Then we have the
identity
$$
\partial_j \varphi = \partial_j \varphi(x) \chi_B(x) +   \partial_j
\varphi(x)\chi_{\BC}(x)\,,
$$
where the left hand side is the $j$-th distributional derivative of
$\varphi$.
\end{lemma}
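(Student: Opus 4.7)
\medskip

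The plan is to verify the identity by pairing both sides with an arbitrary test function $\psi \in C_c^\infty(\Rn)$, and reducing the computation to the classical divergence theorem on the two regions $B$ and $\BC$ separately. The key structural observation is that, because $\varphi$ extends continuously across $\partial B$, there is no jump and therefore no singular distribution supported on the sphere; the boundary integrals that appear in the two integrations by parts will have opposite signs (opposite outward normals) and equal integrands, so they cancel.

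In detail, I would start from the definition of distributional derivative,
$$
\langle \partial_j \varphi, \psi\rangle = -\int_{\Rn} \varphi(x)\,\partial_j \psi(x)\,dx,
$$
choose a ball $B_R$ of radius $R$ large enough to contain the support of $\psi$, and split the integral as $-\int_B \varphi\,\partial_j\psi - \int_{\BC\cap B_R} \varphi\,\partial_j\psi$. On each region $\varphi$ is $C^1$ up to the boundary $\partial B$ (with continuous extension), so the classical divergence theorem applies and yields, on the inner piece,
$$
-\int_B \varphi\,\partial_j\psi\,dx = \int_B (\partial_j\varphi)\,\psi\,dx - \int_{\partial B} \varphi\,\psi\,\nu_j\,d\sigma,
$$
with $\nu$ the outer unit normal to $B$, and on the outer piece
$$
-\int_{\BC\cap B_R} \varphi\,\partial_j\psi\,dx = \int_{\BC} (\partial_j\varphi)\,\psi\,dx + \int_{\partial B} \varphi\,\psi\,\nu_j\,d\sigma,
$$
since the outer normal of $\BC\cap B_R$ at $\partial B$ is $-\nu$, while $\psi$ vanishes near $\partial B_R$. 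The two boundary integrals on $\partial B$ are computed with the same continuous boundary value of $\varphi$, so they cancel, and adding the two pieces gives exactly the pairing of the right-hand side with $\psi$.

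The only subtle point is regularity: for the integration by parts on each side to be legitimate one needs the classical derivatives $\partial_j\varphi$, defined on $B\cup(\BC)$, to be integrable up to $\partial B$ against $\psi$, and one needs the continuous boundary trace of $\varphi$ from each side to coincide. The second is exactly the hypothesis that $\varphi$ extends continuously through $\partial B$; the first is the implicit integrability assumption making the right-hand side a bona fide locally integrable function (so a distribution). If one wishes to be fully rigorous without extra regularity, one can regularize by replacing $B$ with $(1-\delta)B$ and $\BC$ with $\Rn\setminus\overline{(1+\delta)B}$, apply the divergence theorem where $\varphi$ is genuinely smooth, and let $\delta\to 0^+$, using the continuity of $\varphi$ across $\partial B$ to see that the two boundary integrals converge to the same limit and thus still cancel. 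The main (and really only) obstacle is this bookkeeping of the two opposite normals and the continuity hypothesis; once they are lined up correctly the identity follows immediately.
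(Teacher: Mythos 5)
Your argument is correct and coincides with the paper's own proof: pair with a test function, split the integral over $B$ and $\BC$, integrate by parts (Green--Stokes) on each piece, and observe that the two boundary integrals on $\partial B$ cancel because the normals are opposite and $\varphi$ is continuous across the sphere. Your extra remarks on the $(1\pm\delta)B$ regularization and on the integrability of $\partial_j\varphi$ are just technical refinements of the same argument.
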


\begin{proof}
Let $\psi$ be a test function. Then
$$
\langle \partial_j \varphi, \psi \rangle = - \int \varphi\,
\partial_j \psi = - \int_B \varphi \,
\partial_j \psi  - \int_{\BC} \varphi \,
\partial_j \psi \,.
$$
Now apply Green-Stokes' theorem to the domains $B$ and $\BC$ to move
the derivatives from $\psi$ to $\varphi$. The boundary terms cancel
precisely because of the continuity of $\varphi$ on $\partial B$,
and we get
$$
\langle \partial_j \varphi, \psi \rangle =  \int (\chi_B \,
\partial_j \varphi + \chi_{\BC} \,
\partial_j \varphi ) \psi \,dx\,.
$$
\end{proof}

We need an analog of the previous statement for second order
derivatives and radial functions, which is the case we take up in
the next corollary.

\begin{co} \label{C1}
Assume that $\varphi$ is a radial function of the form
$$
\varphi(x)=   \varphi_1(|x|) \chi_B(x) +  \varphi_2(|x|)
\chi_{\BC}(x)\,,
$$
where $\varphi_1$ is continuously differentiable on $[0,1)$ and
$\varphi_2$ on $(1,\infty)$. Let $L$ be a second order differential
operator with constant coefficients. Then the distribution $L
\varphi$ satisfies
$$
L\varphi= L\varphi(x) \chi_B(x) + L\varphi(x) \chi_{\BC}(x)\,,
$$
provided $\varphi_1$, $\varphi_1'$, $\varphi_2$ and $\varphi_2'$
extend continuously to the point~$1$ and the two conditions
$$
\varphi_1(1)=  \varphi_2(1), \quad \varphi_1'(1) = \varphi_2'(1)\,,
$$
are satisfied.
\end{co}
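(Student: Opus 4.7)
The plan is to reduce the statement to two successive applications of Lemma~\ref{L1}, once for each of the two derivatives making up the second order operator~$L$. Write $L=\sum_{i,j}a_{ij}\,\partial_i\partial_j$ with constants $a_{ij}$; by linearity it suffices to prove the identity when $L=\partial_i\partial_j$ for an arbitrary pair $i,j$.

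First I would apply Lemma~\ref{L1} directly to~$\varphi$. The hypothesis of Lemma~\ref{L1} requires $\varphi$ to be continuously differentiable on $B\cup(\BC)$ and to extend continuously to $\partial B$. The first is immediate from the hypotheses on $\varphi_1$ and $\varphi_2$, while the continuous extension across $|x|=1$ is exactly the matching condition $\varphi_1(1)=\varphi_2(1)$ (using that the function is radial, so the extension from each side depends only on~$|x|$). This yields
$$
\partial_j\varphi = \partial_j\varphi(x)\,\chi_B(x) + \partial_j\varphi(x)\,\chi_{\BC}(x)\,,
$$
where on the right hand side $\partial_j\varphi$ is interpreted as the pointwise derivative on each open region.

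Next I would apply Lemma~\ref{L1} a second time, this time to the function $\partial_j\varphi$, in order to take $\partial_i$. For this I must check that the pointwise $\partial_j\varphi$ extends continuously across~$\partial B$. Here radiality is the key: writing $\varphi(x)=\Phi(|x|)$ with $\Phi=\varphi_1$ on $[0,1)$ and $\Phi=\varphi_2$ on $(1,\infty)$, we have
$$
\partial_j\varphi(x) = \Phi'(|x|)\,\frac{x_j}{|x|}\,,
$$
so the two sided limits at a point $x$ with $|x|=1$ agree if and only if $\varphi_1'(1)=\varphi_2'(1)$, which is the second matching condition in the hypothesis. Lemma~\ref{L1} then gives
$$
\partial_i\partial_j\varphi = \partial_i\partial_j\varphi(x)\,\chi_B(x) + \partial_i\partial_j\varphi(x)\,\chi_{\BC}(x)\,,
$$
and summing against the coefficients $a_{ij}$ produces the desired formula for $L\varphi$.

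The only real subtlety — and the step I would want to write out carefully — is verifying that the two scalar matching conditions $\varphi_1(1)=\varphi_2(1)$ and $\varphi_1'(1)=\varphi_2'(1)$ suffice to make all the \emph{vector valued} continuity requirements hold; radiality collapses these vector conditions to the stated scalar ones, and without radiality one would need to match the full gradient componentwise. Apart from that, the argument is a clean iteration of Lemma~\ref{L1}, with no additional boundary terms appearing because the pointwise first derivatives already extend continuously across $\partial B$.
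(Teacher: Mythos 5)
Your proposal is correct and follows essentially the same route as the paper: the paper's proof is exactly the double application of Lemma~\ref{L1}, with the remark that the hypothesis $\varphi_1'(1)=\varphi_2'(1)$ (together with radiality, so that $\partial_j\varphi(x)=\Phi'(|x|)\,x_j/|x|$) gives continuity of all first order partial derivatives across $\partial B$ before the second application. Your extra comment on why the two scalar matching conditions suffice is precisely the observation the paper makes in one line.
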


\begin{proof}
The proof reduces to applying Lemma~\ref{L1} twice. Before the
second application one should remark that the hypothesis $
\varphi_1'(1) = \varphi_2'(1)\,$ gives the continuity of all first
order partial derivatives of $\varphi$.
\end{proof}

We proceed now to describe in detail the main argument for the proof
of  \eqref{eq8}. By translating and dilating one reduces the proof
of \eqref{eq8} to
\begin{equation}\label{eq9}
|T^1 f(0)| \leq C \, M(Tf)(0)\,,
\end{equation}
where
$$
T^1 f(0)= \int_{| y| > 1} f(y) K(y) \,dy
$$
is the truncated integral at level $1$. Recall that the kernel of
our singular integral is
$$
K(x)= \frac{\Omega(x)}{|x|^{n}}= \frac{P(x)}{|x|^{n+d}}\,,
$$
where $P$ is an even homogeneous harmonic polynomial of degree
$d\geq 2$. The idea is to obtain an identity of the form
\begin{equation}\label{eq10}
K(x)\chi_{\BC}(x)=  T(b)(x)\,,
\end{equation}
for some measurable bounded function $b$ supported on $B$. Once
\eqref{eq10} is at our disposition  we get, for $f$ in some $
L^p(\Rn)$, $1\leq p <\infty$,
\begin{equation*}
\begin{split}
T^1 f(0) & = \int \chi_{\BC}(y)\,K(y)\, f(y) \,dy \\
 & = \int T(b)(y)\,f(y)\,dy \\
 &= \int_B b(y) \, Tf(y)\,dy\,,
\end{split}
\end{equation*}
and so \eqref{eq9} follows with $C= V_n \, \|b \|_\infty$, $V_n$
being the volume of the unit ball of $\Rn$.

Let us turn our attention to the proof of \eqref{eq10}. Set $d=2N$
and let $E$ be the standard fundamental solution of the $N$-th power
$\triangle^N$ of the Laplacean. Consider the function
\begin{equation}\label{eq11}
\varphi(x)= E(x)\,\chi_{\BC}(x) + (A_0+A_1\,|x|^2 +\dotsb+
A_{d-1}\,|x|^{2d-2})\,\chi_B(x)\,,
\end{equation}
where the constants $A_0, A_1,\dotsc, A_{d-1}$ are chosen as
follows. Since $\varphi(x)$ is radial, the same is true of
$\triangle^j \varphi$ for each positive integer~$j$. Thus, in order
to apply the preceding corollary $N$~times one needs
$2N=d$~conditions, which (uniquely) determine $A_0, A_1,\dotsc,
A_{d-1}$. Therefore, for some constants $\alpha_1, \alpha_2,\dotsc,
\alpha_{N-1}$,
\begin{equation}\label{eq11bis}
\triangle^N \,\varphi = (\alpha_0+ \alpha_1 |x|^2 +\dotsb+
\alpha_{N-1} |x|^{2(N-1)})\chi_B(x) = b(x)\,,
\end{equation}
where the last identity is a definition of $b$. Since
$$
\varphi = E \star \triangle^N\,\varphi \,,
$$
taking derivatives of both sides we obtain
\begin{equation}\label{eq12}
P(\partial)\,\varphi = P(\partial)\, E \star \triangle^N \,
\varphi\,.
\end{equation}
To compute $P(\partial)E$ we take the Fourier transform
$$
\widehat{P(\partial)E}(\xi)= P(i\xi)\, \hat{E}(\xi)=
\frac{P(\xi)}{|\xi|^d}\,.
$$
On the other hand, as it is well known (\cite[p.~73]{St},
$$
\widehat{P.V.\frac{P(x)}{|x|^{n+d}}}\,(\xi)= \gamma_d
\frac{P(\xi)}{|\xi|^d}\,.
$$
See \eqref{eq7} for the precise value of $\gamma_d$, which is not
important now. We conclude that, for some constant $c_d$ depending
on $d$,
$$
P(\partial)E = c_d \,P.V.\frac{P(x)}{|x|^{n+d}}\,.
$$
Thus
$$
P(\partial)\varphi =
c_d\,P.V.\frac{P(x)}{|x|^{n+d}}\star\,\triangle^N\,\varphi =
 c_d\,T(b)\,.
$$

The only thing left is the computation of $P(\partial)\,\varphi$. We
have, by Corollary~\ref{C1},
$$
P(\partial)\,\varphi = c_d\, K(x)\,\chi_{\BC} +
P(\partial)(A_0+A_1\,|x|^2+\dotsb+
A_{d-1}\,|x|^{2d-2})(x)\,\chi_B(x)\,,
$$
and so, to complete the proof of \eqref{eq10}, we only have to show
that
\begin{equation}\label{eq13}
P(\partial)(|x|^{2j})= 0,\quad 1 \leq j \leq d-1\,.
\end{equation}
Notice that the degree of $P$ may be much smaller than the degree of
$|x|^{2j}$ and so the previous identity is not obvious. Taking the
Fourier transform we obtain
$$
\widehat{P(\partial)(|x|^{2j})}= c_j \,P(\xi)\,\triangle^j
\,\delta\,,
$$
where $\delta$ is the Dirac delta at the origin and $c_j$ a constant
depending on $j$. Let $\psi$ be a test function. Then, since $P$ is
harmonic,
\begin{equation*}
\begin{split}
\langle P(\xi)\,\triangle^j \,\delta, \varphi \rangle &= \langle
\triangle^j \,\delta ,\, P(\xi)\, \varphi(\xi)\rangle \\
& =\langle \triangle^{j-1} \,\delta, \,2\, \nabla P(\xi)\cdot \nabla
\varphi(\xi)+ P(\xi)\,\triangle\varphi(\xi) \rangle\,.
\end{split}
\end{equation*}
Iterating the previous computation we obtain that
$$
\langle P(\xi)\,\triangle^j \,\delta,\, \varphi \rangle = \langle
\delta, D(\xi)\rangle= D(0)\,,
$$
where $D$ is a linear combination of products of the form
$\partial^\alpha \,\varphi(\xi)\,\partial^\beta\,P(\xi)$, with
multi-indeces $\beta$ of length $|\beta| \leq j \leq d-1$. Therefore
$\partial^\beta\,P(\xi)$ is a homogeneous polynomial of degree at
least $d-j \geq 1$, and so $\partial^\beta\,P(0)=0$. This yields
$D(0)=0$ and completes the proof of \eqref{eq13} and, thus, of
\eqref{eq10}.

In fact \eqref{eq13} follows immediately from an identity of Lyons
and Zumbrun \cite{LZ} which will be discussed in the next section,
but we prefer to present here the above independent natural argument
for the reader's convenience.

\section{Proof of the sufficient condition: the polynomial case}

In this section we assume that $T$ is an even  polynomial operator.
This amounts to say that for some even integer $2N$, $N \geq 1$, the
function $ |x|^{2N}\,\Omega(x)$ is a homogeneous polynomial of
degree~$2N$. Such a polynomial may be written as \cite[p.~69]{St}
$$
|x|^{2N}\,\Omega(x)=  P_2(x)|x|^{2N-2}+ \dotsb+
P_{2j}(x)|x|^{2N-2j}+\dotsb+ P_{2N}(x)\,,
$$
where $P_{2j}$ is a homogeneous harmonic polynomial of degree~$2j$,
$1\leq j \leq N $. In other words, the expansion of~$\Omega(x)$ in
spherical harmonics is
$$
\Omega(x)= P_2(x)+P_4(x)+\dotsb+P_{2N}(x),\quad |x|=1\,.
$$

As in the previous section, we want to obtain an expression for the
kernel $K(x)$ off the unit ball $B$. For this we need the
differential operator $Q(\partial)$ defined by the polynomial
$$
 Q(x)=  \gamma_2 \, P_2(x)|x|^{2N-2}+\dotsb+ \gamma_{2j}\,
P_{2j}(x)|x|^{2N-2j}+\dotsb+ \gamma_{2N}\,P_{2N}(x) \,.
$$
If $E$ is the standard fundamental solution of $\Delta^N$, then
$$
Q(\partial)E = P.V. \,K(x)\,,
$$
which may be easily verified by taking the Fourier $E$ transform of
both sides.

Take now the function $\varphi$ of the previous section. We have
$\varphi = E \star \triangle^N\,\varphi$ and thus
$$
Q(\partial)\varphi = Q(\partial)E \star  \triangle^N\,\varphi =
P.V.\, K(x) \star b = T(b)\,,
$$
where $b$ is defined as $ \triangle^N\,\varphi$. On the other hand,
by Corollary~2
\begin{equation}\label{eq14}
Q(\partial)\,\varphi =  K(x)\,\chi_{\BC} +
Q(\partial)(A_0+A_1\,|x|^2+\dotsb+
A_{2N-1}\,|x|^{4N-2})(x)\,\chi_B(x)\,.
\end{equation}
Contrary to what happened in the previous section, the term
$$
S(x) : = - Q(\partial)(A_0+A_1\,|x|^2+\dotsb+
A_{2N-1}\,|x|^{4N-2})(x)
$$
does not necessarily vanish, the reason being that now $Q$ does not
need to be harmonic.

Our goal is to find a function $\beta \in L^\infty(\Rn)$, satisfying
the decay estimate
\begin{equation}\label{eq15}
|\beta(x)| \leq \frac{C}{|x|^{n+1}},\quad |x|\geq 2\,,
\end{equation}
and
\begin{equation}\label{eq16}
S(x) \chi_B(x) = T(\beta)(x)\,.
\end{equation}
Once this is achieved the proof of $(i)$ is just a variation of the
argument presented in section 2, which we now explain. By
\eqref{eq14}, the definition of $S(x)$ and \eqref{eq16}, we get
\begin{equation}\label{eq17}
K(x)\chi_{\BC}(x)=  T(b)(x)+ T(\beta)(x)\,.
\end{equation}
Set $ \gamma = b+ \beta$. We show \eqref{eq9} by arguing as follows.
For $f$ in any $ L^p(\Rn)$, $1\leq p <\infty$, we have
\begin{equation*}
\begin{split}
T^1 f(0) & = \int \chi_{\BC}(y)\,K(y)\, f(y) \,dy \\
 & = \int T(\gamma)(y)\,f(y)\,dy \\
 & = \int \gamma(y)\, Tf(y)\,dy \\
 &= \int_{2B} \gamma(y) \, Tf(y)\,dy + \int_{\Rn \setminus 2B} \gamma(y) \,
 Tf(y)\,dy
\end{split}
\end{equation*}
and thus, by the decay inequality \eqref{eq15} with $\beta$ replaced
by $\gamma$,
\begin{equation*}
\begin{split}
|T^1 f(0)| & \leq C  \left( \|\gamma\|_\infty
\frac{1}{|2B|}\int_{2B}|Tf(y)|\,dy
 + \int_{\Rn \setminus 2B} \frac{|Tf(y)|}{|y|^{n+1}}\,dy \right)\\*[5pt]
 & \leq C\,M(Tf)(0)\,.
\end{split}
\end{equation*}

To construct $\beta$ satisfying  \eqref{eq15} and \eqref{eq16} we
resort to our hypothesis, condition~$(iii)$ in the Theorem, which
says that $T = R \circ U$, where $R$ is a higher order Riesz
transform, $U$ is an invertible operator in the algebra $A$ and the
polynomial $P$ which determines $R$ divides $P_{2j}$, $1\leq j \leq
N $, in the ring of polynomials in $n$~variables with real
coefficients. The construction of $\beta$ is performed in two steps.

The first step consists in proving that there exists a
function~$\beta_1$ in $L^\infty(B)$, satisfying a Lipschitz
condition of order~$1$ on $B$,  $\int \beta_1(x)\,dx =0$ and such
that
\begin{equation}\label{eq18}
S(x) \chi_B(x) = R(\beta_1)(x)\,.
\end{equation}
It will become clear later on how the Lipschitz condition on
$\beta_1$ is used. To prove \eqref{eq18} we need an explicit formula
for $S(x)$ and for that we will make use of the following formula of
Lyons and Zumbrun \cite{LZ}.

\begin{lemma} \label{L2}
Let $L$ be a homogeneous polynomial of degree $l$ and let $f$ be a
smooth function of one variable. Then
$$
L(\partial)f(r)= \sum_{\nu \geq 0} \frac{1}{2^\nu\,\nu !}\,
\Delta^\nu L(x)\left(\frac{1}{r}\frac{\partial}{\partial
r}\right)^{l-\nu} f(r), \quad r=|x|\,.
$$
\end{lemma}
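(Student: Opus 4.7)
My approach is induction on the degree $l$ of $L$, with a linearity reduction to monomials. Both sides of the claimed formula are linear in $L$ and in $f$, and the sum on the right is finite since $\Delta^\nu L = 0$ once $2\nu > l$. The cases $l=0,1$ are direct from the chain rule: for $L = x_j$, one simply has $\partial_j f(|x|) = (x_j/r)\, f'(r) = x_j\, D f(r)$ with $D := \frac{1}{r}\frac{\partial}{\partial r}$, and only the $\nu = 0$ term contributes on the right since $\Delta x_j = 0$.

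For the inductive step, choose any $j$ with $x_j$ dividing the monomial $L$, and write $L = x_j\, \tilde L$ with $\deg \tilde L = l-1$, so that $L(\partial) = \partial_j\, \tilde L(\partial)$. The inductive hypothesis gives
\[
\tilde L(\partial) f(|x|) = \sum_{\nu \geq 0} \frac{1}{2^\nu\, \nu!}\, (\Delta^\nu \tilde L)(x)\, D^{l-1-\nu} f(r).
\]
Applying $\partial_j$ and using the two elementary relations $\partial_j\bigl[p(x)\, g(|x|)\bigr] = (\partial_j p)(x)\, g(|x|) + p(x)\,(x_j/r)\, g'(r)$ and $(D^k f)'(r) = r\, D^{k+1} f(r)$, I collect the result as a linear combination of the $D^{l-\mu} f(r)$. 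The coefficient of $D^{l-\mu}f(r)$ picks up contributions at the index $\nu = \mu-1$ (when $\partial_j$ falls on the polynomial factor) and at $\nu = \mu$ (when $\partial_j$ falls on the radial factor).

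Matching the resulting expression against the claimed coefficient $(2^\mu\, \mu!)^{-1}\, \Delta^\mu L$ reduces the whole statement to the algebraic identity
\[
\Delta^\mu(x_j \tilde L) \;=\; 2\mu\, \partial_j(\Delta^{\mu-1}\tilde L) \;+\; x_j\, \Delta^\mu \tilde L,
\]
which I would dispatch as a short auxiliary lemma by induction on $\mu$, starting from the familiar Leibniz relation $\Delta(x_j g) = 2\partial_j g + x_j \Delta g$ and iterating.

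The main obstacle is the bookkeeping: two contributions, indexed by $\nu = \mu-1$ and $\nu = \mu$, must combine with their exact binomial weights into $(2^\mu\, \mu!)^{-1}\,\Delta^\mu L$, and this matching is precisely what the Leibniz-type identity above encodes. Everything else is routine calculus. As an alternative (arguably slicker) route, one may test the identity on the Gaussian family $f_s(r) = e^{sr^2/2}$, for which $D^k f_s = s^k f_s$ and whose $l$-jets at any fixed $r > 0$ span polynomials in $r$ of degree $\leq l$ by a Vandermonde argument. Using the conjugation $e^{-s|x|^2/2}\, \partial_j\, e^{s|x|^2/2} = \partial_j + s\, x_j$, the lemma then reduces to the purely algebraic identity $L(\partial + sx)\cdot 1 = \sum_\nu (2^\nu \nu!)^{-1}\, s^{l-\nu}\, (\Delta^\nu L)(x)$, which follows from the Baker-Campbell-Hausdorff relation $e^{\xi\cdot(\partial+x)} = e^{\xi\cdot x}\, e^{\xi\cdot\partial}\, e^{-|\xi|^2/2}$ (valid because $[\partial_i, x_j] = \delta_{ij}$ is central) applied to the constant $1$ and matched in Taylor coefficients in $\xi$.
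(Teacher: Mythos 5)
The paper contains no proof of this lemma to compare against: the formula is simply quoted from Lyons and Zumbrun \cite{LZ}, so any correct argument is necessarily ``different from the paper''. Your main route is correct and self-contained. The reduction to monomials by linearity is legitimate, the base cases are right, and after writing $L=x_j\tilde L$ the two elementary facts $\partial_j\bigl[p(x)g(|x|)\bigr]=(\partial_j p)(x)g(|x|)+p(x)\,(x_j/r)\,g'(r)$ and $(D^kf)'(r)=r\,D^{k+1}f(r)$ do collect the inductive step into the single identity $\Delta^\mu(x_j\tilde L)=2\mu\,\partial_j\Delta^{\mu-1}\tilde L+x_j\Delta^\mu\tilde L$, i.e.\ the commutator relation $[\Delta^\mu,x_j]=2\mu\,\partial_j\Delta^{\mu-1}$, which follows by the induction on $\mu$ you indicate from $\Delta(x_jg)=2\partial_jg+x_j\Delta g$. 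No linear-independence argument is needed since you are just rewriting the sum, and the sum is finite because $\Delta^\nu L=0$ for $2\nu>l$; so the proof is complete modulo that short auxiliary lemma, which you state correctly. One caveat concerns your optional Gaussian/BCH route: the displayed BCH relation has the wrong sign for the ordering you chose. Since $[\xi\cdot x,\xi\cdot\partial]=-|\xi|^2$, one has $e^{\xi\cdot(\partial+x)}=e^{\xi\cdot x}\,e^{\xi\cdot\partial}\,e^{+|\xi|^2/2}$, while the factor $e^{-|\xi|^2/2}$ belongs to the opposite ordering $e^{\xi\cdot\partial}\,e^{\xi\cdot x}$. As written, applying your relation to $1$ would produce $\sum_\nu(-1)^\nu(2^\nu\nu!)^{-1}s^{l-\nu}\Delta^\nu L(x)$, contradicting the formula you are trying to prove; with the corrected sign the route does work (one gets $e^{s\xi\cdot x+s|\xi|^2/2}$, and extracting the degree-$l$ part via $L(\partial_\xi)$ at $\xi=0$ gives exactly $\sum_\nu(2^\nu\nu!)^{-1}s^{l-\nu}\Delta^\nu L(x)$). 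Since this is only an alternative to an already complete induction, the slip does not affect the validity of the proposal.
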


An immediate consequence of Lemma~3 is

\begin{lemma}\label{L3}
Let $P_{2j}$ a homogeneous harmonic polynomial of degree $2j$ and
let $k$ be a non-negative integer. Then
$$
P_{2j}(\partial)(|x|^{2k}) = 2^{2j}\frac{k!}{(k-2j)!}
\,P_{2j}(x)\,|x|^{2(k-2j)}\quad \text{if }\, 2j\leq k\,,
$$
and
$$
P_{2j}(\partial)(|x|^{2k}) = 0,\quad \text{if }\, 2j > k\,.
$$
\end{lemma}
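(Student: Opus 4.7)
The plan is to apply Lemma~\ref{L2} (the Lyons--Zumbrun identity) directly, with $L = P_{2j}$ (so $l = 2j$) and with the one-variable function $f(r) = r^{2k}$. Because $P_{2j}$ is a homogeneous \emph{harmonic} polynomial, $\Delta P_{2j} \equiv 0$ and therefore $\Delta^\nu P_{2j} \equiv 0$ for every $\nu \geq 1$. Consequently only the term $\nu = 0$ survives in the Lyons--Zumbrun sum, giving
$$
P_{2j}(\partial)(|x|^{2k}) \;=\; P_{2j}(x)\,\left(\frac{1}{r}\frac{\partial}{\partial r}\right)^{2j}\! r^{2k}.
$$

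What remains is the purely one-dimensional computation of the iterated operator $(r^{-1}\partial_r)^{2j}$ applied to $r^{2k}$. A direct check yields $r^{-1}\partial_r(r^{2m}) = 2m\, r^{2m-2}$ for any non-negative integer $m$, so by a straightforward induction on the number of iterations I expect
$$
\left(\frac{1}{r}\frac{\partial}{\partial r}\right)^{\ell} r^{2k} \;=\; 2^{\ell}\,k(k-1)\cdots(k-\ell+1)\,r^{2k-2\ell}.
$$
Specializing to $\ell = 2j$ and rewriting the falling factorial as $k!/(k-2j)!$ gives the first identity of the lemma whenever $k \geq 2j$. When $2j > k$, one of the factors in the product above vanishes (precisely when $\ell = k+1$), so the iterates are identically zero from that step onwards, which delivers the second identity.

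Overall this is a bookkeeping argument once Lemma~\ref{L2} is available, and I anticipate no real obstacle. The function $r\mapsto r^{2k}$ is a polynomial in $r^2$, hence smooth on $[0,\infty)$, so the smoothness hypothesis in Lemma~\ref{L2} is satisfied and no issue arises at the origin. The key structural point is that the harmonicity of $P_{2j}$ is what collapses the Lyons--Zumbrun sum to a single term; without it one would have to track the contributions of all the iterated Laplacians $\Delta^\nu L$, which is precisely the more delicate situation arising when $\Omega$ is a general polynomial combination of spherical harmonics and which the body of Section~3 is devoted to handling.
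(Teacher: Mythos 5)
Your proof is correct and follows exactly the route the paper intends: the paper states the lemma as an ``immediate consequence'' of the Lyons--Zumbrun formula, with harmonicity of $P_{2j}$ killing all terms $\nu\geq 1$ and the remaining one-dimensional computation of $\bigl(\tfrac{1}{r}\tfrac{\partial}{\partial r}\bigr)^{2j} r^{2k}$ giving the stated constants and the vanishing when $2j>k$. Nothing to add.
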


On the other hand, a routine computation gives
\begin{equation}\label{eq19}
\triangle^{j}(|x|^{2k}) = 4^j\frac{j! \,k!
}{(k-j)!}\binom{\frac{n}{2}+k-1}{j} \,|x|^{2(k-j)},\quad k \geq j\,,
\end{equation}
and
\begin{equation}\label{eq20}
\triangle^{j}(|x|^{2k}) = 0 ,\quad k < j \,.
\end{equation}
By Lemma 4, \eqref{eq19} and \eqref{eq20} we get that for some
constants $c_{jk}$ one has, in view of the definitions of $Q(x)$ and
$S(x)$,
\begin{equation}\label{eq21}
S(x)= \sum_{j=1}^{N-1} \sum_{k=j}^{N-1}
c_{jk}\,P_{2j}(x)\,|x|^{2(k-j)}\,.
\end{equation}
Therefore it suffices to prove \eqref{eq18} with $S(x)$ replaced by
$P_{2j}(x)\,|x|^{2k}$, for $1 \leq j \leq N$ and each non-negative
integer~$k$. The idea is to look for an appropriate function~$\psi$
such that
\begin{equation}\label{eq22}
P(\partial)\psi (x) = P_{2j}(x)\,|x|^{2k}\,\chi_B(x)\,.
\end{equation}
Indeed, if \eqref{eq22} holds and $2d$ is the degree of $P$, then
$$
\psi = E \star \Delta^d \psi\,,
$$
provided $E$~is the fundamental solution of $\triangle^d$ and $\psi$
is good
 enough. Hence
 $$
P(\partial)\psi = P(\partial)E \star \Delta^d \psi = c\,
P.V.\,\frac{P(x)}{|x|^{n+2d}} \star \Delta^d \psi= R(\beta_1)\,,
$$
if $\beta_1 = c\, \Delta^d \psi$. The conclusion is that we have to
solve \eqref{eq22} in such a way that $\Delta^d \psi$~is supported
on $B$, is a Lipschitz function on $B$ and has zero integral.

Taking Fourier transforms in \eqref{eq22} we get
\begin{equation}\label{eq23}
(-1)^d P(\xi)\, \widehat{\psi} (\xi) =
(-1)^{j+k}\,P_{2j}(\partial)\,\triangle^k
\left(\widehat{\chi_B}(\xi)\right)\,.
\end{equation}
Recall that for $m=n/ 2$ one has \cite[A-10]{Gr}
$$
\widehat{\chi_B}(\xi)= \frac{J_m(\xi)}{|\xi|^m},\quad \xi \in \Rn\,,
$$
where $J_m$ is the Bessel function of order~$m$. Set
$$
G_\lambda(\xi)= \frac{J_\lambda(\xi)}{|\xi|^\lambda},\quad \xi \in
\Rn, \quad \lambda>0\,.
$$
In computing the right hand side of \eqref{eq23} we apply Lemma~3 to
$L(x)= P_{2j}(x) \,|x|^{2k}$ and $f(r)= G_m(r)$ and we get
$$
P(\xi)\, \widehat{\psi} (\xi) = (-1)^{j+k+d} \sum_{\nu \geq 0}
\frac{(-1)^\nu}{2^\nu\,\nu !}\,\triangle^\nu
\left(P_{2j}(\xi)\,|\xi|^{2k}\right)\,G_{m+2j+2k-\nu}(\xi)\,,
$$
owing to the well known formula, e.g.~\cite[A-6]{Gr},
$$
\frac{1}{r}\,\frac{d}{dr}\,G_{\lambda}(r) = -
G_{\lambda+1}(r)\,,\quad r>0,\quad \lambda>0\,.
$$
Since $P_{2j}(\xi)$ is homogeneous of degree $2j$, $\nabla
P_{2j}(\xi)\cdot \xi = 2j\,P_{2j}(\xi)$, and hence one may readily
show by an inductive argument that
$$
 \triangle^\nu \left(P_{2j}(\xi)\,|\xi|^{2k}\right)=
 a_{jk\nu}\,P_{2j}(\xi)\,|\xi|^{2(k-\nu)}\,,
$$
for some constants $a_{jk\nu}$. Thus, for some other constants $
a_{jk\nu}$, we get
\begin{equation}\label{eq24}
P(\xi)\, \widehat{\psi} (\xi) = \sum_{\nu \geq 0}
a_{jk\nu}\,P_{2j}(\xi)\,|\xi|^{2(k-\nu)}\,G_{m+2j+2k-\nu}(\xi)\,.
\end{equation}
By hypothesis $P$ divides $P_{2j}$ in the ring of polynomials in $n$
variables and so
$$
P_{2j}(\xi)= P(\xi)\,Q_{2j-2d}(\xi)\,,
$$
for some homogeneous polynomial $Q_{2j-2d}$ of degree $2j-2d$.
Cancelling out the factor $P(\xi)$ in \eqref{eq24} we conclude that
$$
 \widehat{\psi} (\xi) = Q_{2j-2d}(\xi)\sum_{\nu = 0}^k
a_{jk\nu}\,|\xi|^{2(k-\nu)}\,G_{m+2j+2k-\nu}(\xi)\,.
$$
Since \cite[A-10]{Gr}
$$
\widehat{\left((1-|x|^2)^\lambda \,\chi_B(x)\right)}(\xi) =
c_\lambda\,
 G_{m+\lambda}(\xi)\,,
$$
we finally obtain
$$
 \psi(x) = Q_{2j-2d}(\partial)\sum_{\nu = 0}^k
a_{jk\nu}\,\triangle^{k-\nu}\left( (1-|x|^2)
^{2j+2k-\nu}\,\chi_B(x)\right)\,.
$$
Observe that $\psi$ restricted to $B$ is a polynomial which vanishes
 on $\partial B$ up to order~$2d$ and $\psi$ is zero off $B$. Therefore $
\triangle^d \psi$ is supported on~$B$ and its restriction to~$B$ is
a polynomial with zero integral. This completes the first step of
the construction of~$\beta$.

The second step proceeds as follows. Since by hypothesis $T=R\circ U
$, with $U$~invertible in the algebra~$A$, we have
$$
R(\beta_1)=  T(U^{-1}\beta_1)\,.
$$
Setting
\begin{equation}\label{eq24bis}
\beta=U^{-1}\beta_1\,,
\end{equation}
we are only left with the task of showing that
\begin{equation}\label{eq25}
\beta \in L^\infty(\Rn)
\end{equation}
and that, for some positive constant $C$,
\begin{equation}\label{eq26}
 | \beta (x)|  \leq \frac{C}{|x|^{n+1}},\quad |x| \geq 2\,.
\end{equation}
Since $U^{-1} \in A$\,, for some real number $\lambda$ and some
smooth homogeneous Calder\'{o}n-Zygmund operator $V$,
$$
U^{-1} = \lambda\,I + V \,.
$$
Thus
$$
\beta = \lambda\,\beta_1 + V (\beta_1)\,.
$$
Now $\beta_1$ is supported on $B$ and has zero integral on $B$ and
this is enough to insure the decay estimate \eqref{eq26}. Indeed,
let $L(x)$ be the kernel of $V$ and assume that $|x| \geq 2$. Then
\begin{equation}\label{eq27}
\begin{split}
V(\beta_1)(x) &=  \int L(x-y) \,\beta_1(y)\,dy \\
              &=  \int \left(L(x-y) - L(x)
              \right)\,\beta_1(y)\,dy\,,
\end{split}
\end{equation}
and so
\begin{equation}\label{eq28}
\begin{split}
|V(\beta_1)(x)| &  \leq \int |\left(L(x-y) - L(x)
              \right)|\,|\beta_1(y)|\,dy,\\
                & \leq C\, \int  \frac{|y|}{|x|^{n+1}} \,|\beta_1(y)|\,dy,\\
                & = \frac{C}{|x|^{n+1}}\,.
\end{split}
\end{equation}

The boundedness of $\beta$ is a more delicate issue. It follows
immediately from the next lemma applied to the operator~$V$ and the
function $\beta_1$. Is precisely here where we use the fact that
$\beta_1$ satisfies a Lipschitz condition.

The constant of the kernel $K(x)= \Omega(x)/|x|^n$ of the smooth
homogeneous Cal\-de\-r\'{o}n-Zygmund operator $T$ is
\begin{equation}\label{eq29}
\|T\|_{CZ} \equiv \|K\|_{CZ}= \|\Omega \|_\infty+ \| |x|\, \nabla
\Omega(x) \|_\infty\,.
\end{equation}

We adopt the standard notation for the minimal Lipschitz constant of
a Lipschitz function~$f$ on~$B$, namely
$$
\|f\|_{\operatorname{Lip}(1,B)} = \sup\left \{
\frac{|f(x)-f(y)|}{|x-y|}: x,y \in B, \, x \neq y\right \} <
\infty\,.
$$

\begin{lemma}\label{Lip}
Let $T$ be the homogeneous singular integral operator with kernel
$K(x)= \frac{\Omega(x)}{|x|^n}$, where $\Omega$ is an even
homogeneous function of degree $0$, continuously differentiable and
with zero integral on the unit sphere. Then
$$
\|T(f\,\chi_B)\|_{L^\infty(\Rn)} \leq C \, \|K\|_{CZ} \left(
\|f\|_{L^\infty(B)} + \|f\|_{\operatorname{Lip}(1,B)}\right)\,,
$$
where $C$ is a positive constant which depends only on $n$.
\end{lemma}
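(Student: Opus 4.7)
My plan is to establish the bound pointwise: fix $x_0 \in \Rn$ and estimate $|T(f\chi_B)(x_0)|$ uniformly. When $|x_0| \geq 4$ the integrand is non-singular on $B$, since $|x_0-y| \geq |x_0|/2 \geq 2$ for $y \in B$, and a crude pointwise estimate gives $|T(f\chi_B)(x_0)| \leq C\|\Omega\|_\infty \|f\|_{L^\infty(B)}$. I therefore focus on $x_0 \in 4B$ for the rest of the argument.

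Let $x_0^* \in \overline{B}$ be the closest point of $\overline{B}$ to $x_0$: so $x_0^* = x_0$ when $x_0 \in \overline{B}$, and $x_0^* = x_0/|x_0|$ otherwise. Split
\[
T(f\chi_B)(x_0) \;=\; T\!\bigl((f - f(x_0^*))\chi_B\bigr)(x_0) \;+\; f(x_0^*)\, T(\chi_B)(x_0),
\]
where $f(x_0^*)$ is understood through the continuous extension of $f$ to $\overline{B}$ (so $|f(x_0^*)| \leq \|f\|_{L^\infty(B)}$). The first summand is controlled by the Lipschitz hypothesis combined with the elementary geometric inequality $|y - x_0^*| \leq 2|y - x_0|$ for $y \in \overline{B}$, valid because $x_0^*$ minimises the distance from $x_0$ to $\overline{B}$. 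This dominates the integrand by $2\|\Omega\|_\infty \|f\|_{\operatorname{Lip}(1,B)} |x_0 - y|^{-(n-1)}$, which is locally integrable, so the integral converges absolutely (no principal value is needed) and is bounded by $C\|\Omega\|_\infty \|f\|_{\operatorname{Lip}(1,B)}$ uniformly for $x_0 \in 4B$.

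The crux is therefore the uniform estimate
\[
\|T(\chi_B)\|_{L^\infty(\Rn)} \leq C\,\|K\|_{CZ},
\]
which genuinely uses the evenness of $\Omega$; the analogue fails for odd Calder\'{o}n-Zygmund operators such as the Riesz transforms. I intend to derive it by expanding $\Omega = \sum_{j \geq 1} P_{2j}$ into even spherical harmonics and treating each summand through the representation $T_{2j}(\chi_B) = c_j\, P_{2j}(\partial)(E_j \star \chi_B)$, where $E_j$ is the fundamental solution of $\Delta^j$. Since $\chi_B$ and $E_j$ are radial, $E_j \star \chi_B$ is an explicit radial function, polynomial in $|x|^2$ on $\overline{B}$ and of the form $c|x|^{2j-n}$ plus lower-order corrections on $\BC$. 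The harmonicity of $P_{2j}$ collapses the formula of Lyons and Zumbrun (Lemma~\ref{L2}) to the single term $P_{2j}(x)\,(r^{-1}\partial_r)^{2j} v(r)$, whose $L^\infty$ norm is controlled by $\|P_{2j}\|_{L^\infty(S^{n-1})}$. Summing over $j$, with a standard approximation argument if $\Omega$ is only $C^1$, completes the estimate. The hard part is precisely this last step: only the cancellation coming from evenness prevents the naive logarithmic blow-up of $T(\chi_B)$ as $x_0 \to \partial B$, a phenomenon already visible in the classical identity $B(\chi_D) \equiv 0$ inside and $B(\chi_D)(z) = 1/z^2$ outside the unit disc for the Beurling transform.
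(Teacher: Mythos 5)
Your reduction is fine, and in fact it streamlines the first half of the paper's own argument: splitting off $f(x_0^*)$ at the nearest point of $\overline B$ and using $|y-x_0^*|\le 2|y-x_0|$ handles the Lipschitz part at all points at once (the paper does this only at boundary points $a$ and then transfers to interior/exterior points by a kernel-smoothness comparison). But after this step the entire content of the lemma is concentrated in the claim $\|T(\chi_B)\|_{L^\infty(\Rn)}\le C\,\|K\|_{CZ}$, which is just the case $f\equiv 1$ of the statement, and here your plan has a genuine gap. The route through the spherical harmonics expansion $\Omega=\sum_j P_{2j}$, explicit radial computation of $E_j\star\chi_B$ and Lyons--Zumbrun, even if each term could be bounded, ends with a sum of the form $\sum_j C_j\|P_{2j}\|_{L^\infty(S^{n-1})}$. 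Under the hypothesis actually made -- $\Omega$ merely $C^1(S^{n-1})$, with constant required to be $C\|K\|_{CZ}=C(\|\Omega\|_\infty+\||x|\nabla\Omega\|_\infty)$ -- this sum is not controlled: absolute convergence of sup norms of spherical harmonics needs roughly $(n-1)/2$ derivatives beyond $L^2$, so for $n\ge 3$ a $C^1$ function need not have $\sum_j\|P_{2j}\|_\infty<\infty$, and no ``standard approximation argument'' recovers a bound by $\|K\|_{CZ}$ from estimates involving higher derivative norms. Moreover you have not established that the term-by-term constants $C_j$ are uniform in $j$; this matters later in the paper, where the lemma is applied to kernels whose expansions reach degree $2N$ and all constants must be tracked.

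The missing idea is a direct cancellation argument that uses evenness without any expansion. Evaluate the truncated integral at a boundary point $a$, pass to polar coordinates centered at $a$, and write the dangerous term as $\int_\epsilon^{1/2}\bigl(\int_{A(r)}\Omega\,d\sigma\bigr)\frac{dr}{r}$ with $A(r)=\{\omega: |a+r\omega|<1\}$. The set $A(r)$ is contained in the half-sphere $S\cap V$ cut off by the tangent hyperplane at $a$, the integral of the even function $\Omega$ over that half-sphere vanishes, and $\sigma((S\cap V)\setminus A(r))\le Cr$, so the inner integral is $O(\|\Omega\|_\infty\,r)$ and the $dr/r$ singularity disappears; this gives the bound at boundary points with constant $C\|\Omega\|_\infty$. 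For points off $\partial B$ one compares with the nearest boundary point using the gradient part of $\|K\|_{CZ}$ (this is where the full Calder\'on--Zygmund constant, and not just $\|\Omega\|_\infty$, enters). Without an argument of this kind -- or some other proof of the key bound on $T(\chi_B)$ valid for $C^1$ even $\Omega$ with constant $C\|K\|_{CZ}$ -- the proposal does not prove the lemma.
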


\begin{proof}
We start by examining the behaviour of $T(f\,\chi_B)$ on the unit
sphere. We claim that
$$
|T_\epsilon(f\,\chi_B)(a)| \leq C\, \|K\|_{CZ} \left(
\|f\|_{L^\infty(B)} + \|f\|_{\operatorname{Lip}(1,B)}\right),\quad
|a|=1,\quad \epsilon>0\,.
$$
Indeed, if one follows in detail the proof of the claim, which we
discuss below, one will realize that the principal value integral
$T(f\,\chi_B)(a)$ exists for all $a$ in the sphere and satisfies the
desired estimate.

We have
\begin{equation*}
\begin{split}
T_\epsilon(f\,\chi_B)(a)&= \int_{\epsilon < |x-a|< 1/2}
\chi_B(x)\,f(x)\, K(a-x)\,dx + \int_{1/2 < |x-a|} \dotsi\\*[5pt] &=
I_\epsilon + II \,.
\end{split}
\end{equation*}
Clearly,
$$
|II| \leq  \int_{1/2 < |x-a|} \chi_B(x)\,|f(x)|\,
\frac{|\Omega(x-a)|}{|x-a|^n}\,dx \leq 2^n\,|B|\,\|\Omega\|_\infty\,
\|f\|_{L^\infty(B)} \,.
$$
To deal with the term $I_\epsilon$ we write
\begin{equation*}
\begin{split}
I_\epsilon &= \int_{\epsilon < |x-a|< 1/2} \chi_B(x)\,(f(x)-f(a))\,
K(a-x)\,dx\\*[5pt] &\quad+ f(a)\int_{\epsilon < |x-a|< 1/2}
\chi_B(x)\, K(a-x)\,dx \\*[5pt] & = III_\epsilon+ f(a)\,IV_\epsilon
\,,
\end{split}
\end{equation*}
and we remark that $III_\epsilon$ can easily be estimated as follows
$$
|III_\epsilon| \leq \|f\|_{\operatorname{Lip}(1,B)}\,\int_B |x-a|
|K(a-x)|\,dx \leq C\,\|\Omega\|_\infty\,\|f
\|_{\operatorname{Lip}(1,B)}\,.
$$
Taking care of $IV_\epsilon$ is not so easy. Take spherical
coordinates centered at the point $a$, $x=a+r\, \omega$ with $0 \leq
r$ and $|\omega|=1$. Then
\begin{equation}\label{eq30}
IV_\epsilon  =  \int_\epsilon^{1/2} \left( \int_{A(r)}
\Omega(\omega)\,d\sigma(\omega)\right)\frac{dr}{r} \,,
\end{equation}
where
$$
A(r)= \{ \omega : |\omega| =1 \text{ and }|a+r \omega| < 1 \}\,.
$$
Let $H$ be the tangent hiperplane to $S= \{\omega : |\omega|=1\}$ at
the point~$a$. Call $V$ the half space with boundary $H$ containing
the origin. Clearly $A(r) \subset S \cap V$. Since $\Omega$~is even,
$$
0= \int_S \Omega(\omega)\,d\sigma(\omega) = 2\, \int_{S \cap V}
\Omega(\omega)\,d\sigma(\omega)\,.
$$
Thus
$$
\int_{A(r)} \Omega(\omega)\,d\sigma(\omega) = - \int_{(S \cap V)
\setminus A(r)} \Omega(\omega)\,d\sigma(\omega)\,,
$$
and so
$$
\left|\int_{A(r)} \Omega(\omega)\,d\sigma(\omega)\right| \leq \|
\Omega\|_\infty \, \sigma((S \cap V) \setminus A(r))\,.
$$
Since $H$ is tangent to $S$ at the point $a$, we obtain
\begin{equation*}
\sigma((S \cap V) \setminus A(r)) \leq C\,r\,,
\end{equation*}
which yields, by \eqref{eq30},

\begin{equation*}
|IV_\epsilon| \leq C\,\|\Omega \|_\infty\,.
\end{equation*}

\vspace*{3pt}

\begin{center}
\begin{figure}[ht]
\epsfig{file=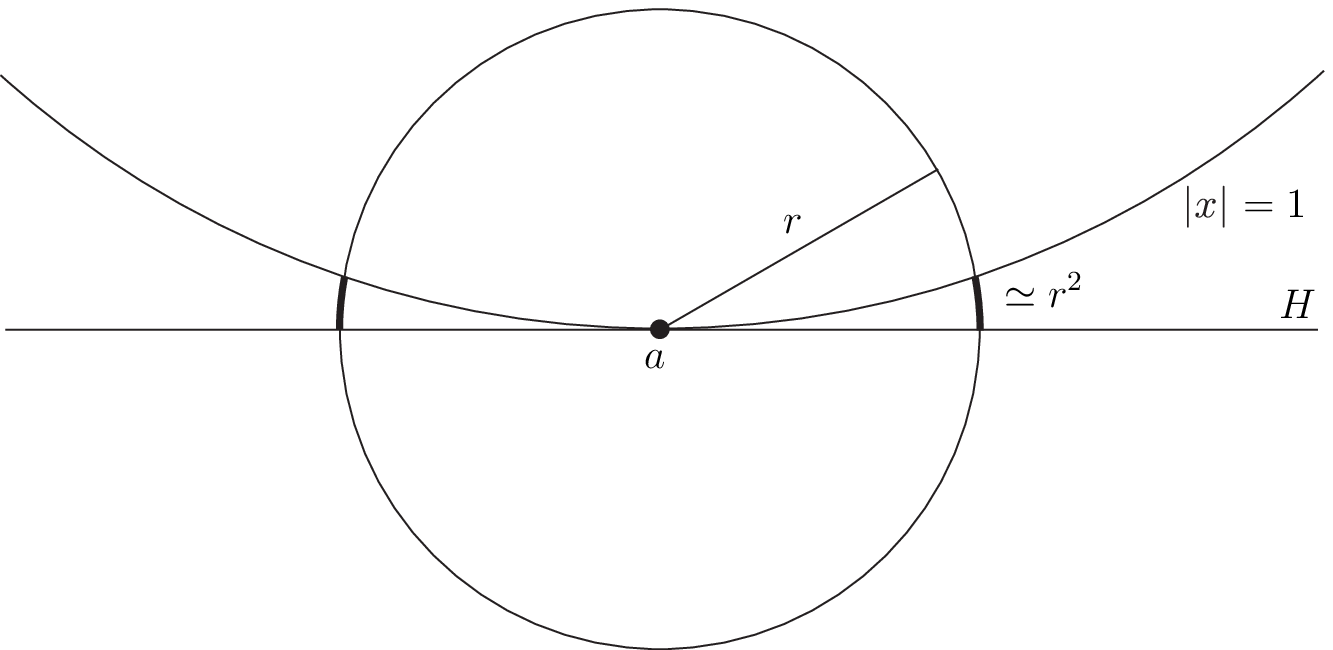} \label{lhoritzo}
\end{figure}
\end{center}

Assume now that $|a| < 1$. Proceeding as before we estimate in the
same way the terms $II$ and $III_\epsilon$, so that we are again
left with $IV_\epsilon$. Let $\epsilon_0$ stand for the distance
from $a$ to the boundary of~$B$. In estimating $IV_\epsilon$ we can
assume, without loss of generality, that $\epsilon_0 \leq 1/4$. .Set
$a_0 = a/|a|$,
$$
A= \{x \in B : \epsilon_0 < |x-a| < 1/2 \}
$$
and
$$
A_0= \{x \in B : \epsilon_0 < |x-a_0| < 1/2 \}\,.
$$
We compare $IV_\epsilon$ to the expression we get replacing $a$ by
$a_0$ and $\epsilon$ by $\epsilon_0$ in the definition of
$IV_\epsilon$. For $\epsilon \leq \epsilon_0$ we have
$$
\int_{\epsilon < |x-a|< 1/2} \chi_B(x)\, K(a-x)\,dx =
\int_{\epsilon_0 < |x-a|< 1/2} \chi_B(x)\, K(a-x)\,dx
$$
and then
\begin{equation*}
\begin{split}
\biggl|\int_{\epsilon < |x-a|< 1/2} &\chi_B(x)\, K(a-x)\,dx -
\int_{\epsilon_0 < |x-a_0|< 1/2} \chi_B(x)\, K(a_0-x)\,dx \biggr|
\\*[7pt] &= \left|\int_{A}  K(a-x)\,dx - \int_{A_0} K(a_0-x)\,dx
\right|  \\*[7pt] &\leq \int_{A\cap A_0}
|K(a-x)-K(a_{0}-x)|\,dx\\*[7pt] &\quad+ \left|\int_{A \setminus A_0}
\!\chi_B(x)\, K(a-x)\,dx \right| +
 \left| \int_{A_0 \setminus A} \chi_B(x)\, K(a_0-x)\,dx  \right|  \\*[7pt]
&= J_1+J_2+J_3\,.
\end{split}
\end{equation*}
If $x \in A \cap A_0$, then
$$
|K(a-x)-K(a_0-x)| \leq C \,\|K\|_{CZ}\,
\frac{|a-a_0|}{|x-a|^{n+1}}\,.
$$
Hence
$$
J_1 \leq C \,\|K\|_{CZ}\, |a-a_0|\, \int_{|x-a|> \epsilon_0}
\frac{dx}{|x-a|^{n+1}} \leq C\,\|K\|_{CZ}\,.
$$
To estimate $J_2$ observe that
$$
A \setminus A_0 = \left(A \cap B(a_0,\epsilon_0)\right) \cup \left(A
\cap (\Rn \setminus B(a_0,1/2))\right)\,.
$$
Now, it is obvious that if $|x-a_0|\geq 1/2$, then $|x-a| \geq 1/4$,
and so
$$
J_2 \leq \|\Omega\|_\infty \left( \int_{|x-a_0|< \epsilon_0}
\frac{dx}{\epsilon_0^n} + \int_B 4^n \,dx \right) \leq
C\,\|\Omega\|_\infty \,.
$$
A similar argument does the job for $J_3$.

The case $|a|>1$ is treated in a completely analogous way.
\end{proof}

The construction of $\beta$ is then completed and the Theorem is
proved for polynomial operators.

We remark that a variant of Lemma~\ref{Lip} holds, with the same
proof,  replacing $B$ by $\BC$. To control the term $II$ we have to
assume, in addition to the hypothesis of Lemma~\ref{Lip}, that
 $f$ satisfies a decay inequality of the type
$$
|f(x)| \le \frac{\|f \|_{L^\infty(\BC)}}{|x|^\eta}, \quad |x| \geq
1\,.
$$
Then we conclude that
$$
\|T(f\,\chi_{\BC})\|_{L^\infty(\Rn)} \leq C \, \|K\|_{CZ} \left(
\|f\|_{L^\infty(\BC)} + \|f\|_{\operatorname{Lip}(1,\BC)}\right)\,,
$$
where $C$ depends on $n$ and $\eta$. We will use later on this
variant of Lemma~\ref{Lip} with $f(x)= K(x)$ on $\BC$, so that
$\eta=n$ and the constant $C$ will depend only on $n$.

We mention another straightforward extension of Lemma~\ref{Lip} that
will not be used in this paper. The function $f$ may be assumed to
be in $\operatorname{Lip}(\alpha,B)$\,\, $0<\alpha \leq 1$\,, and
the unit ball may be replaced by a domain with boundary of class
$C^{1+\epsilon}$\,.

After the paper was completed we learned from Stephen Semmes that
Lemma~\ref{Lip} is known in dimension $2$ \cite[p.52]{Ch} or
\cite[p.348]{BM} and that was used to prove global regularity of
vortex patches for incompressible perfect fluids.

\section{Proof of the sufficient condition: the general case}
We start this section by clarifying several facts about the
convergence of the series~\eqref{eq6}. Let us then assume that
$\Omega$ is a function in $C^\infty(S^{n-1})$ with zero integral.
Then $\Omega$ has an expansion~\eqref{eq6} in spherical harmonics.
For each positive integer~$r$, one has the identity \cite[p.~70]{St}
\begin{equation}\label{eq30bis}
 \sum_{j\geq 1} \left(j(j+n-2)\right)^r\,\|P_j\|^2_2 =
(-1)^r\int_{S^{n-1}} \triangle^r_S \Omega \,\Omega \,d\sigma \,,
\end{equation}
where $\Delta_S$ stands for the spherical Laplacean. Then
$$
\sum_{j\geq 1} \left(j(j+n-2)\right)^r\,\|P_j\|^2_{2} \leq  \|
\triangle^r_S \Omega \|_2 \,\|\Omega  \|_2 \,,
$$
where the $L^2$ norm is taken with respect to $d\sigma$. Thus, by
Schwarz's inequality, for each positive integer~$M$
\begin{equation}\label{eq31}
\sum_{j\geq 1} j^M \,\|P_j\|_{2} < \infty\,.
\end{equation}
We want to see that we also have
\begin{equation}\label{eq32}
\sum_{j\geq 1} j^M \,\|P_j\|_{\infty} < \infty\,,
\end{equation}
where the supremum norm is taken on $S^{n-1}$. This follows
immediately from the next lemma, whose proof was indicated to us by
Fulvio Ricci.

\begin{lemma}\label{Hom}
For all homogeneous polynomials of degree $q$
$$
\| Q\|_\infty \leq C\,q^{\frac{n-1}{2}}\,\|Q \|_2\,,
$$
where $C$ is a positive constant which depends only on $n$.
\end{lemma}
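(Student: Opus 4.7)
The plan is to decompose $Q$ into its spherical-harmonic components, bound each component pointwise via the reproducing kernel of the space of harmonics of that degree, and then sum via Cauchy--Schwarz. I will use only standard facts about spherical harmonics that are available in Stein.

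First, restrict $Q$ to $S^{n-1}$ and expand it in spherical harmonics. Because $Q$ is a homogeneous polynomial of degree $q$, only harmonics whose degree has the same parity as $q$ and is at most $q$ appear, so I can write
$$
Q(\xi) = \sum_{\substack{0 \le k \le q \\ k \equiv q \pmod 2}} Y_k(\xi), \qquad \xi \in S^{n-1},
$$
where $Y_k$ is a spherical harmonic of degree $k$ and the decomposition is orthogonal in $L^2(S^{n-1})$, so $\|Q\|_2^2 = \sum_k \|Y_k\|_2^2$.

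Next, for a fixed degree $k$, let $d_k = \dim \mathcal{H}_k$ be the dimension of the space of spherical harmonics of degree $k$, and pick an $L^2$-orthonormal basis $\{Y_{k,1},\dots,Y_{k,d_k}\}$. The addition formula (equivalently, the reproducing kernel identity for $\mathcal{H}_k$, see Stein, \emph{Singular Integrals}, Ch.~III) gives
$$
\sum_{j=1}^{d_k} |Y_{k,j}(\xi)|^2 = \frac{d_k}{\omega_{n-1}}, \qquad \xi \in S^{n-1},
$$
where $\omega_{n-1}$ is the surface measure of $S^{n-1}$. Writing $Y_k = \sum_{j} c_{k,j} Y_{k,j}$ and applying Cauchy--Schwarz pointwise, I get
$$
|Y_k(\xi)| \le \Bigl(\sum_j |c_{k,j}|^2\Bigr)^{1/2}\Bigl(\sum_j |Y_{k,j}(\xi)|^2\Bigr)^{1/2} = \sqrt{d_k/\omega_{n-1}}\;\|Y_k\|_2.
$$
Thus $\|Y_k\|_\infty \le C_n \sqrt{d_k}\,\|Y_k\|_2$.

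To finish, I use the well-known formula $d_k = \binom{k+n-1}{n-1} - \binom{k+n-3}{n-1}$, which yields $d_k \le C_n (1+k)^{n-2}$ for all $k \ge 0$. Applying this bound componentwise and then Cauchy--Schwarz in the index $k$,
$$
\|Q\|_\infty \le \sum_k \|Y_k\|_\infty \le C_n \sum_k \sqrt{d_k}\,\|Y_k\|_2 \le C_n \Bigl(\sum_{k \le q} d_k\Bigr)^{1/2}\Bigl(\sum_k \|Y_k\|_2^2\Bigr)^{1/2}.
$$
The first factor is controlled by
$$
\sum_{k=0}^{q} d_k \le C_n \sum_{k=0}^{q} (1+k)^{n-2} \le C_n'\, q^{n-1},
$$
and the second factor equals $\|Q\|_2$. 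Combining these,
$$
\|Q\|_\infty \le C_n\, q^{(n-1)/2}\, \|Q\|_2,
$$
which is the required estimate.

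There is no real obstacle here: the proof is essentially a bookkeeping argument combining the reproducing kernel bound for a single harmonic with the elementary count of the dimensions $d_k$. The only point to watch is that, since $Q$ is a polynomial and not just a function in $C^\infty(S^{n-1})$, the expansion is a finite sum and all interchanges of summation are trivially legal; this is what makes the power of $q$ in the bound explicit.
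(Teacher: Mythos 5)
Your argument is correct. It is a componentwise version of the paper's proof rather than a literally identical one: the paper takes an orthonormal basis of the full space of restrictions to $S^{n-1}$ of \emph{all} homogeneous polynomials of degree $q$, observes by rotation invariance that the sum of the squares of the basis elements is the constant $d=\binom{n+q-1}{q}\simeq q^{n-1}$, and concludes with a single Cauchy--Schwarz, so no spherical-harmonic decomposition, addition formula, or dimension count for each $\mathcal{H}_k$ is needed. You instead split $Q$ into its harmonic components $Y_k$, apply the addition formula (which is exactly the same rotation-invariance fact, specialized to $\mathcal{H}_k$) to get $\|Y_k\|_\infty\le C_n\sqrt{d_k}\,\|Y_k\|_2$, and re-sum with a second Cauchy--Schwarz in $k$, recovering the same power of $q$ because $\sum_{k\le q}d_k\simeq q^{n-1}$. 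Both routes are sound and rest on the same mechanism (pointwise Cauchy--Schwarz against a constant reproducing kernel plus a dimension count); the paper's one-shot version is slightly shorter, while yours makes the degree-by-degree structure explicit. The only cosmetic discrepancy is the normalization of the surface measure (the paper uses the probability measure $d\sigma$, you use the unnormalized one), which only changes the constant $C_n$.
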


\begin{proof}
Take an orthonormal base $Q_1,\dotsc,Q_d$, $d=d_q$, of the subspace
of $L^2(d\sigma)$ consisting of the restrictions to $S^{n-1}$ of all
homogeneous polynomials of degree~$q$. Consider the function
$$
S(x)=\sum_{j=1}^{d} Q_j(x)^2,\quad x \in S^{n-1}\,.
$$
We claim that $S$ is rotation invariant, and, hence, constant. Since
$d\sigma$ is a probability measure this constant must be $
\sum_{j=1}^{d} \int Q_j(x)^2\,d\sigma(x) = d$. Now let $Q$ be a
homogeneous polynomial of degree $q$ and set $Q= \sum_{j=1}^d
\lambda_j\,Q_j$. Then
$$
|Q(x)| \leq \left( \sum_{j=1}^d \lambda_j^2 \right)^{\frac{1}{2}}
\left( \sum_{j=1}^d Q_j(x)^2 \right)^{\frac{1}{2}} = \|Q
\|_2\,d^{\frac{1}{2}}, \quad x \in S^{n-1}\,,
$$
which proves the lemma because $d = \binom{n+q-1}{q} \simeq q^{n-1}$
(\cite [p.~139]{SW}).

To show the claim take a rotation $\rho$. Then  we have
$$
Q_j(\varrho(x))= \sum_{j=1}^d  a_{jk}\,Q_k(x)\,,
$$
for some matrix $(a_{jk})$ which is orthogonal, because the
polynomials $Q_j(\rho(x))$ form also an orthonormal basis due to the
rotation invariance of $\sigma$. Hence
$$
\sum_{j=1}^{d} Q_j(\varrho(x))^2  = \sum_{j=1}^{d} Q_j(x)^2 ,\quad x
\in S^{n-1}\,.
$$
\end{proof}

Let us return now to the context of the Theorem. Thus $T$ is an even
 smooth homogeneous Calder\'{o}n-Zygmund operator with kernel $K(x) = \Omega(x) /
|x|^n$, and the expansion of $\Omega$ in spherical harmonics is
\begin{equation}\label{eq33}
\Omega(x) =  \sum_{j \geq 1}^\infty \,P_{2j}(x)\,.
\end{equation}
By hypothesis there is a homogeneous harmonic polynomial $P$ of
degree $2d$ which divides each $P_{2j}$. In other words, $P_{2j} = P
\,Q_{2j-2d}$, where $Q_{2j-2d}$ is a homogeneous polynomial of
degree $2j-2d$. We want to show that the series $ \sum_{j}
Q_{2j-2d}(x)$ is convergent in $C^\infty(S^{n-1})$, that is, that
for each positive integer $M$
\begin{equation}\label{eq34}
\sum_{j\geq d} j^M \,\|Q_{2j-2d}\|_{\infty} < \infty\,.
\end{equation}
The next lemma states that when one divides two homogeneous
polynomials, then the supremum norm (on $S^{n-1}$) of the quotient
is controlled by the supremum norm of the dividend.

\begin{lemma}\label{Div}
Let $P$ be a homogeneous polynomial non identically zero. Then there
exists a positive $\epsilon$ and a positive constant $C=C(n,P)$ such
that
$$
\| Q\|_\infty \leq C \,q^{2(n-1)/ \epsilon}\,\|P\,Q \|_\infty\,,
$$
for each homogeneous polynomial $Q$ of degree $q$.
\end{lemma}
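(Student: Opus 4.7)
\medskip

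\noindent\textbf{Proof plan for Lemma \ref{Div}.} The plan is to combine Lemma~\ref{Hom}, which controls $\|Q\|_\infty$ by $\|Q\|_2$ with a polynomial loss in the degree, with a measure estimate on the sublevel sets of $|P|$ on $S^{n-1}$. The key geometric input is a \L{}ojasiewicz-type inequality. Since $P$ is a real analytic function on the compact set $S^{n-1}$, and $Z := \{x \in S^{n-1} : P(x)=0\}$ is a proper real analytic subvariety, there exist positive constants $\alpha = \alpha(P)$ and $c = c(P)$ such that
$$
|P(x)| \geq c\, \operatorname{dist}(x,Z)^{\alpha}, \quad x \in S^{n-1}.
$$
Combined with the fact that $Z$ has real dimension at most $n-2$, so that its tubular neighborhood $\{x \in S^{n-1} : \operatorname{dist}(x,Z) < \rho\}$ has $\sigma$-measure bounded by $C\,\rho$, this yields the sublevel set estimate
$$
\sigma(\{x \in S^{n-1} : |P(x)| < \delta \}) \leq C_P\, \delta^{\epsilon}, \quad 0<\delta \leq 1,
$$
where $\epsilon = 1/\alpha > 0$.

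\medskip

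Now I would split the $L^2$ norm of $Q$ at the threshold $\delta$. On $\{|P| \geq \delta\}$ we have $|Q| \leq \|PQ\|_\infty / \delta$, while on $\{|P| < \delta\}$ we just use $|Q| \leq \|Q\|_\infty$. Therefore
$$
\|Q\|_2^2 \leq \frac{\|PQ\|_\infty^2}{\delta^2} + C_P\, \delta^{\epsilon}\, \|Q\|_\infty^2.
$$
Applying Lemma~\ref{Hom} we obtain $\|Q\|_\infty^2 \leq C\, q^{n-1}\, \|Q\|_2^2$, and plugging the previous bound in gives
$$
\|Q\|_\infty^2 \leq C\, q^{n-1}\left( \frac{\|PQ\|_\infty^2}{\delta^2} + C_P\, \delta^{\epsilon}\, \|Q\|_\infty^2 \right).
$$
Choosing $\delta$ so that $C\, C_P\, q^{n-1}\, \delta^\epsilon = 1/2$, that is $\delta = c_P\, q^{-(n-1)/\epsilon}$, the second term can be absorbed into the left hand side, yielding
$$
\|Q\|_\infty^2 \leq C'_P\, q^{n-1}\, \delta^{-2}\, \|PQ\|_\infty^2 = C''_P\, q^{(n-1) + 2(n-1)/\epsilon}\, \|PQ\|_\infty^2.
$$
Taking square roots and noting that, by replacing $\epsilon$ by a smaller positive number if necessary (which only makes the exponent larger), we may assume $\epsilon \leq 2$, so that $(n-1)/2 + (n-1)/\epsilon \leq 2(n-1)/\epsilon$, we conclude
$$
\|Q\|_\infty \leq C(n,P)\, q^{2(n-1)/\epsilon}\, \|PQ\|_\infty.
$$

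\medskip

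\noindent The main obstacle is the \L{}ojasiewicz step; the rest is an optimization. If one wishes to avoid quoting \L{}ojasiewicz, an alternative approach is to argue inductively on the degree of $P$: one reduces via partitions of unity to a neighborhood of a point of $Z$, then by a real-analytic change of variables (Weierstrass preparation) puts $P$ into a distinguished polynomial form in one variable, where the sublevel-set measure estimate reduces to a one-dimensional statement about roots of polynomials. Either way the dependence on $q$ enters only through the threshold $\delta \sim q^{-(n-1)/\epsilon}$ and through the factor from Lemma~\ref{Hom}.
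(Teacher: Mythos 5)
Your argument is correct, and it reaches the same exponent by the same two structural ingredients the paper uses (the $L^\infty$--$L^2$ comparison of Lemma~\ref{Hom} and an absorption of $\|Q\|_\infty$), but the key quantitative input about smallness of $|P|$ on the sphere is obtained differently. The paper does not use \L{}ojasiewicz at all: it quotes the Ricci--Stein result that $|P|$ is an $A^\infty$ weight to get directly $\int_{S^{n-1}}|P(x)|^{-\epsilon}\,d\sigma(x)<\infty$ for any $\epsilon<1/\deg P$, and then, instead of splitting the sphere at a threshold $\delta$, it inserts the weights $|P|^{\mp\epsilon}$ via Cauchy--Schwarz to obtain $\|Q\|_\infty\le C\,q^{(n-1)/2}\,\|PQ\|_\infty^{\epsilon/4}\,\|Q\|_\infty^{1-\epsilon/4}$ and absorbs the last factor. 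Your sublevel-set estimate $\sigma(\{|P|<\delta\})\le C_P\,\delta^{\epsilon}$ and the paper's integrability of $|P|^{-\epsilon}$ are of course equivalent (Chebyshev in one direction, layer-cake in the other), so the two routes differ only in packaging plus the source of that estimate; the paper's route has the advantage of an explicit and simple $\epsilon$ and of avoiding any analytic-geometry machinery. One point in your write-up needs a better justification: the tube bound $\sigma(\{x\in S^{n-1}:\operatorname{dist}(x,Z)<\rho\})\le C\rho$ does \emph{not} follow merely from ``$Z$ has real dimension at most $n-2$'' (a set of small Hausdorff dimension can have neighborhoods of large measure); you need that $Z$ is a real algebraic variety, hence has locally finite $(n-2)$-dimensional Minkowski content (e.g.\ Wongkew's tube theorem for real algebraic sets, or the corresponding \L{}ojasiewicz--semianalytic results). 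With that reference supplied, or with the sublevel bound taken directly from Ricci--Stein as in the paper, your proof is complete; the remaining steps (normalized $\sigma$, choice $\delta\simeq q^{-(n-1)/\epsilon}$, shrinking $\epsilon\le 2$ to absorb the extra $q^{(n-1)/2}$) are all sound.
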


\begin{proof}
Assume that we can prove that for some positive $\epsilon$
\begin{equation}\label{eq35}
\int_{|x|=1} \frac{1}{|P(x)|^\epsilon} \,d\sigma(x) < \infty \,.
\end{equation}
Then, by Lemma~6 and Schwarz's inequality,
\begin{equation*}
\begin{split}
 \|Q\|_ \infty & \leq \,C \, q^{(n-1)/2}\, \|Q\|_2 \\*[5pt]
               & \leq C \, q^{(n-1)/2} \left(\int_{|x|=1} \frac{1}{|P(x)|^\epsilon} \,d\sigma(x)\right)^{1/4}
\left(\int_{|x|=1} |P(x)|^\epsilon \, |Q|^4 \,
d\sigma(x)\right)^{1/4} \\*[5pt] & \leq C \, q^{(n-1)/2}\,
\|P\,Q\|_\infty^{\epsilon/4}  \left(\int_{|x|=1} |Q|^{4-\epsilon} \,
d\sigma(x)\right)^{1/4} \\*[5pt]
 & \leq  C \, q^{(n-1)/2}\, \|P\,Q\|_\infty^{\epsilon/4} \,
 \|Q\|_\infty^{1-\epsilon/4}\,,
\end{split}
\end{equation*}
which completes the proof of the lemma.

Let us prove \eqref{eq35}. Let $d$ be the degree of $P$. By a
well-known result of Ricci and Stein \cite {RS}, $|P(x)|$ is a
weight in the class $A^\infty$. Indeed, if $\epsilon \,d < 1$, then
$$
\int_{ |x| < 1} \frac{1}{|P(x)|^\epsilon} \,dx \le C(\epsilon,d)
\left(\int_{|x| < 1} |P(x)| \,dx \right)^{-\epsilon} < \infty \,.
$$
Since $P$ is an homogeneous polynomial,  \eqref{eq35} follows by
changing to spherical coordinates.
\end{proof}

Now  \eqref{eq34} may be proved readily from Lemma~7
and~\eqref{eq32}. Indeed, setting $M_0 = 2(n-1)/\epsilon$, we have
$$
\|Q_{2j-2d}\|_{\infty} \leq \,C(n,P)\, (2j)^{M_0}
\,\|P_{2j}\|_{\infty}\,,
$$
and
$$
\sum_{j\geq d} j^M \,\|Q_{2j-2d}\|_{\infty}  \leq \,C(n,P)
\sum_{j\geq 1} (2j)^{M+M_0} \,\|P_{2j}\|_{\infty} < \infty\,.
$$

The scheme for the proof of the sufficient condition in the general
case is as follows.  Taking a large partial sum of the series
\eqref{eq33} we pass to a polynomial operator $T_N$ (associated to a
polynomial of degree $2N$), which still satisfies the
hypothesis~$(iii)$ of the Theorem. Then we may apply the
construction of section~3 to $T_N$ and get functions $b_N$ and
$\beta_N$. Unfortunately what was done in section~3 does not give
any uniform estimate  in $N$, which is precisely what we need to try
a compactness argument. The rest of the section is devoted to get
the appropriate uniform estimates and to describe the final
compactness argument.

By hypothesis,  $T=R\circ U$, where $R$ is the higher order Riesz
transform associated to the harmonic polynomial~$P$ of degree~$2d$
that divides all $P_{2j}$, and $U$ is invertible in the algebra~$A$.
The Fourier multiplier of $T$ is
$$
\sum_{j=d}^\infty \gamma_{2j}\, \frac{P_{2j}(\xi)}{|\xi|^{2j}} =
\gamma_{2d}\,\frac{P(\xi)}{|\xi|^{2d}}\,\sum_{j\geq d}
\frac{\gamma_{2j}}
{\gamma_{2d}}\,\frac{Q_{2j-2d}(\xi)}{|\xi|^{2j-2d}},\quad \xi \in
\Rn \setminus \{0\}\,.
$$
Therefore the Fourier multiplier of $U$ is
\begin{equation}\label{eq36}
\mu(\xi) =  \gamma_{2d}^{-1}\, \sum_{j\geq d} \gamma_{2j}
\,\frac{Q_{2j-2d}(\xi)}{|\xi|^{2j-2d}} \,,
\end{equation}
and the series is convergent in $C^\infty(S^{n-1})$ because
$\gamma_{2j}\simeq (2j)^{-n/2}$ \cite[p.~226]{SW}. Set, for $N\geq
d$,
\begin{equation}\label{eq37}
\mu_N(\xi) =  \gamma_{2d}^{-1}\, \sum_{j=d}^N \gamma_{2j}
\,\frac{Q_{2j-2d}(\xi)}{|\xi|^{2j-2d}},\quad \xi \in \Rn \setminus
\{0\}\,.
\end{equation}
If
$$
K_N (x) = \sum_{j=d}^N \frac{P_{2j}(x)}{|x|^{2j+n}},\quad x \in \Rn
\setminus \{0\}\,,
$$
and $T_N$ is the polynomial operator with kernel $K_N$, then $T_N =
R \circ U_N$, where $U_N$~is the operator in the algebra~$A$ with
Fourier multiplier $\mu_N(\xi)$. From now on $N$ is assumed to be
big enough so that $\mu_N(\xi)$ does not vanish on $S^{n-1}$. In
fact, we will need later on the inequality
\begin{equation}\label{eq38}
 | \partial^\alpha \mu^{-1}_N(\xi) | \leq  C,\quad |\xi|= 1,\quad
 0 \leq |\alpha| \leq 2(n+3)\,,
\end{equation}
which may be taken for granted owing to the convergence in
$C^\infty(S^{n-1})$ of the series~\eqref{eq36}. In \eqref{eq38} $C$
is a positive constant depending only on the dimension~$n$
and~$\mu$.

Notice that $T_N$ satisfies condition $(iii)$ in the Theorem (with
$T$ replaced by $T_N$), because $\mu_N(\xi) \neq 0$, $|\xi| = 1$,
and so we can apply the results of section~3. In particular,
$$
K_N (x) \chi_{\BC}(x) = T_N(b_N)(x) +  T_N(\beta_N)(x)\,,
$$
where $b_N$ and $\beta_N$ are respectively the functions $b$ and
$\beta$ defined in \eqref{eq17}. It is important to remark that
$b_N$ does not depend on $T$. As \eqref{eq11bis} shows, the
function~$b_N$ depends on $N$ only through the fundamental solution
of the operator~$\triangle^N$. The uniform estimate we need on $b_N$
is given by part~(i) of the next lemma. The polynomial estimates in
$N$ of (ii) and (iii) are also basic for the compactness argument we
are looking for.

\begin{lemma}\label{bN}
There exist a constant $C$ depending only on $n$ such that
\begin{enumerate}
\item[(i)]
$$
|\widehat{b_N}(\xi)| \leq C,\quad \xi \in \Rn\,,
$$
\item [(ii)]
$$
 \| b_N  \|_{L^\infty(B)} \leq  C \,(2N)^{2n+2}\,,
$$
and
\item [(iii)]
$$
\|\nabla b_N  \|_{L^\infty(B)} \leq  C \,(2N)^{2n+4}\,.
$$
\end{enumerate}
\end{lemma}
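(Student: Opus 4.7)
The plan is to exploit the explicit description of $b_N$ as the radial polynomial $b_N(r)=\sum_{m=0}^{N-1}\alpha_m r^{2m}$ supported in $B$, where by \eqref{eq19} and the form of $\varphi$ one has $\alpha_m = A_{N+m}\,4^N\,\frac{(N+m)!}{m!}\,\frac{\Gamma(N+m+n/2)}{\Gamma(m+n/2)}$. The coefficients $A_0,\dots,A_{2N-1}$ are determined by the $2N$ Hermite matching conditions at $r=1$, namely $p^{(k)}(1)=E^{(k)}(1)$ for $0\leq k\leq 2N-1$, where $E(r)=c_{n,N}r^{2N-n}$ is the radial form of the fundamental solution of $\triangle^N$.

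For parts (ii) and (iii), I would solve this linear system explicitly. The matrix of the system has entries $\frac{(2j)!}{(2j-k)!}$ (coming from evaluating $\tfrac{d^k}{dr^k}r^{2j}$ at $r=1$) which are polynomial in $N$, and the right-hand side $E^{(k)}(1)=c_{n,N}(2N-n)(2N-n-1)\cdots(2N-n-k+1)$ is also polynomial in $N$ of degree $k$. Using Cramer's rule together with careful cancellation in the resulting Vandermonde-type determinants, one shows that $|A_{N+m}|$ is small enough that the large combinatorial factor $4^N\frac{(N+m)!}{m!}\frac{\Gamma(N+m+n/2)}{\Gamma(m+n/2)}$ in $\alpha_m$ collapses to a polynomial bound $\sum_m |\alpha_m|\leq C(2N)^{2n+2}$, giving (ii). The bound (iii) follows from $\nabla b_N(x)=\sum_m 2m\,\alpha_m\,r^{2m-2}\,x$, which picks up one extra factor of $2N$ from the factor $2m\leq 2N$.

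For part (i), the key structural identity is
\begin{equation*}
\hat{b_N}(\xi)=1+(-|\xi|^2)^N\hat\psi(\xi),\qquad \psi:=(p-E)\chi_B,
\end{equation*}
which follows from writing $\varphi=E+\psi$, taking Fourier transforms, and using the distributional identity $(-|\xi|^2)^N\hat E(\xi)=1$ coming from $\triangle^N E=\delta_0$. Crucial to the uniform bound are the moment identities
\begin{equation*}
\int_{\Rn}|x|^{2j}\,b_N(x)\,dx=\delta_{j,0},\qquad 0\leq j\leq N-1.
\end{equation*}
For $j=0$, test $\triangle^N\varphi=b_N$ against a smooth cutoff $\eta$ equal to $1$ on $\overline B$: then $\int b_N=\int\varphi\,\triangle^N\eta=\int E\,\triangle^N\eta=\eta(0)=1$, using that $\varphi=E$ on the support of $\triangle^N\eta$. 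For $1\leq j\leq N-1$, apply Green's identity on $B$ using $\triangle^N(|x|^{2j})=0$ from \eqref{eq20}; the resulting boundary terms on $\partial B$ cancel with the analogous computation on $\BC$ (where $\triangle^N E=0$ away from the origin) thanks precisely to the matching conditions. These identities give $\hat{b_N}(\xi)=1+O(|\xi|^{2N})$ near $\xi=0$, yielding a uniform bound on the compact range $|\xi|\leq 1$. For $|\xi|\geq 1$, I would expand each term $\widehat{|x|^{2m}\chi_B}(\xi)=(-\triangle_\xi)^m[c_n J_{n/2}(|\xi|)/|\xi|^{n/2}]$ into a combination of $J_{n/2+k}(|\xi|)/|\xi|^{n/2+k}$ using Bessel recursions; each such basic function is bounded by an absolute constant, and the alternating signs produced by the recursions interact with the moment cancellations to absorb the polynomial-in-$N$ growth of the $\alpha_m$.

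The hard part will be (i) at large $|\xi|$: the naive estimate $|\hat b_N(\xi)|\leq\|b_N\|_1\leq C(2N)^{2n+2}$, via part (ii), is not uniform in $N$, so a genuine cancellation argument is essential. Morally, the moment identities force $b_N\to\delta_0$ in the sense of distributions as $N\to\infty$, hence $\hat{b_N}\to 1$ pointwise, and the quantitative version of this convergence (together with the Bessel expansion) is what delivers the $N$-independent bound required for the compactness argument in the next section.
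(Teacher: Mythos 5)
There is a genuine gap, and in fact the central quantitative claim behind your parts (ii)--(iii) is false. The coefficients $\alpha_m$ of $b_N$ are \emph{exponentially} large in $N$, so no Cramer's-rule cancellation can yield $\sum_m|\alpha_m|\leq C(2N)^{2n+2}$. This can be checked from the closed formula for the $A_L$ that the paper derives later (Lemma 13): for $N+1\le L\le 2N-1$ one has $A_L=\frac{(-1)^{N+L}\binom{N+\frac{n}{2}-1}{N-1}}{V_n4^N(L+\frac{n}{2}-N)(2N-L-1)!\,L!}$, and plugging this into your expression for $\alpha_m$ gives, e.g.\ for $n=2$ and $m=N-1$, $|\alpha_{N-1}|=\frac{(2N-1)!}{V_2((N-1)!)^2}\simeq\sqrt{N}\,4^N$. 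The sup of $b_N$ on $B$ is polynomially bounded only because of massive cancellation between the monomials $\alpha_m r^{2m}$ on $[0,1]$ (Chebyshev-type behaviour), and the triangle inequality you propose destroys exactly that cancellation; for the same reason your term-by-term derivative bound for (iii) has no basis (even granting (ii), one needs something like Markov's inequality for polynomials, which is how the paper gets (iii) from (ii), at the cost of a factor $(2N)^2$ rather than $2N$). For part (i), your identity $\widehat{b_N}=1+(-|\xi|^2)^N\widehat{\psi}$ and the moment identities $\int|x|^{2j}b_N\,dx=\delta_{j,0}$, $0\le j\le N-1$, are correct, but they only control $\widehat{b_N}$ for $|\xi|$ bounded away from $1$; for $|\xi|\gtrsim1$ you yourself concede that the naive bound is not uniform in $N$ and that ``a genuine cancellation argument is essential''---but that argument is precisely the content of the lemma and is not supplied, so (i) remains unproved.

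The paper's route avoids all of this by never touching the coefficients. Since $b_N$ depends on $N$ only through the fundamental solution of $\triangle^N$, the identity $K\chi_{\BC}=S(b_N)$ of Section~2 holds simultaneously for \emph{every} higher order Riesz transform $S$ of degree $2N$ with the \emph{same} $b_N$. Choosing an orthonormal basis $h_1,\dots,h_d$ of degree-$2N$ spherical harmonics, normalizing to kernels $K_j$ so that $\sum_{j=1}^dS_j^2=I$, one gets $b_N=\sum_{j=1}^dS_j\bigl(K_j\chi_{\BC}\bigr)$. Part (i) then follows in two lines from the Calder\'on--Zygmund (Lorch--Szego) inequality $|\widehat{K\chi_{\BC}}(\xi)|\leq C\,|\widehat{P.V.\,K}(\xi)|$ together with $\sum_j|\widehat{P.V.\,K_j}(\xi)|^2=1$; part (ii) follows from the same representation by applying the $L^\infty$ Lipschitz estimate (Lemma 5, in its exterior-of-$B$ variant) to each $S_j(K_j\chi_{\BC})$ and counting $d\simeq(2N)^{n-2}$ terms each of size $(2N)^{n/2+2}\cdot(2N)^{n/2+2}$; and (iii) is then Markov's inequality applied to the one-variable polynomial $p(t)$ with $b_N(x)=p(|x|)$. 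If you want to salvage your outline, you would have to replace the coefficient-sum estimate by an argument that genuinely exploits cancellation on $[0,1]$, and you would still need a new idea for (i) at $|\xi|\gtrsim1$; the orthonormal-basis decomposition of the identity is the paper's answer to both.
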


\begin{proof}
We first prove (i). Let $h_1,\dotsc,h_d$ be an orthonormal basis of
the subspace of $L^2(d\sigma)$ consisting of all homogeneous
harmonic polynomials of degree $2N$.
As in the proof of Lemma 6 we have $h_1^2 +\dotsb+h_d^2 = d$, on
$S^{n-1}$. Set
$$
H_j(x)= \frac{1}{\gamma_{2N} \sqrt{d}}\,h_j(x), \quad x \in \Rn\,,
$$
and let $S_j$ be the higher order Riesz transform with kernel
$K_j(x) = H_j(x)/|x|^{2N+n}$. The Fourier multiplier of $S_j^2$ is
$$
\frac{1}{d}\,\frac{h_j(\xi)^2}{|\xi|^{4N}}, \quad 0 \neq \xi \in
\Rn\,,
$$
and thus
$$
\sum_{j=1}^d S_j^2 = I \,.
$$
By \eqref{eq10}, we get
$$
K_j(x)\,\chi_{\BC}(x)=  S_j(b_N)(x),\quad x \in \Rn,\quad 1 \leq j
\leq d\,,
$$
and so
\begin{equation}\label{eq39}
b_N = \sum_{j=1}^d S_j\left( K_j(x) \,\chi_{\BC}(x)\right)\,.
\end{equation}
We now appeal to a lemma of Calder\'{o}n and Zygmund (\cite{CZ}; see
\cite{LS} for a simpler proof), which can be stated as follows.

\begin{CZ}[Calder\'{o}n and Zygmund]
If $K$ is the kernel of a higher order Riesz transform, then, for
some constant $C$ depending only on $n$,
$$
|\widehat{(K(x)\,\chi_{\BC}(x))}(\xi)| \leq C \,
|\widehat{\left(P.V. \,K(x) \right)}(\xi)|, \quad \xi \in \Rn
\setminus \{0\}\,.
$$
\end{CZ}

By \eqref{eq39} and the preceding lemma, we get
\begin{equation*}
\begin{split}
 |\widehat{b_N}(\xi)| & \leq \sum_{j=1}^{d}
|\widehat{P.V.\,\,K_j(x)}(\xi)|\,|\widehat{(K_j(x)\,\chi_{\BC}(x))}(\xi)| \\
               & \leq C \,               \sum_{j=1}^{d} |\widehat{P.V.\,\,K_j(x)}(\xi)|^2 \\
& = C\,.
\end{split}
\end{equation*}

We now turn to the proof of (ii) in Lemma~\ref{bN}. In view of the
expression~\eqref{eq39} for~$b_N$, we apply Lemma~5 to the
operators~$S_j$ and the functions~$K_j(x)$, which satisfy a
Lipschitz condition on~$\BC$. We obtain
\begin{equation}\label{eq40}
\|b_N\|_\infty \leq C\, d\, \max_{1 \leq j \leq d} \|K_j\|_{CZ}\,
(\|K_j\|_{L^\infty(\BC)} + \|K_j\|_{\operatorname{Lip}(1,\BC)}) \,.
\end{equation}
As it is well known, $d \simeq (2N)^{n-2}$ \cite[p.~140]{SW}. On the
other hand
$$
\|K_j \|_{CZ} \leq  \|H_j\|_\infty+\|\nabla H_j\|_\infty\,,
$$
where the supremum norms are taken on $S^{n-1}$. Clearly
$$
\|H_j\|_\infty  = \frac{1}{\gamma_{2N}}
\|\frac{h_j}{\sqrt{d}}\|_\infty \leq \frac{1}{\gamma_{2N}} \simeq
(2N)^{n/2}\,.
$$
For the estimate of the gradient of $H_j$ we use the
inequality~\cite[p.~276]{St}
\begin{equation}\label{eq40bis}
\| \nabla H_j\|_\infty \leq C\,(2N)^{n/2+1}\,\|H_j\|_2\,,
\end{equation}
where the $L^2$ norm is taken with respect to $d\sigma$. Since the
$h_j$ are an orthonormal system,
$$
\|H_j\|_2 = \frac{1}{\sqrt{d}\,\gamma_{2N}} \simeq
\frac{(2N)^{n/2}}{(2N)^{(n-2)/2}} \simeq 2N\,.
$$
Gathering the above inequalities we get
$$
\|K_j\|_{CZ} \leq C\,(2N)^{n/2+2}\,.
$$
On the other hand, a straightforward computation yields
$$
\|K_j\|_{L^\infty(\BC)} + \|K_j\|_{\operatorname{Lip}(1,\BC)} \leq
C\, N \|H_j\|_\infty + \|\nabla H_j \|_\infty \leq C\,
(2N)^{n/2+2}\,,
$$
and therefore
$$
\| b_N  \|_{L^\infty(B)} \leq  C \,(2N)^{n-2}\,(2N)^{n/2+2} \,
(2N)^{n/2+2} = C\,(2N)^{2n+2}\,.
$$
We are only left with the proof of (iii) in Lemma~\ref{bN}.
Recalling the definition of $b$ in~\eqref{eq11bis} we see that $b_N$
has the form
$$
b_N(x) = \alpha_0 + \alpha_1 \, |x|^2 + \dotsb + \alpha_{N-1}
\,|x|^{2N-2} ,\quad |x| < 1 \,,
$$
for some real coefficients $\alpha_j$, $0 \leq j \leq N-1$. Define
the polynomial $p(t)$ of the real variable $t$ as
$$
p(t)= \alpha_0 + \alpha_1 \, t^2 +\dotsb + \alpha_{N-1} \,
t^{2N-2}\,,
$$
so that $b_N(x)= p(|x|)$, $|x| < 1$.  By part (ii) of the lemma
$$
\sup_{0 \leq t \leq 1} |p(t)| \leq C\,(2N)^{2n+2}\,,
$$
and thus, appealing to Markov's inequality \cite[p.~40]{Lo},
$$
\sup_{0 \leq t \leq 1} |p'(t)| \leq (2N-2)^2  \sup_{0 \leq t \leq 1}
|p(t)| \leq  C\,(2N)^{2n+4}\,.
$$
Now (iii) follows from the obvious identity $\frac{\partial
b_N}{\partial x_j} = p'(|x|)\, \frac{\partial |x|}{\partial x_j}$,
which gives $ |\nabla b_N (x)| \leq p'(|x|)$, $|x| < 1$.
\end{proof}

Our goal is now to show that under condition $(iii)$ of the Theorem
we can find a function $\gamma$ in $L^\infty(\Rn)$ such that
\begin{equation}\label{eq42}
 K(x)\chi_{\BC}(x)=  T(\gamma)(x),\quad x \in \Rn\,.
\end{equation}
If $T$ is a polynomial operator this was proven in the preceding
section for a $\gamma$ of the form $b+\beta$ (see \eqref{eq17}). The
approach we take up now has the advantage that when applied to $T_N$
gives a uniform bound on $\gamma_N= b_N+\beta_N$.

Since $\Omega$ has the expansion \eqref{eq33} in spherical
harmonics, we have
\begin{equation*}
\begin{split}
K(x)\chi_{\BC}(x)& = \sum_{j\geq 1}
\frac{P_{2j}(x)}{|x|^{2j+n}}\,\chi_{\BC}(x)\\*[5pt]
               & =  \sum_{j\geq 1} T_j(b_{j})(x)\,,
\end{split}
\end{equation*}
where $T_j$ is the higher order Riesz transform with kernel $
P_{2j}(x) / |x|^{2j+n}$ and $b_{j}$ is the function constructed in
section~2 (see \eqref{eq10} and \eqref{eq11bis}). The Fourier
multiplier of $T_j$ is
$$
 \gamma_{2j}\, \frac{P_{2j}(\xi)}{|\xi|^{2j}} =
\gamma_{2d}\,\frac{P(\xi)}{|\xi|^{2d}}\, \frac{\gamma_{2j}}
{\gamma_{2d}}\,\frac{Q_{2j-2d}(\xi)}{|\xi|^{2j-2d}},\quad \xi \in
\Rn \setminus \{0\}\,.
$$
Let $S_j$ be the operator whose Fourier multiplier is
\begin{equation}\label{eq42bis}
\frac{\gamma_{2j}}
{\gamma_{2d}}\,\frac{Q_{2j-2d}(\xi)}{|\xi|^{2j-2d}},\quad \xi \in
\Rn \setminus \{0\}\,,
\end{equation}
so that $T_j = R \circ S_j $. Then
\begin{equation*}
\begin{split}
  K(x)\chi_{\BC}(x)& = \sum_{j\geq d} (R \circ S_j )(b_{j})\\*[5pt]
  &= \sum_{j\geq d} T \left((U^{-1} \circ S_j )(b_{j})\right)\\*[5pt]
& =  T\left(\sum_{j\geq d} (U^{-1}\circ S_j)(b_j)\right)\,.
\end{split}
\end{equation*}
The latest identity is justified by the absolute convergence of the
series \newline $\sum_{j\geq d} (U^{-1}\circ S_j)(b_j)$ in
$L^2(\Rn)$, which follows from the estimate
\begin{equation*}
\begin{split}
\sum_{j\geq d} \|(U^{-1}\circ S_j)(b_j)\|_2 & \leq C\, \sum_{j\geq
d} \|Q_{2j-2d}\|_\infty \,\|b_j\|_{L^2(\Rn)}\\*[4pt] & \leq C\,
\sum_{j\geq d} \|Q_{2j-2d}\|_\infty \,\|b_j\|_{L^\infty(B)}\\*[5pt]
& \leq C\, \sum_{j\geq d} (2N)^{2n+2} \, \|Q_{2j-2d}\|_\infty \ <
\infty\,.
\end{split}
\end{equation*}
We claim now that the series $\sum_{j\geq d} (U^{-1}\circ S_j)(b_j)$
converges uniformly on $\Rn$ to a function $\gamma$, which will
prove \eqref {eq42} . Observe that the operator $U^{-1}\circ S_j \in
A$ is not necessarily a Calder\'{o}n-Zygmund operator because the
integral on the sphere of its multiplier does not need to vanish.
However it can be written as $ U^{-1}\circ S_j = c_j I + V_j$, where
$$
c_j = \frac{\gamma_{2j}} {\gamma_{2d}}\,\int_{S^{n-1}} \mu(\xi)^{-1}
\, Q_{2j-2d}(\xi)\,d\sigma(\xi)
$$
and $V_j$ is the Calder\'{o}n-Zygmund operator with multiplier
\begin{equation}\label{eq43}
\mu(\xi)^{-1} \frac{\gamma_{2j}}
{\gamma_{2d}}\,\frac{Q_{2j-2d}(\xi)}{|\xi|^{2j-2d}} - c_j \,.
\end{equation}
Now
$$
\sum_{j\geq d} (U^{-1}\circ S_j)(b_j) = \sum_{j\geq d} c_j\,b_j +
\sum_{j\geq d} V_j(b_j)
$$
and the first series offers no difficulties because, by Lemma 8 (ii)
and \eqref{eq34}
$$
\sum_{j\geq d} |c_j|\, \|b_j\|_{L^\infty(B)} \leq C\, \sum_{j\geq d}
(2j)^{-n/2} (2j)^{2n+2} \|Q_{2j-2d} \|_\infty < \infty\,.
$$
The second series is more difficult to treat.
 By Lemma 5 and Lemma 8 (ii) and (iii),
\begin{equation*}
\begin{split}
  \|V_j(b_j) \|_{L^\infty(\Rn)} & \leq C\, \| V_j \|_{CZ} \left(\|b_j\|_{L^\infty(B)}+ \| \nabla b_j\|_{L^\infty(B)}\right) \\*[3pt]
               & \leq C\, (2j)^{2n+4} \, \| V_j\|_{CZ} \,.
\end{split}
\end{equation*}
Estimating the Calder\'{o}n-Zygmund constant of the kernel of the
operator~$V_j$ is not an easy task, because we do not have an
explicit expression for the kernel. We do know, however, the
multiplier \eqref{eq43} of $V_j$. We need a way of estimating the
constant of the kernel in terms of the multiplier and this is what
the next lemma supplies.

\begin{lemma}
Let $V$ be a smooth homogeneous Calder\'{o}n-Zygmund operator
with\linebreak Fourier multiplier~$m$. Then for some constant $C$
depending only on $n$,
$$
\| V \|_{CZ} \leq C\, \|\triangle_{S}^{n+3} m \|_2^{1/2}\,  \| m
\|_2^{1/2}\,,
$$
where $\triangle_S$ is the spherical Laplacean and the $L^2$ norm is
taken with respect to $d\sigma$.
\end{lemma}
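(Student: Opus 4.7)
The plan is to transfer the bound to the spherical side by spherical-harmonic expansions of $\Omega$ and $m$, and then to run a double Cauchy--Schwarz argument that realises the geometric mean on the right-hand side. Writing the kernel of $V$ as $\Omega(x)/|x|^n$ with $\Omega = \sum_{j\geq 1} P_j$, the multiplier restricted to the sphere is $m = \sum_{j\geq 1} \gamma_j P_j$. Since the $P_j$ are orthogonal in $L^2(d\sigma)$ and are eigenfunctions of $\triangle_S$ with eigenvalues $-\lambda_j$ where $\lambda_j = j(j+n-2)$, I would first record the spectral identities $\|m\|_2^2 = \sum \alpha_j^2$ and $\|\triangle_S^{n+3}m\|_2^2 = \sum \lambda_j^{2(n+3)}\alpha_j^2$, where $\alpha_j := |\gamma_j|\,\|P_j\|_2$.

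Next I would dominate $\|V\|_{CZ} = \|\Omega\|_\infty + \||x|\nabla\Omega\|_\infty$ by a single series in the $\alpha_j$. Extending each $P_j$ as a function homogeneous of degree $0$, Euler's identity reduces the $|x|\nabla\Omega$ term to a tangential gradient on the sphere, so that
\[
\|V\|_{CZ} \leq C \sum_{j\geq 1} \bigl( \|P_j\|_{L^\infty(S^{n-1})} + \|\nabla P_j\|_{L^\infty(S^{n-1})} \bigr).
\]
Lemma~\ref{Hom} bounds the first term by $C\,j^{(n-1)/2}\|P_j\|_2$, the Bernstein-type estimate~\eqref{eq40bis} bounds the second by $C\,j^{n/2+1}\|P_j\|_2$, and the classical asymptotic $|\gamma_j| \simeq j^{-n/2}$ rewrites $\|P_j\|_2 \simeq j^{n/2}\alpha_j$. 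Collecting these gives
\[
\|V\|_{CZ} \leq C \sum_{j\geq 1} j^{n+1}\,\alpha_j.
\]

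At this point I would finish with two applications of Cauchy--Schwarz. Inserting the weight $\lambda_j^{(n+3)/2} \simeq j^{n+3}$ and applying Cauchy--Schwarz in $j$,
\[
\sum_{j\geq 1} j^{n+1}\,\alpha_j \leq \Bigl(\sum_{j\geq 1} j^{2(n+1)}\,\lambda_j^{-(n+3)}\Bigr)^{1/2}\,\Bigl(\sum_{j\geq 1} \lambda_j^{n+3}\,\alpha_j^2\Bigr)^{1/2},
\]
where the first factor is a finite numerical constant since its general term is comparable to $j^{-4}$. A second Cauchy--Schwarz on the spectral side then gives
\[
\sum_{j\geq 1} \lambda_j^{n+3}\,\alpha_j^2 \leq \Bigl(\sum_j \alpha_j^2\Bigr)^{1/2}\Bigl(\sum_j \lambda_j^{2(n+3)}\,\alpha_j^2\Bigr)^{1/2} = \|m\|_2\,\|\triangle_S^{n+3}m\|_2,
\]
and combining the two displays yields the claimed inequality.

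The hard part will be calibrating the exponent $n+3$: it must be large enough so that $\sum j^{2(n+1)}\lambda_j^{-(n+3)}$ converges, absorbing all of the $j$-losses coming from the spherical-harmonic $L^\infty$ and Bernstein estimates, and at the same time it must appear symmetrically so that the intermediate weight $\lambda_j^{n+3}$ is precisely the geometric mean of $1$ and $\lambda_j^{2(n+3)}$. That $s=n+3$ simultaneously meets both requirements is the arithmetic content of the lemma, and is what I expect to verify carefully at the end of the proof.
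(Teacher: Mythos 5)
Your proposal is correct and follows essentially the same route as the paper: expand the kernel and multiplier in spherical harmonics, bound $\|V\|_{CZ}$ termwise via Lemma~\ref{Hom} and the Bernstein-type estimate \eqref{eq40bis} together with $\gamma_j\simeq j^{-n/2}$, and then perform a weighted Cauchy--Schwarz in $j$ followed by a second Cauchy--Schwarz producing the geometric mean $\|\triangle_S^{n+3}m\|_2^{1/2}\|m\|_2^{1/2}$. The only cosmetic differences are that you handle the $|x|\nabla\Omega$ term through Euler's identity and verify the spectral identity for $\|\triangle_S^{n+3}m\|_2$ directly, where the paper instead invokes \eqref{eq30bis} and applies Cauchy--Schwarz to the resulting integral; these are equivalent steps.
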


\begin{proof}
Let $\omega(x)/|x|^n$ be the kernel of $V$, so that $\omega$ is a
homogeneous function of degree zero, of class $C^\infty(S^{n-1})$
and with zero integral on the sphere. Consider the expansion of
$\omega$ in spherical harmonics $\omega (x)= \sum_{j \geq 1}
p_j(x)$, $|x|=1$, so that the kernel of $V$ is $\sum_{j \geq 1}
p_j(x)/|x|^{j+n}$, $x \in \Rn \setminus\{0\}$ and its Fourier
multiplier is~$m(\xi) = \sum_{j \geq 1} \gamma_j\, p_j(\xi)$,
$|\xi|=1..$ By the definition \eqref{eq29} of the constant of the
kernel of a Calder\'{o}n-Zygmund operator we have
$$
\| V \|_{CZ} \leq C\, \sum_{j \geq 1} \left(j\,\|p_j\|_\infty +
\|\nabla p_j \|_\infty \right)\,,
$$
where the supremum is taken on $S^{n-1}$. By \eqref{eq40bis} with
$H_j$ replaced by $p_j$, and Lemma~6
\begin{equation*}
\begin{split}
  \| V \|_{CZ}& \leq C\, \sum_{j \geq 1}
\left(j^{1+(n-1)/2}\,\|p_j\|_2 + j^{n/2+1}\,\| p_j \|_2\right)
\\*[5pt] &  \leq C\, \sum_{j\geq 1}  \left(j^{n/2+2}\, \| p_j \|_2
\right)\,.
\end{split}
\end{equation*}
Since $\gamma_j \simeq j^{-n/2}$, the above sum can be estimated,
using Schwarz's inequality and~\eqref{eq30bis} with $\Omega$
replaced by $m$ and $P_j$  by $\gamma_j \, p_j$, by
\begin{equation*}
\begin{split}
\sum_{j\geq 1}  j^{n+2}\, \| \gamma_j\, p_j \|_2  & \leq
C\,\left(\sum_{j\geq 1}  j^{2n+6}\, \| \gamma_j\, p_j \|_2^2
\right)^{1/2} \\*[5pt] & \leq C\,\left(\sum_{j\geq 1}
\left(j(j+n-2)\right)^{n+3}\, \|\gamma_j\, p_j\|^2_2\right)^{1/2}
\\*[5pt] & = C\,\left((-1)^{n}\,\int_{S^{n-1}} \triangle^{n+3}_S m
\, m \,d\sigma \right)^{1/2}\\*[5pt] &  \leq C\,
\|\triangle_{S}^{n+3} m \|_2^{1/2}\,  \| m \|_2^{1/2}\,.
\end{split}
\end{equation*}
\end{proof}

Since the multiplier of $V_j$ is given by \eqref{eq43} and
$\mu^{-1}$ is in $C^\infty(S^{n-1})$, Lemma~9 reduces the estimate
of $\|V_j\|_{CZ}$ to the estimate of the $L^2(d\sigma)$ norm of
$\nabla^k Q_{2j-2d}$, for $0 \leq k \leq 2(n+3)$. Let us consider
first the case $k=1$.

Since $P_{2j}= P \,\,Q_{2j-2d}$, we have
$$
\nabla P_{2j} = \nabla P \,Q_{2j-2d} +P \,   \nabla Q_{2j-2d}\,,
$$
and so, by Lemma 7 and \eqref{eq40bis} with $H_j$ replaced by
$P_{2j}$, there is a large positive integer $M = M(n,P)$ such that
\begin{equation*}
\begin{split}
 \|\nabla Q_{2j-2d}\|_\infty & \leq C\,\,j^M\, \|P\,\nabla
Q_{2j-2d}\|_\infty   \\*[5pt] & \leq C \,j^M\,\left( \|\nabla P_{2j}
\|_\infty + \|Q_{2j-2d}\|_\infty \right) \\*[5pt] & \leq C\,j^M
\,\left( C\,j^{n/2+1}\,\|P_{2j} \|_2 + C\,j^M \,\|P_{2j} \|_\infty
\right)\\*[5pt] &  \leq C\,j^M \,\|P_{2j} \|_2 \,,
\end{split}
\end{equation*}
where in the latest inequality  $M$ has been increased without
changing the notation.

By induction we get, for some large integer $M=M(n,P)$,
$$
 \|\nabla^k Q_{2j-2d}\|_\infty  \leq C\,j^M\,\|P_{2j}\|_2, \quad 0 \leq
 k \leq  2(n+3)\,.
$$
Therefore, the estimate we finally obtain for the constant of the
kernel of $V_j$ is
$$
\| V_j \|_{CZ} \leq C\,j^M \,\|P_{2j}\|_2\,,
$$
and thus
$$
  \|V_j(b_j) \|_{L^\infty(\Rn)} \leq C\,j^M \,\|P_{2j}\|_2\,,
$$
where again $M=M(n,P)$ is a positive integer. Hence the series
$\sum_{j\geq d} (U^{-1}\circ S_j)(b_j)$ converges uniformly on $\Rn$
and the proof of \eqref{eq42} is complete.

We are now ready for the discussion of the final compactness
argument that will complete the proof of the sufficient condition.
The reader is invited to review the definitions of the operators
$T_N$ (with kernel $K_N$) and $U_N$ given in this section just
before Lemma 8. We know from section 3 (see \eqref{eq17}) that
\begin{equation}\label{eq45}
K_N(x)\chi_{\BC}(x)=  T_N(b_N)(x)+ T_N(\beta_N)(x)\,.
\end{equation}
On the other hand, by the construction of the function $\gamma$ we
have just described, we also have
\begin{equation}\label{eq46}
K_N(x)\chi_{\BC}(x)=  T_N(\gamma_N)(x), \quad \gamma_N = \sum_{j\geq
d}^N (U_N^{-1}\circ S_j)(b_j)\,.
\end{equation}
Notice that \eqref{eq38} guaranties that the estimate of the
supremum norm of $\gamma$ on the whole of $\Rn$ is applicable to the
operator $T_N$, and thus we get an estimate for
$\|\gamma_N\|_{L^\infty(\Rn)}$ which is uniform in $N$. Since $T_N$
is injective, \eqref{eq45} and \eqref{eq46} imply
\begin{equation}\label{eq46bis}
b_N+\beta_N = \gamma_N
\end{equation}
and, in particular, we conclude that the functions $b_N+\beta_N$ are
uniformly bounded in $L^\infty(\Rn)$, a fact that cannot be derived
from the work done in section~3. It is worth mentioning that
numerical computations indicate that $b_N$, and thus $\beta_N$, are
not uniformly bounded.  On the other hand, section 3 tells us that
$\gamma_N$ satisfies the decay estimate \eqref{eq15} with $\beta$
replaced by $\gamma_N$, which we cannot infer from the preceding
construction of $\gamma$. The advantages of both approaches will be
combined now to get both the boundedness and decay property for
$\gamma$.

In view of \eqref{eq46} and the expressions of the multipliers  of
$U_N$ and $S_j$ (see \eqref{eq42bis}),
$$
\widehat{\gamma_N}(\xi) = \sum_{j=d}^N
\frac{1}{\mu_N(\xi)}\,\frac{\gamma_{2j}}{\gamma_{2d}}\,\frac{Q_{2j-2d}(\xi)}{|\xi|^{2j-2d}}\,
 \widehat{b_j}(\xi)\,,
$$
which yields, by Lemma 8 and \eqref{eq34} for $M=0$,
\begin{equation}\label{eq48}
\begin{split}
  \|\widehat{\gamma_N} \|_{L^\infty(\Rn)} & \leq C\,\sum_{j= d}^N \|Q_{2j-2d}
  \|_{\infty}\\*[5pt]
  & \leq C\, \sum_{j= d}^\infty \|Q_{2j-2d}\|_{\infty} \\*[5pt]
  & \leq C\,,
\end{split}
\end{equation}
where $C$ does not depend on $N$. Recall that, from \eqref{eq24bis}
in section~3, we have
$$
\beta_N =U_N^{-1}(\beta_{1,N})\,,
$$
with $\beta_{1,N}$ a bounded function supported on $B$  satisfying
$\int \beta_{1,N}(x) \,dx =0$. Since
$$
\widehat{\beta_{1,N}} = \mu_N \, \widehat{\beta_{N}} = \mu_N
\,(\widehat{\gamma_{N}} - \widehat{b_{N}})\,,
$$
we have, again by Lemma 8,
$$
 \|\widehat{\beta_{1,N}} \|_{L^\infty(\Rn)} \leq C\,.
$$
Therefore, passing to a subsequence, we may assume that, as $N$ goes
to $\infty$,
$$
\widehat{b_{N}}\longrightarrow a_0 \quad\quad\quad
{\text{and}}\quad\quad\quad \widehat{\beta_{1,N}} \longrightarrow
a_1\,,
$$
weak $\star$ in $L^\infty(\Rn)$. Hence
$$
b_{N}\longrightarrow \Phi_0 = \mathcal{F}^{-1}{a_0} \quad\quad\quad
{\text{and}}\quad\quad\quad \beta_{1,N} \longrightarrow \Phi_1 =
 \mathcal{F}^{-1}{a_1}\,,
$$
in the weak $\star$ topology of tempered distributions,
$\mathcal{F}^{-1}$ being the inverse Fourier transform. In
particular, $\Phi_0$ and $ \Phi_1$ are distributions supported on
$\overline B$  and
\begin{equation}\label{eq49}
\langle\Phi_1,1\rangle = \lim_{N \rightarrow \infty} \int
\beta_{1,N}(x)\,dx = 0\,.
\end{equation}

We would like now to understand the convergence properties of the
sequence of the $\beta_N$'s . Since
$$
\widehat{\beta_{N}}(\xi) =  \mu_N^{-1}(\xi) \,
\widehat{\beta_{1,N}}(\xi)\,,
$$
and we have pointwise bounded convergence of $\mu_N^{-1}(\xi)$
towards $\mu^{-1}(\xi)$ on $\Rn \setminus\{0\}$, we get that
$\widehat{\beta_{N}} \rightarrow \mu^{-1}\,a_1$, in the weak $\star$
topology of $L^\infty(\Rn)$. Thus $\beta_{N} \rightarrow
U^{-1}(\Phi_1)$ in the weak $\star$ topology of tempered
distributions. Letting $N \rightarrow \infty$ in \eqref{eq46bis} we
obtain
$$
\Phi_0 + U^{-1}(\Phi_1) = \gamma\,.
$$
We come now to the last key point of the proof, namely, that one has
decay estimate
\begin{equation}\label{eq50}
|\gamma(x)| \leq \frac{C}{|x|^{n+1}},\quad |x|\geq 2\,.
\end{equation}
Since $\Phi_0$ and $\Phi_1$ are supported on $\overline B$ and
$U^{-1}(\Phi_1) = \lambda\,\Phi_1 + V(\Phi_1 )$, where $\lambda$ is
a real number and $V$ a smooth homogeneous Calder\'{o}n-Zygmund
operator, it is enough to show that $V(\Phi_1 )$ has the appropriate
behavior off the ball $B(0,2)$. Let $L$ be the kernel of $V$.
Regularizing $\Phi_1$ one checks that, for a fixed $x$ satisfying
$|x|\geq 2$,
\begin{equation}\label{eq51}
\begin{split}
V(\Phi_1 )(x) & = \langle \Phi_1, L(x-y)\rangle \\*[3pt] & =
\langle\Phi_1, L(x-y)-L(x)\rangle \,,
\end{split}
\end{equation}
where the latest identity follows from \eqref{eq49}. Since $\Phi_1 $
is a distribution supported on~$\overline B$ there exists a positive
integer $\nu$ and a constant $C$ such that
\begin{equation}\label{eq52}
|\langle\Phi_1,\varphi\rangle | \leq  C\sup_{|\alpha|\leq \nu}
\sup_{|y|\leq 3/2} |\partial^\alpha \varphi(y)|\,,
\end{equation}
for each infinitely differentiable function $\varphi$ on $\Rn$. The
kernel $L$ satisfies
$$
 |\frac{\partial^\alpha}{{\partial y}^\alpha} \left(
L(x-y)-L(x)\right)| \leq \frac{C_\alpha}{|x|^{n+1+|\alpha|}},\quad
|y| \leq 3/2\,,
$$
and hence by \eqref{eq51} and \eqref{eq52}
$$
|V(\Phi_1)(x)| \leq \frac{C}{|x|^{n+1}},\quad |x|\geq 2\,,
$$
which proves \eqref{eq50} and then completes the proof of the
sufficient condition in the general case.

\section{Proof of the necessary condition: the polynomial case }

We assume in this section that $T$ is a polynomial operator with
kernel
$$
K(x)=\frac{\Omega(x)}{|x|^n}=
\frac{P_2(x)}{|x|^{2+n}}+\frac{P_4(x)}{|x|^{4+n}}+\dotsb+\frac{P_{2N}(x)}{|x|^{2N+n}},\quad
x \neq 0\,,
$$
where $P_{2j}$ is a homogeneous harmonic polynomials of degree $2j$.
Let $Q$ be the  homogeneous polynomial of degree $2N$ defined by
$$
 Q(x)=  \gamma_2 \, P_2(x)|x|^{2N-2}+ \dotsb+ \gamma_{2j}\,
P_{2j}(x)|x|^{2N-2j}+\dotsb+ \gamma_{2N}\,P_{2N}(x) \,.
$$
Then
$$
\widehat{P.V.K}(\xi) = \frac{Q(\xi)}{|\xi|^{2N}},\quad \xi \neq 0\,.
$$
 Our assumption is now the $L^2$ estimate between $T^{\star}$ and $T$ (see $(ii)$ in the statement of
the Theorem). Since the truncated operator $T^1$ at level $1$ is
obviously dominated by $T^{\star}$, we have
$$
\int (T^1f)^2 (x)\,dx \leq \int (T^{\star}f)^2 (x)\,dx \leq C\,\int
(Tf)^2 (x)\,dx \,.
$$
The kernel of $T^1$ is (see \eqref{eq14})
\begin{equation}\label{eq53}
K(x)\,\chi_{\BC}(x)= T(b)(x) + S(x)\,\chi_{B}(x)\,,
\end{equation}
where $b$ is given in equation \eqref{eq11bis} and
$$
-S(x)= Q(\partial)(A_0+A_1\,|x|^2+\dotsb+
A_{2N-1}\,|x|^{4N-2})(x),\quad x \in \Rn \,.
$$
The reader may consult the beginning of section 3 to review the
context of the definition of $S$. In view of \eqref{eq53} we have,
for each $f  \in L^2(\Rn)$,
\begin{equation*}
\begin{split}
  \|S\,\chi_{B} \star f \|_2  & \leq C\, \|T^1 f \|_2 + \|b \star Tf \|_2 \\*[3pt]
& \leq C\, (\|Tf \|_2 +\|\widehat{b} \|_\infty \|Tf \|_2) \\*[3pt] &
= C\,\|Tf\|_2 \,.
\end{split}
\end{equation*}
By Plancherel, the above $L^2$ inequality translates into a
pointwise inequality between the Fourier multipliers, namely,
\begin{equation}\label{eq54}
|\widehat{S\,\chi_B}(\xi)| \leq C\, |\widehat{P.V. K}(\xi)| = C\,
\frac{|Q(\xi)|}{|\xi|^{2N}}\,.
\end{equation}
Our next goal is to show that \eqref{eq54} provides interesting
relations between the zero sets of $Q$ and the $P_{2j}$. For each
function $f$ on $\Rn$ set $Z(f)= \{x \in \Rn : f(x)=0 \}$.

\begin{lemanom}[Zero Sets Lemma]
$$
Z(Q) \subset Z(P_{2j}), \quad 1 \leq j \leq N \,.
$$
\end{lemanom}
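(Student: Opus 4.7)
The plan is to squeeze the inequality \eqref{eq54} at points where $Q$ vanishes. Taking Fourier transforms in the identity \eqref{eq53}, namely $K\chi_{\BC} = T(b) + S\chi_B$, gives
$$
\widehat{S\chi_B}(\xi) \;=\; \widehat{K\chi_{\BC}}(\xi) \;-\; \frac{Q(\xi)}{|\xi|^{2N}}\,\widehat{b}(\xi),
$$
so \eqref{eq54} forces $\widehat{K\chi_{\BC}}(\xi)=0$ at every $\xi \in Z(Q)$. Since $Z(Q)$ and each $Z(P_{2j})$ are cones, I would fix $\xi_0 \in S^{n-1}$ with $Q(\xi_0) = 0$ and, using homogeneity of $Q$, work along the whole ray: $\widehat{K\chi_{\BC}}(\lambda\xi_0) = 0$ for every $\lambda > 0$.

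Next I would decompose $\widehat{K\chi_{\BC}}$ ray-by-ray via spherical harmonics. Writing $x = r\omega$ and applying Hecke's identity to the inner sphere integral, the contribution of each higher order Riesz kernel $K_j(x) = P_{2j}(x)/|x|^{2j+n}$ becomes, after the substitution $u = r\lambda$ and the homogeneity of $P_{2j}$,
$$
\widehat{K_j\chi_{\BC}}(\lambda\xi_0) \;=\; c_{2j}\, P_{2j}(\xi_0)\, \Psi_{2j}(\lambda), \qquad \Psi_{2j}(\lambda) \;:=\; \int_\lambda^\infty \frac{J_{n/2+2j-1}(2\pi u)}{u^{n/2}}\, du,
$$
with $c_{2j}\neq 0$. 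Summing in $j$ reduces the lemma to the linear independence of $\Psi_2,\Psi_4,\dots,\Psi_{2N}$ as functions of $\lambda\in(0,\infty)$.

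For independence I would Taylor-expand near $\lambda = 0$. Since $J_\nu(t)/t^\nu$ is entire in $t^2$ with nonzero value at the origin, the derivative $\Psi_{2j}'(\lambda) = -J_{n/2+2j-1}(2\pi\lambda)/\lambda^{n/2}$ equals $-\lambda^{2j-1}$ times an even real-analytic function not vanishing at $0$. Integrating,
$$
\Psi_{2j}(\lambda) \;=\; \Psi_{2j}(0) \,+\, \alpha_j\,\lambda^{2j} \,+\, O(\lambda^{2j+2}),\qquad \alpha_j \neq 0,
$$
where $\Psi_{2j}(0)$ is finite by the estimate $J_{n/2+2j-1}(2\pi u)/u^{n/2} = O(u^{2j-1})$ near $0$ (as $j \geq 1$) together with the oscillatory decay $O(u^{-(n+1)/2})$ at infinity. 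Substituting into $\sum_{j=1}^N P_{2j}(\xi_0)\,\Psi_{2j}(\lambda) = 0$ and matching coefficients of $\lambda^{2}, \lambda^{4}, \dots, \lambda^{2N}$ in turn yields a triangular system that gives $P_2(\xi_0)=0$, then $P_4(\xi_0)=0$, and inductively $P_{2j}(\xi_0)=0$ for every $j$.

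The crux is the sharp vanishing order: $\Psi_{2j}$ begins at $\lambda^{2j}$ with a nonzero leading coefficient, while $\Psi_{2k}$ with $k > j$ vanishes strictly more. This triangular structure is exactly what drives the inductive cascade; everything else reduces to bookkeeping the constants in Hecke's formula and the Bessel asymptotics near $0$ and $\infty$.
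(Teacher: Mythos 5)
Your proof is correct, but it takes a genuinely different route from the paper's. The paper never leaves $\widehat{S\chi_B}$: it expands it along each ray as $\sum_p a_{2p}(\xi_0)\,r^{2p}$, shows inductively that $a_{2j}(\xi_0)=C_{2j}\,P_{2j}(\xi_0)$, and the whole burden falls on the non-vanishing of the constants $C_{2j}$ (Lemma 10), which consumes the explicit coefficients $A_L$, the constants $c_{l,j,k}$ and the triple-binomial identities of Section 7; moreover $P_{2N}(\xi_0)=0$ must be recovered separately from $Q(\xi_0)=0$, since the $C_{2j}$ are only produced for $j\le N-1$. You instead push the vanishing onto $\widehat{K\chi_{\BC}}$ via \eqref{eq53}: at $\xi\in Z(Q)$ both $\widehat{S\chi_B}(\xi)$ (by \eqref{eq54}) and $\widehat{T(b)}(\xi)=Q(\xi)\,|\xi|^{-2N}\,\widehat b(\xi)$ vanish, and $Z(Q)$ is a cone, so the vanishing holds along the whole ray; the Bochner--Hecke formula then turns each ray restriction into $c_{2j}\,P_{2j}(\xi_0)\,\Psi_{2j}(\lambda)$ with $\Psi_{2j}-\Psi_{2j}(0)$ vanishing to order exactly $2j$ at $\lambda=0$, the leading coefficient being a nonzero multiple of $1/\bigl(j\,\Gamma(2j+\frac{n}{2})\bigr)$ --- reassuringly the same quantity that appears in the paper's $C_{2j}$. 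The Taylor cascade kills all $P_{2j}(\xi_0)$, $1\le j\le N$, in one stroke, with no Section 7 combinatorics, and your ray coefficients are manifestly independent of $N$, which is the analogue of what the paper must prove separately (Lemma 11). What the paper's heavier computation buys is the closed formula for the $a_{2p}$'s, which is reused downstream: in the iterated division argument (the non-vanishing \eqref{eq58bis} at the last step) and in the bounds of Lemma 12 needed for the non-polynomial case; to run the rest of Sections 5 and 6 on your expansion one would have to redo that bookkeeping for the Bessel coefficients (it looks lighter, but it is not done here). Two small finishing touches: \eqref{eq54} and your Hecke identity are a priori almost-everywhere statements, so you should note that all the transforms involved are continuous off the origin before evaluating on a fixed ray (the paper does the same implicitly); and the displayed relation $\sum_{j=1}^{N}P_{2j}(\xi_0)\Psi_{2j}(\lambda)=0$ should carry the nonzero Hecke constants $c_{2j}$, which of course does not affect the cascade.
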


\begin{proof}
We know that $S$ has an expression of the form (see \eqref{eq21})
\begin{equation}\label{eq54bis}
S(x)= \sum_{l=N+1}^{2N-1} \sum_{j=1}^{l-N}
c_{lj}\,P_{2j}(x)\,|x|^{2(l-N-j)}\,.
\end{equation}
Since $\widehat{\chi_B} = G_m$, $m=n/2$, Lemma 3 yields
\begin{equation}\label{eq54tris}
\begin{split}
\widehat{S\,\chi_{B}}(\xi) & = S(\imath\, \partial)\,
\widehat{\chi_B}(\xi)\\*[5pt] & =  \sum_{l=N+1}^{2N-1}
\sum_{j=1}^{l-N}
c_{lj}\,(-1)^{l-N}\,\,P_{2j}(\partial)\,\,\triangle^{l-N-j}\,G_{m}(\xi)\\*[5pt]
& = \sum_{l=N+1}^{2N-1} \sum_{j=1}^{l-N} \sum_{k=0}^{l-N-j}
c_{\,l,\,j,\,k}\,P_{2j}(\xi)\,|\xi|^{2(l-N-j-k)}
G_{m+2l-2N-k}(\xi)\,.
\end{split}
\end{equation}
The function $G_p(\xi)$ is, for each $p \geq 0$, a radial function
which is the restriction to the real positive axis of an entire
function \cite[A-8]{Gr}. Set $\xi= r\, \xi_0$, $|\xi_0|=1$,
$r\geq0$. Then
\begin{equation}\label{eq55}
\widehat{S \chi_B}(r\xi_0)= \sum_{p=1}^{\infty}
a_{2p}(\xi_0)\,r^{2p} \,,
\end{equation}
and the power series has infinite radius of convergence for each
$\xi_0$. Assume now that $Q(\xi_0) = 0$. Then, by \eqref{eq54},
$\widehat{S \chi_B}(r\xi_0)= 0$ for each $r \geq 0$, and hence
$a_{2p}(\xi_0) = 0$, for each $p \geq 1$. For $p=1$ one has
$a_{2}(\xi_0) = P_2(\xi_0)\,C_2$, where
$$
C_2 = \sum_{l=N+1}^{2N-1} c_{\,l,\,1,\,l-N-1}\,G_{m+l-N+1}(0)\,.
$$
It will be shown later that $C_2 \neq 0$, and then we get
$P_2(\xi_0) =0 $. Let us make the inductive hypothesis that
$P_2(\xi_0)= \dotsb=P_{2(j-1)}(\xi_0)=0$. Then we obtain, if $j \leq
N-1$, $a_{2j}(\xi_0) = P_{2j}(\xi_0)\,C_{2j}$, where
\begin{equation}\label{eq55bis}
C_{2j} = \sum_{l=N+j}^{2N-1} c_{\,l,\,j,\,l-N-j}\,G_{m+l-N+j}(0)\,.
\end{equation}

Since we will show that $C_{2j} \neq 0$,  $P_{2j}(\xi_0) =0$, $1
\leq j \leq N-1$. We have
$$
0= Q(\xi_0)=  \sum_{j=1}^N \gamma_{2j} \, P_{2j}(\xi_0)\,,
$$
and so we also get $P_{2N}(\xi_0)=0$. Therefore the zero sets Lemma
is completely proved provided we have at our disposition the
following.

\begin{lemma}\label{L10}
$$
C_{2j} = \frac{- \pi^\n}{V_{n}\, 2^\n \, \Gamma(\n+1)} \;
\frac{(-1)^j }{j\, 4^{j} \, \Gamma (2j+\n)},\quad 1 \le j \le N-1
\,.
$$
\end{lemma}
\noindent The proof of Lemma 10 is lengthy and rather complicated
from the computational point of view, and so we postpone it to
section 7.
\end{proof}

Notice that, although the constants $C_{2j}$ are non-zero, they
become rapidly small  as the index $j$ increases.

The reason why Lemma 10 is involved is that one has to trace back
the exact values of the constants $C_{2j}$ from the very beginning
of our proof of \eqref{eq53}. This forces us to take into account
the exact values of various other constants. For instance, those
which appear in the expression of the fundamental solution of
$\triangle^N$ and the constants $A_0, A_1,\dotsc, A_{2N-1}$ in
formula \eqref{eq11}. Finally,  we need to prove some new identities
involving a triple sum of combinatorial numbers, in the spirit of
those that can be found in the book of R.\ Graham D. Knuth and O.\
Patashnik \cite{GKP}.

The following is  elementary folklore, but is proved here for the
reader's sake.

\begin{lemanom}[Dimension Lemma]
If $f$ is a real valued continuous function on $\Rn$\linebreak which
changes sign, then $H^{n-1}(Z(f)) > 0$, $H^{n-1}$ being Hausdorff
measure in di\-mension~~$n-1$. In particular, the Hausdorff
dimension of $Z(f)$ is at least $n-1$.
\end{lemanom}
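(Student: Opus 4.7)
The plan is to produce a set of zeros of $f$ that projects onto an open set in a hyperplane, and then use the standard fact that orthogonal projection does not increase Hausdorff measure.

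First I would pick a pair of points $a,b \in \Rn$ with $f(a) < 0 < f(b)$, which exists because $f$ changes sign. By continuity of $f$ there are open balls $U \ni a$ and $V \ni b$ such that $f < 0$ on $U$ and $f > 0$ on $V$. Set $e = (b-a)/|b-a|$ and let $H$ be the hyperplane through $a$ orthogonal to~$e$. For each $x$ in a sufficiently small open neighborhood $W \subset H$ of $a$, the parallel line $L_x = \{x + t e : t \in \mathbb{R}\}$ intersects both $U$ (near $t=0$) and $V$ (near $t=|b-a|$). Applying the one-variable intermediate value theorem to the continuous function $t \mapsto f(x+te)$ yields some $t_x \in \mathbb{R}$ with $f(x+t_x e) = 0$.

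Let $\pi : \Rn \to H$ denote orthogonal projection along $e$. The previous paragraph shows that for each $x \in W$ there is a zero of $f$ whose projection is $x$; equivalently,
$$
W \subset \pi(Z(f)).
$$
Since $\pi$ is $1$-Lipschitz, it does not increase $(n-1)$-dimensional Hausdorff measure, and therefore
$$
H^{n-1}(Z(f)) \;\geq\; H^{n-1}\bigl(\pi(Z(f))\bigr) \;\geq\; H^{n-1}(W) \;>\; 0,
$$
the last inequality holding because $W$ is a non-empty open subset of the hyperplane $H$, which carries the usual Lebesgue $(n-1)$-measure. The ``in particular'' assertion about Hausdorff dimension is then immediate from the definition.

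There is no real obstacle here: the only points that need a line of justification are (a) that the continuity of $f$ indeed produces open balls $U$ and $V$ on which $f$ retains its sign, and (b) that orthogonal projection is $1$-Lipschitz and hence Hausdorff-measure decreasing; both are standard. The argument is essentially a one-dimensional intermediate value theorem together with a Fubini-type slicing observation expressed through the Lipschitz property of projection.
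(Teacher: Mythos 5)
Your argument is correct and is essentially the paper's own proof: the paper normalizes so that $f$ has opposite signs near $0$ and near $p=(0,\dotsc,0,1)$, applies Bolzano's theorem along the vertical segments over a small disc in the hyperplane $\{x_n=0\}$, and concludes via the fact that the orthogonal projection of $Z(f)$ contains that disc. Your version merely keeps a general direction $e$ instead of normalizing and makes the $1$-Lipschitz (measure non-increasing) property of the projection explicit, which is a presentational difference only.
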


\begin{proof}
Assume, without loss of generality, that $f(0)> 0$ and $f(p)< 0$,
where $p=(0,\dotsc,0,1)$. For $\epsilon>0$ small enough we have
$f(x) > 0$ if $|x|< \epsilon$ and $f(x)< 0$ if $|x -p|< \epsilon$.
Set $B= \{x \in \Rn : x_n=0\quad {\text{and}}\quad |x|<\epsilon \}$.
Bolzano's theorem tells us that, for each $x \in B$, $f$ vanishes at
some point of the segment $(x_1,\dotsc,x_{n-1},t)$, $0 \leq t \leq
1$. Hence the orthogonal projection of the set $Z(f)$ onto the
hyperplane~$\{x:x_n =0\}$  contains $B$ and so $H^{n-1}(Z(f)) \geq
H^{n-1}(B)
>0$.
\end{proof}

We turn now our attention to an algebraic lemma which plays a key
role in obtaining the necessary condition we are looking for.

\begin{lemanom}[Division Lemma]\label{divisio}
Let $F$ and $G$ be polynomials in $\mathbb{R}[x_1,\dotsc,x_n]$.
Assume that $G$ is irreducible and that $H^{n-1}(Z(F)\cap Z(G)) >
0$. Then there exists a polynomial $H$ in
$\mathbb{R}[x_1,\dotsc,x_n]$ such that $F=G\,H$.
\end{lemanom}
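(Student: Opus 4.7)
The plan is to combine a real-analytic identity argument on the smooth part of $Z_\mathbb{R}(G)$ with Hilbert's Nullstellensatz over $\mathbb{C}$. The key intermediate goal is to show that $F$ vanishes identically on the complex variety $Z_\mathbb{C}(G)$; divisibility will then follow by standard algebra.

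First I would verify that $G$ is irreducible also over $\mathbb{C}$. If not, complex conjugation forces $G=c\,G_1\overline{G_1}$ with $G_1,\overline{G_1}$ non-proportional irreducible factors, and since any real zero of $G_1$ is also a zero of $\overline{G_1}$, one has $Z_\mathbb{R}(G)=Z_\mathbb{C}(G_1)\cap Z_\mathbb{C}(\overline{G_1})\cap\mathbb{R}^n$, which sits inside a complex variety of complex dimension at most $n-2$ and hence has real dimension at most $n-2$. This contradicts the hypothesis that $Z(F)\cap Z(G)\subset Z_\mathbb{R}(G)$ has positive $H^{n-1}$-measure.

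Second, I would exploit the $H^{n-1}$-hypothesis to find a point $p\in Z(F)\cap Z(G)$ which is simultaneously a smooth point of $Z_\mathbb{R}(G)$ and an $(n-1)$-dimensional density point of $Z(F)\cap Z(G)$ within the smooth $(n-1)$-manifold $Z_\mathbb{R}(G)$ near $p$; this is possible because the real singular locus of $Z_\mathbb{R}(G)$ is a semialgebraic set of dimension at most $n-2$, hence $H^{n-1}$-null. Near $p$, the analytic implicit function theorem parametrizes $Z_\mathbb{R}(G)$ as the graph of a real analytic function of $n-1$ variables, and the pullback of $F$ is a real analytic function vanishing on a set of positive Lebesgue measure in $\mathbb{R}^{n-1}$. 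The identity principle for real analytic functions then forces this pullback to vanish identically, so $F$ vanishes on a nonempty Euclidean open piece $\Omega$ of the smooth locus of $Z_\mathbb{R}(G)$.

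Finally, I would upgrade to vanishing on all of $Z_\mathbb{C}(G)$: if $F$ were not identically zero on the irreducible variety $Z_\mathbb{C}(G)$, then $\{F=0\}\cap Z_\mathbb{C}(G)$ would be proper Zariski-closed in $Z_\mathbb{C}(G)$, of complex dimension at most $n-2$, whose real trace has real dimension at most $n-2$ and so cannot contain the real $(n-1)$-dimensional set $\Omega$. Hence $F\equiv 0$ on $Z_\mathbb{C}(G)$, and Hilbert's Nullstellensatz combined with the primality of $(G)$ in $\mathbb{C}[x_1,\dots,x_n]$ yields $F=G\,H$ for some $H\in\mathbb{C}[x_1,\dots,x_n]$; since $F$ and $G$ have real coefficients and $\mathbb{R}[x_1,\dots,x_n]$ is a unique factorization domain, $H$ must be real as well. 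The main delicate step is the passage from ``positive $H^{n-1}$-measure'' to ``nonempty Euclidean open piece of the smooth locus''; once this is achieved, the dimension count converting real vanishing into complex vanishing, and the descent from $\mathbb{C}$ to $\mathbb{R}$, are routine.
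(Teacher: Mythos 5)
Your argument is correct, but it takes a longer route than the paper. The paper's proof is a direct two-step dimension count: if $V(G)\not\subset V(F)$ (complex hypersurfaces), then $V(F)\cap V(G)$ has complex dimension at most $n-2$, hence its real trace, which contains $Z(F)\cap Z(G)$, has real dimension at most $n-2$ and so is $H^{n-1}$-null --- contradicting the hypothesis; thus $V(G)\subset V(F)$, which gives $F=G\,H$ over $\mathbb{C}$, and $H$ is real because $F$ and $G$ are. In other words, the positive $H^{n-1}$-measure hypothesis is fed directly into the ``real dimension $\leq$ complex dimension'' bound, with no need for your intermediate step of locating a smooth density point of $Z_{\mathbb R}(G)$, parametrizing analytically, and invoking the identity principle for real-analytic functions to get vanishing of $F$ on an open piece of the hypersurface: that detour is sound but redundant, since the same dimension bound you use afterwards already yields the conclusion from the measure hypothesis alone. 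On the other hand, your proposal is more careful on one point the paper glosses over: when $G$ is irreducible over $\mathbb{R}$ but splits over $\mathbb{C}$ as $c\,G_1\overline{G_1}$, the blanket claim ``$V(G)\not\subset V(F)$ implies $\dim_{\mathbb C}\bigl(V(F)\cap V(G)\bigr)\leq n-2$'' needs an extra word (one conjugate component could lie in $V(F)$ while the other does not); your observation that in this case $Z_{\mathbb R}(G)=Z_{\mathbb C}(G_1)\cap Z_{\mathbb C}(\overline{G_1})\cap\mathbb{R}^n$ already has real dimension at most $n-2$, so the hypothesis rules it out, disposes of this cleanly and lets you work with a $\mathbb{C}$-irreducible $G$ throughout. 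Two small quibbles: the bound on the singular set $\{G=0,\ \nabla G=0\}$ should be justified (it follows, once $G$ is $\mathbb{C}$-irreducible, from coprimality of $G$ with a nonzero partial derivative and the same dimension count), and the realness of $H$ is most simply obtained by conjugating $F=G\,H$, rather than by appeal to unique factorization.
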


\begin{proof}
Denote by $V(P)$ the complex hyper-surface  $ \{z \in \mathbb{C}^n :
P(z) = 0 \}$ of a polynomial $P$. By hypothesis $V(F)\cap V(G)$ is
not empty. If $V(G)$ is not contained in $V(F)$ then the complex
dimension of $V(G)\cap V(F)$ is not greater than $n-2$ \cite[3.2
p.~131]{K}. Since the real dimension of a variety is less than or
equal to the complex dimension, we conclude that $Z(G)\cap Z(F)$ has
real dimension not greater than $n-2$, which contradicts the fact
that it has positive $n-1$-dimensional Hausdorff measure. Thus $V(G)
\subset V(F)$, and therefore $F=G\,H$ for some polynomial $H$ in
$\mathbb{C}[x_1,\dotsc,x_n]$. Since $F$ and $G$ have real
coefficients, the same happens to $H$.
\end{proof}

We proceed to the proof of the necessary condition.

Let $j_0$ be the first positive index such that $P_{2j_0}$ does not
vanish identically. We want to show that $P_{2j_0}$~divides $P_{2j}$
for $j_0 \leq j \leq N$.

Since $\mathbb{R}[x_1,\dotsc,x_n]$ is a unique factorization domain
we can express $P_{2j_0}$  as a product of irreducible factors, say
$R_k$, $1 \leq k \leq M$, which are also homogeneous. Clearly
$Z(P_{2j_0}) = \cup_k Z(R_k)$ and so
$$
Z(Q) =  \bigcup_k (Z(Q)\cap Z(R_k))\,.
$$
Since the integral of $Q$ on the sphere is $0$, $Q$ changes sign and
thus by the Dimension Lemma there is at least a $k$ such that
$H^{n-1}(Z(Q)\cap Z(R_k)) > 0$.  Change notation if necessary so
that $k=1$. Then $R_1$ divides $Q$, by the Division Lemma. We may
also apply the Division Lemma to $R_1$ and $P_{2j}$ for each $j_0
\leq j \leq N$, because  $Z(Q)\cap Z(R_1) \subset Z(P_{2j})\cap
Z(R_1) $ by the Zero Sets Lemma. Hence $R_1$ also divides~$P_{2j}$,
for $j_0 \leq j \leq N$. Set
$$
Q= R_1\,Q_1 \qquad  {\text{and}} \qquad P_{2j} = R_1\,
P_{2j,1},\quad j_0 \leq j \leq N\,,
$$
for certain homogeneous polynomials $Q_1$ and $P_{2j,1}$.

If $M=1$ we are done. Otherwise our intention is to repeat as many
times as we can the above division process. With this in mind we use
\eqref{eq55} to rewrite inequality~\eqref{eq54} in the form
\begin{equation}\label{eq56}
\left| \sum_{p=1}^{\infty} a_{2p}(\xi_0)\,r^{2p}\right| \leq C\,
|Q(\xi_0)| ,\quad 0< r \,.
\end{equation}
The definition of the coefficients $a_{2p}$ and \eqref{eq54tris}
show that there exist real numbers~$\mu_j(p)$ such that
$$
a_{2p}(\xi_0) = \sum_{j=j_0}^{N-1} \mu_j(p)\,P_{2j}(\xi_0)
$$
and so
\begin{equation}\label{eq57}
\begin{split}
 a_{2p}(\xi_0) & =   R_1(\xi_0) \,\sum_{j=j_0}^{N-1}
\mu_j(p)\,P_{2j,1}(\xi_0) \\
& =  R_1(\xi_0)\, a_{2p,1}(\xi_0) \,,
\end{split}
\end{equation}
where the last identity provides the definition of the numbers $
a_{2p,1}(\xi_0)$. We can simplify the common factor $R_1(\xi_0)$ in
\eqref{eq56} and get
\begin{equation}\label{eq58}
\left| \sum_{p=1}^{\infty} a_{2p,1}(\xi_0)\,r^{2p}\right| \leq C\,
|Q_1(\xi_0)| ,\quad 0< r \,.
\end{equation}
Equipped with \eqref{eq58}, we are ready to begin the second step in
the division process. If $Q_1(\xi_0)=0 $, then $a_{2p,1}(\xi_0) =
0$, for each $p \geq 1$. Hence, as in the proof of the Zero Sets
Lemma,
$$
Z(Q_1) \subset Z(P_{2j,1}), \quad j_0 \leq j \leq N \,.
$$
To apply the Division Lemma we need to ascertain that the zero set
of $Q_1$ is big enough and for that it suffices to show, by the
Dimension Lemma, that $Q_1$ changes sign. As we are assuming that
$M$ is greater than $1$, the degree of $R_1$ is less than $2j_0$.
Considering the expansions of $R_1$ and $Q$ in spherical harmonics,
we see that they are orthogonal in $L^2(d\sigma)$ \cite[p.~69]{St}.
Hence
$$
0= \int R_1(\xi)\,Q(\xi) \,d\sigma(\xi) = \int R_1^2(\xi)\,Q_1(\xi)
\,d\sigma(\xi)\,,
$$
which tells us that $Q_1$ changes sign.

Since $P_{2j_0,1} = \prod_{k=2}^M R_k $, we conclude that one of the
$R_k$, say $R_2$, divides the $P_{2j,1}$, $j_0 \leq j \leq N$. An
inductive argument gives that $P_{2j_0}$ divides the $P_{2j}$, $j_0
\leq j \leq N$. At the $k$-th step one should observe that $Q=
\prod_{l=1}^k R_l \,Q_k$ and
$$
0= \int \prod_{l=1}^{k}R_l(\xi)\,Q(\xi) \,d\sigma(\xi) = \int
\prod_{l=1}^{k}R_l^2(\xi)\,Q_k(\xi)  \,d\sigma(\xi)\,,
$$
so that $Q_k$ changes sign.  It is also important to remark that we
have
$$
a_{2p,k}(\xi_0) = \sum_{j=j_0}^{N-1} \mu_j(p)\,P_{2j,k}(\xi_0),\quad
1 \leq k \leq M\,.
$$
Thus at the $M$-th step we get for $p=j_0$
\begin{equation}\label{eq58bis}
a_{2j_0,M}(\xi_0) = C_{2j_0} \neq 0\,.
\end{equation}
We have $P_{2j}= P_{2j_0}\,Q_{2j-2j_0}$ for some homogeneous
polynomials $Q_{2j-2j_0}$ of degree~$2j-2j_0$ and so
\begin{equation*}
\begin{split}
 Q(\xi) & =    \sum_{j=j_0}^N   \gamma_{2j}\,
P_{2j}(\xi)|\xi|^{2N-2j} \\
& =  P_{2j_0}(\xi)\, \sum_{j=j_0}^N   \gamma_{2j}\,
Q_{2j-2j_0}(\xi)|\xi|^{2N-2j}  \,.
\end{split}
\end{equation*}
By \eqref{eq56} and the definition of the coefficients
$a_{2p,M}(\xi_0)$, for $|\xi_0|=1$ and $0<r$, we get
$$
\left| \sum_{p=j_0}^{\infty} a_{2p,M}(\xi_0)\,r^{2p}\right| \leq C
\left| \sum_{j=j_0}^N   \gamma_{2j}\,
Q_{2j-2j_0}(\xi_0)\right|,\quad 0< r \,.
$$
Taking into account \eqref{eq58bis} we conclude that
$$
\sum_{j=j_0}^N \gamma_{2j}\, Q_{2j-2j_0}(\xi_0) \neq 0,\quad
|\xi_0|= 1 \,,
$$
which completes the proof of the necessary condition in the
polynomial case.

\section{Proof of the necessary condition: the general case }

In this section the kernel of our operator has the general form
$K(x)=\Omega(x)/|x|^n$ with $\Omega$ a homogeneous function of
degree $0$, with vanishing integral on the sphere and of class
$C^\infty(S^{n-1})$. Then $\Omega(x) = \sum_{j\geq 1}^\infty
P_{2j}(x)/|x|^{2j}$ with $P_{2j}$ a homogeneous harmonic polynomial
of degree $2j$. The strategy consists in passing to the polynomial
case by looking at a partial sum of the series above. Set, for each
$N \geq 1$, $K_N(x)=\Omega_N(x)/|x|^n$, where $\Omega_N(x) =
\sum_{j= 1}^N P_{2j}(x)/|x|^{2j}$, and let $T_N$ be the operator
with kernel $K_N$. The difficulty is now that there is no obvious
way of obtaining the inequality
\begin{equation}\label{eq58tris}
\| T_N^{\star} f  \|_2 \leq  C \| T_N f \|_2,\quad f \in L^2(\Rn)
\,,
\end{equation}
from our hypothesis, namely,
$$
 \| T^{\star} f  \|_2 \leq  C \| T f \|_2,\quad f \in L^2(\Rn) \,.
$$
Instead we try to get \eqref{eq58tris} with $\| T_N f \|_2$ replaced
by $\| Tf \|_2$ in the right hand side plus an additional term which
becomes small as $N$ tends to $\infty$. We start as follows.
\begin{equation*}
\begin{split}
  \| T^1_Nf\|_2 & \leq  \| T^1f\|_2 + \| T^1f -T^1_N f\|_2  \\*[5pt]
&  \leq C\, \|Tf\|_2 +\| \sum_{j>N} \frac{P_{2j}(x)}{|x|^{2j+n}}\,
\chi_{\BC}* f\|_2  \,.
\end{split}
\end{equation*}
By \eqref{eq10} there exists a bounded function $b_j$ supported on
$B$ such that
$$
\frac{P_{2j}(x)}{|x|^{2j+n}}\, \chi_{\BC}(x) = P.V.
\frac{P_{2j}(x)}{|x|^{2j+n}}* b_j\,.
$$
By Lemma 8 (i) $\|\widehat{b_j}\|_{L^\infty(\Rn)}$ is bounded
uniformly in $j$ and then an application of Plancherel yields
\begin{equation*}
\begin{split}
\left  \| \sum_{j>N} \frac{P_{2j}(x)}{|x|^{2j+n}}\, \chi_{\BC}*
f\right\|_2 & = \left \| \sum_{j>N} P.V.
\frac{P_{2j}(x)}{|x|^{2j+n}}* b_j * f\right\|_2  \\*[5pt] &  \leq
C\,  \left( \sum_{j>N} \|P_{2j}\|_\infty \right) \,\|f\|_2\,,
\end{split}
\end{equation*}
where the supremum norm is taken on the sphere. By \eqref{eq53}
applied to $T_N$
$$
T^1_N f=K_N\, \chi_{\BC}* f= K_N * b_N * f + S_N\, \chi_B * f \,.
$$
Hence, for each $f$ in $L^2(\Rn)$\,,
\begin{equation*}
\begin{split}
 \| S_N\chi_B *f\|_2 & \le  \|T^1_N f\|_2 + \|K_N* f * b_N \|_2 \\*[5pt]
      &  \le C\| Tf\|_2 + C\, \left( \sum_{j>N} \|P_{2j}\|_\infty \right) \,\|f\|_2 + \| K_ N * f * b_N\|_2 \\*[5pt]
     &    \le C\| Tf\|_2+  \|Tf * b_N
     \|_2 + C \,\left( \sum_{j>N} \|P_{2j}\|_\infty \right) \,\|f\|_2 \\*[5pt]
     & \le C\| Tf\|_2 + C \,\left( \sum_{j>N} \|P_{2j}\|_\infty \right) \,\|f\|_2     \,,
\end{split}
\end{equation*}
where in the latest inequality Lemma 8 (i) was used. The above $L^2$
inequality translates, via Plancherel, into the pointwise estimate
 \begin{equation}\label{eq59}
| \widehat{S_N \chi_B} (\xi)| \le C| \widehat{P.V. K} (\xi) |+ C\,
\left( \sum_{j>N} \|P_{2j}\|_\infty \right), \quad \xi \neq 0 \,.
\end{equation}
The idea is now to take limits, as N goes to $\infty$, in the
preceding inequality. The remainder of the convergent series will
disappear and we will get a useful analog of~\eqref{eq54}. The first
task is to clarify how the left hand side converges.

Set $\xi= r\,\xi_0$, with $|\xi_0| = 1$ and $ r>0$. Rewrite
\eqref{eq55} with $S$ replaced by $S_N$ and $a_{2p}$ by $a_{2p}^N$:
$$
\widehat{S_N \chi_B}(r\xi_0)= \sum_{p=1}^{\infty} a_{2p}^N
(\xi_0)\,r^{2p} \,.
$$
It is a remarkable key fact that for a fixed $p$  the sequence of
the $a_{2p}^N$ stabilizes for $N$~large. This fact depends on a
laborious computation of various constants and will be proved in
section 7 in the following form.

\begin{lemma}\label{L11}
If $p+1 \leq N $, then $a_{2p}^N = a_{2p}^{p+1}$.
\end{lemma}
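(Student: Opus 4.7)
The plan is to compute $a_{2p}^N(\xi_0)$ in closed form and verify directly that the result is independent of $N$ once $N \geq p+1$. The first step is to start from the explicit expression \eqref{eq54tris} for $\widehat{S_N\chi_B}$ and substitute the standard power series
\[
G_\lambda(r)=\sum_{q\geq 0}\tau_{q,\lambda}\,r^{2q},
\]
with $\tau_{q,\lambda}$ read off from the usual Bessel expansion. Setting $\xi=r\xi_0$ with $|\xi_0|=1$, using $P_{2j}(\xi)=r^{2j}P_{2j}(\xi_0)$ and collecting the coefficient of $r^{2p}$ yields
\[
a_{2p}^N(\xi_0)=\sum_{j\geq 1}\Lambda_{p,j}^N\,P_{2j}(\xi_0),
\]
where $\Lambda_{p,j}^N$ is a triple sum over $(l,k,q)$ subject to $N+1\leq l\leq 2N-1$, $1\leq j\leq l-N$, $0\leq k\leq l-N-j$, and $q=p+N+k-l\geq 0$. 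A direct inspection of these constraints forces $j\leq p$, so for every admissible $N$ the sum reduces to a finite combination of the $P_{2j}$ with $1\leq j\leq p$.

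The next step is to make the $N$-dependence of the structural constants $c_{l,j,k}^N$ in \eqref{eq54tris} explicit. These inherit their $N$-dependence from the coefficients $A_0^N,\dots,A_{2N-1}^N$ of $\varphi$ in \eqref{eq11}, which in turn are determined by the $2N$ matching conditions of Corollary~\ref{C1} imposed on $\varphi$ and its first $2N-1$ radial derivatives at $|x|=1$. Using the explicit form of the standard fundamental solution $E$ of $\triangle^N$, this triangular linear system can be solved in closed form, giving the $A_i^N$ as explicit rational expressions in $i,n,N$ involving $\Gamma$-values. Substituting back and invoking Lemma~\ref{L3} together with \eqref{eq19} and \eqref{eq20} produces the $c_{l,j,k}^N$ as a known combination of $\Gamma$-factors, binomial coefficients and the $\gamma_{2j}$.

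At this stage the stabilization claim reduces to the combinatorial identity
\[
\Lambda_{p,j}^{N+1}=\Lambda_{p,j}^{N},\qquad N\geq p+1,\ 1\leq j\leq p,
\]
which I would prove by induction on $N$. For fixed $p,j$ the difference $\Lambda_{p,j}^{N+1}-\Lambda_{p,j}^N$ should organize into an alternating one-variable sum that vanishes by a Chu--Vandermonde type identity, or more generally by a hypergeometric summation formula in the spirit of \cite{GKP}. Following the authors' own methodology, I would first conjecture the precise form of the required identity by trying small $p$ in Maple, and then prove it rigorously by induction on one of the inner indices.

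The main obstacle is the combinatorial bookkeeping: both the \emph{ranges} of summation and the \emph{individual summands} defining $\Lambda_{p,j}^N$ depend on $N$, so the independence of $N$ is invisible term-by-term and only emerges after a heavy reorganization. All constants must be tracked with their exact values---losing a single $\Gamma$-factor or sign would destroy the cancellation---so the hardest single task is to derive a closed form for the $A_i^N$ compact enough to push through the Bessel expansion and yet detailed enough to make the final cancellation manifest.
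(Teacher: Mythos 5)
Your setup reproduces the paper's own reduction faithfully: the explicit formula for the coefficients $A_L$ of $\varphi$ obtained from the fundamental solution of $\triangle^N$ (the paper's Lemma~\ref{L13}), the resulting closed form \eqref{eq74} for the constants $c_{l,j,k}$, the substitution of the Bessel series \eqref{eq75} into \eqref{eq54tris}, and the observation that collecting the coefficient of $r^{2p}$ forces $1\le j\le p$ and eliminates one index. Up to that point you are on the same road as the authors. The genuine gap is that your proposal stops exactly where the real work begins: the mechanism by which $N$ disappears. You assert that the difference $\Lambda_{p,j}^{N+1}-\Lambda_{p,j}^{N}$ ``should organize into an alternating one-variable sum that vanishes by a Chu--Vandermonde type identity,'' to be conjectured in Maple and then proved by induction, but you neither exhibit that identity nor give any reason it has this shape. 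In fact $\Lambda_{p,j}^{N}$ is a \emph{double} sum (over $i$ and $s$) whose summands carry heavy $N$-dependence through $(N-j)!$, $\binom{N+\frac{n}{2}-1}{N-1}$, $\binom{N+s-1+\frac{n}{2}}{N-j}$, $(N-s-1)!$ and whose range in $s$ also depends on $N$, so the difference of consecutive $\Lambda$'s does not visibly collapse to a one-variable alternating sum, and term-by-term comparison between $N$ and $N+1$ is exactly the ``invisible'' cancellation you yourself flag as the obstacle.

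What actually closes this gap in the paper is not a Chu--Vandermonde evaluation but the closed-form summation of the inner sum over $s$ given in Lemma~\ref{L15}, whose proof requires the triple-binomial identity \eqref{eq74bis} (GKP (5.28)), the identities (5.24) and (5.14) of the same source, and an iterated reduction $D(j,m)=\frac{1}{\frac{n}{2}+j}\,D(j+1,m-1)$; only after inserting this evaluation does one arrive at the formula \eqref{eq78} for $a_{2p}^N$, in which all $N$-dependence has cancelled, and Lemma~11 follows by inspection rather than by an induction on $N$. So while your plan is structurally the same as the paper's (explicit constants plus a combinatorial identity), the decisive combinatorial input is missing and is substantially harder than the class of identities you invoke; without producing and proving an identity of at least the strength of Lemma~\ref{L15}, the argument does not go through.
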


If $p\geq1$ and $p+1 \leq N $ we set $a_{2p}= a_{2p}^N$. We need an
estimate for the $a_{2p}^N$, which will be proved as well in section
7.

\begin{lemma}\label{L12}
We have, for a constant $C$ depending only on $n$,
\begin{equation}\label{eq59bis}
|a_{2p}| \leq \frac{C}{(p-1)!\,4^p} \,\sum_{j=1}^p
\|P_{2j}\|_\infty, \quad 1 \le p \le N-1\,,
\end{equation}
and
\begin{equation}\label{eq59tris}
|a_{2p}^N | \le \frac{C}{4^p} \, \binom{\frac{n}{2}+N-1}
{N-1}\,\sum_{j=1}^{N-1} \|P_{2j}\|_\infty , \quad 1 < N \le p\,.
\end{equation}
\end{lemma}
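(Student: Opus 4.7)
The proof is a bookkeeping exercise: one extracts the Taylor coefficients of $\widehat{S_N\chi_B}(r\xi_0)$ in the variable $r$ from the explicit formula \eqref{eq54tris} and then estimates them. The starting point is the expansion
$$G_\lambda(r) \;=\; \sum_{s\geq 0}\frac{(-1)^s}{s!\,2^{2s+\lambda}\,\Gamma(s+\lambda+1)}\,r^{2s},$$
which, substituted into the formula of \eqref{eq54tris} with $S$ replaced by $S_N$ and $\xi$ by $r\xi_0$, produces a triple (actually quadruple, with $s$) sum. Collecting the powers of $r$, the constraint from demanding $r^{2p}$ forces $s = p-l+N+k$, which must be non--negative. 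One obtains the closed--form expression
$$a_{2p}^N(\xi_0) \;=\; \sum_{l,j,k}\,c_{l,j,k}\;\frac{(-1)^{\,s}}{s!\,2^{2p+m-k}\,\Gamma(p+l-N+m+1)}\;P_{2j}(\xi_0),$$
the sum running over $N+1\le l\le 2N-1$, $1\le j\le l-N$, $0\le k\le l-N-j$ with $s:=p-l+N+k\ge 0$. From this formula both bounds will be read off.

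For the first bound ($1\le p\le N-1$), Lemma~\ref{L11} permits replacing $N$ by $p+1$. Once $N=p+1$, the triple sum is finite with indices bounded in terms of $p$, so that only $\|P_{2j}\|_\infty$ with $1\le j\le p$ appear; the factor $2^{2p+m-k}$ yields $4^p$; and $\Gamma(p+l-(p+1)+m+1)=\Gamma(l+m)\ge \Gamma(p+m+2)\ge c_n\,(p-1)!$ delivers the $1/(p-1)!$. The constants $c_{l,j,k}$, which for $N=p+1$ depend only on $l,j,k,n$, are uniformly bounded. Assembling these estimates with $|P_{2j}(\xi_0)|\le \|P_{2j}\|_\infty$ gives \eqref{eq59bis}.

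For the second bound ($1<N\le p$) stabilization is unavailable and one must track the $N$-dependence of the $c_{l,j,k}$, which in turn come from Lemma~\ref{L3}, from \eqref{eq19}, and from the coefficients $A_i$ in \eqref{eq11} determined by the boundary--matching conditions of Corollary~\ref{C1}. The binomial factor in \eqref{eq19}, evaluated at the worst admissible $k$ (essentially $k=N-1$), produces $\binom{n/2+N-1}{N-1}$. Once that factor is pulled out of every $c_{l,j,k}$, the residual estimate is of the same shape as in the polynomial case of section~3, giving the $C/4^p$ and the sum $\sum_{j=1}^{N-1}\|P_{2j}\|_\infty$. The main obstacle is precisely this $N$-dependence step: isolating $\binom{n/2+N-1}{N-1}$ requires careful cancellations among the $c_{l,j,k}$, parallel to and somewhat overlapping with the combinatorial identities developed for Lemmas~\ref{L10} and \ref{L11}. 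Both bounds will therefore be proved together with those earlier lemmas in section~7.
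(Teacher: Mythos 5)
There is a genuine gap in both halves. For \eqref{eq59bis}, your scheme is: stabilize via Lemma~\ref{L11}, then bound the raw triple sum by (uniform bound on the $c_{l,j,k}$) $\times$ (worst-case value of the remaining factor), extracting $4^p$ from the power of $2$ and $1/(p-1)!$ from $\Gamma(l+\frac{n}{2})\ge\Gamma(p+\frac{n}{2}+2)$. The pivotal assertion that the $c_{l,j,k}$ (with $N=p+1$) are uniformly bounded is false: by the explicit formula \eqref{eq74} they contain the factor $\binom{N+\frac{n}{2}-1}{N-1}\,(N-j)!\,\binom{l-1+\frac{n}{2}}{N-j}$ divided by $(2N-l-1)!\,(l-N-j-k)!$, and for instance $c_{2N-1,1,0}$ is comparable to $\binom{N+\frac{n}{2}-1}{N-1}\binom{2N-2+\frac{n}{2}}{N-1}$, which grows like $4^{p}$ times a polynomial in $p$. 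So ``max of the $c$'s times min of the Gamma factor'' cannot deliver $C(n)/((p-1)!\,4^p)$; the large coefficients are only harmless because each one is paired with its own much larger denominators $s!\,\Gamma(l+\frac{n}{2})$, and that trade-off is precisely the bookkeeping you skipped. The paper avoids it altogether by estimating not the raw sum but the fully collapsed identity \eqref{eq78} produced in the proof of Lemma~\ref{L11}, where every term is manifestly at most $1$ and the prefactor already carries $1/(p!\,4^p)$; if you insist on the raw sum you must carry out the term-by-term pairing, which is a substantially longer computation than what you wrote.

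For \eqref{eq59tris} you do not give a proof at all — you defer it to the combinatorial section — and the guiding guess is off target. The binomial $\binom{\frac{n}{2}+N-1}{N-1}$ does not come from the binomial coefficient in \eqref{eq19} ``at the worst $k$'' (that route would produce something like $\binom{\frac{n}{2}+2N-2}{N-1}\sim 4^N$); it enters through the coefficients $A_L$ of the interior polynomial, computed in Lemma~\ref{L13}, and is already isolated in the closed form \eqref{eq74} for $c_{l,j,k}$, so no ``careful cancellations among the $c_{l,j,k}$'' are needed. After pulling out $\binom{N+\frac{n}{2}-1}{N-1}/4^p$ the paper's argument is a plain absolute-value estimate resting on two points you do not identify: the identity
$(N-j)!\binom{N+s-1+\frac{n}{2}}{N-j}\binom{\frac{n}{2}+j+s-1}{k}=\Gamma\bigl(s+\frac{n}{2}+N\bigr)/\bigl(k!\,\Gamma(s+\frac{n}{2}+j-k)\bigr)$,
and the inequality $\Gamma\bigl(s+\frac{n}{2}+N\bigr)/\Gamma\bigl(\frac{n}{2}+p+s+1\bigr)\le 1/(\frac{n}{2}+p+s)$, which is exactly where the hypothesis $N\le p$ is used; the residual double sum is then bounded by $e^2$. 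Without these steps the $N$-dependence is not under control, so the second estimate remains unproved. (Minor slip: collecting the power of $2$ gives $2^{2p+\frac{n}{2}+k}$, not $2^{2p+\frac{n}{2}-k}$.)
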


Let us prove that for each $\xi_0$ in the sphere the sequence
$S_N\,\chi_B (r\,\xi_0)$ converges uniformly on $0 \le r \le 1$. For
$1 \le N \le M$
\begin{equation*}
\begin{split}
 \left| \widehat{S_{N}\, \chi_{B}}(r \xi_{0}) - \widehat{S_{M}\, \chi_{B}}(r\xi_{0}) \right|
 & \le \sum_{p \ge N} | a_{2p}^{N}| r^{2p} + \sum_{p= N}^{M-1} | a_{2p}| r^{2p} + \sum_{p\ge M} | a_{2p}^{M}| r^{2p} \\*[5pt]
 & \le C \left( \frac{1}{4^N} \, \binom{\frac{n}{2}+N-1}
{N-1} +    \sum_{p\ge N} \frac{1}{(p-1)!\,4^p}
\right)\sum_{j=1}^{\infty} \|P_{2j}\|_\infty \,,
\end{split}
\end{equation*}
which clearly tends to $0$ as $N$ goes to $\infty$. Letting $N$ go
to $\infty$ in \eqref{eq59} we get
\begin{equation}\label{eq60}
\left |\sum_{p=1}^{\infty} a_{2p} (\xi_0)\,r^{2p}\right| \le C\,
|\widehat{P.V. K} (\xi_0)|, \quad 0 \leq r \leq 1, \quad |\xi_0| =
1\,.
\end{equation}
At this point we may repeat almost verbatim the proof we presented
in the previous section, because the coefficients $ a_{2p}^{N}$
stabilize. This allows us to argue as in the polynomial case. The
only difference lays in the fact that now we are dealing with
infinite sums. However, no convergence problems will really arise
because of \eqref{eq32}.

\section{Proof of the combinatorial Lemmata}

This section will be devoted to prove Lemmas 10, 11 and 12 stated
and used in the preceding sections.

For the proof of Lemma 10 (see section 5)  we need to carefully
trace back the path that led us to the constants $C_{2j}$. To begin
with we need a formula for the coefficients $A_l$ in \eqref{eq11}
and for that it is essential to have the expression for the
fundamental solution $E_N= E_N^n$ of $\triangle^N$ in $\Rn$. One has
\cite{ACL}
$$
E_N(x)=\frac{1}{|x|^{n-2N}}(\alpha(n,N)+\beta(n,N)\log|x|^2)\,,
$$
where $\alpha$ and $\beta$ are constants that depend on $n$ and $N$.
To write in close form $\alpha$ and~$\beta$ we consider different
cases. Let $\omega_n$ be the surface measure of $S^{n-1}$.

\vspace*{7pt}

\noindent {\sl Case 1:} $n$ is odd. Then
$$
\alpha(n,N)=
\frac{\Gamma(2-\frac{n}{2})}{4^{N-1}\,(N-1)!\,\Gamma(N+1-\frac{n}{2})\,(2-n)\,\omega_n}
$$
and
$$
\beta(n, N)=0\,.
$$

\vspace*{7pt}

\noindent {\sl Case 2:} $n$ is even, $n \ne 2$ and $N\le
\frac{n}{2}-1$. Then
$$
\alpha(n,N)=\frac{(-1)^{N-1}\,(\frac{n}{2}-N-1)!}{4^{N-1}(N-1)!\,(\frac{n}{2}-2)!\,(2-n)\,\omega_n}
$$
and
$$
\beta(n, N)=0\,.
$$

\vspace*{7pt}

\noindent {\sl Case 3:}   $n$ is even, $n \ne 2$ and $N\ge
\frac{n}{2}$. Then
$$
\beta(n,N)=\frac{1}{(-1)^{\frac{n}{2}+1}(N-\frac{n}{2})!\,4^{N-1}\,(N-1)!\,(\frac{n}{2}-2)!\,(2-n)\,\omega_n}
$$
and
$$
\alpha(n,N)=2\,\beta(n,N)\,S_{N-\frac{n}{2}}\,,
$$
where $S_0=0$ and
$$
S_L= \sum_{k=1}^{L} \frac{1}{2k}+ \sum_{k=\n}^{L+\n-1}\frac{1}{2k},
\quad 1 \le L\,.
$$

\vspace*{7pt}

\noindent {\sl Case 4:} $n=2.$
$$
\beta(2,N)= \frac{1}{2}\,\frac{1}{ 4^{N-1}(N-1)!^2\,\omega_2}
$$
and
$$
\alpha(2,N)=2\,\beta(2,N)\,S_{N-1}\,.
$$

Recall that the constants  $A_0, A_1,\dotsc, A_{2N-1}$ are chosen so
that the function (see \eqref{eq11})
\begin{equation}
\varphi(x)= E(x)\,\chi_{\BC}(x) + (A_0+A_1\,|x|^2 +\dotsb+
A_{2N-1}\,|x|^{4N-2})\,\chi_B(x)\,,
\end{equation}
and all its partial derivatives of order not greater than $2N-1$
extend continuously up to $\partial B$.

\begin{lemma}\label{L13}
For $L=N+1, \dotsc, 2N-1$ we have
$$
A_{L}= \frac{(-1)^{N+L}\displaystyle\binom{N+\n-1}{N-1}}{V_n\, 4^N
\, (L+\n-N)\,(2N-L-1)!\,L!}\,,
$$
where $V_n$ is the volume of the unit ball.
\end{lemma}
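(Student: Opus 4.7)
The plan is to reduce the problem to a Taylor expansion in one variable. Since $\varphi$ is radial, write $\varphi(x)=\Phi(r)$ with $r=|x|$ and set $u=r^{2}$, so the interior profile becomes the polynomial $p(u)=\sum_{L=0}^{2N-1}A_{L}u^{L}$ and the exterior profile becomes
$$G(u)=u^{s}\bigl(\alpha+\beta\log u\bigr),\qquad s:=N-\tfrac{n}{2}.$$
At $r=u=1$ the change of variable $r\mapsto u=r^{2}$ is smooth with lower-triangular, nonsingular Jacobian between jets in $r$ and jets in $u$. Matching $\Phi$ and its derivatives in $r$ up to order $2N-1$ across $\partial B$ is therefore equivalent to matching $p$ and $G$ and their derivatives in $u$ up to order $2N-1$ at $u=1$, so $p$ is uniquely determined as the order-$(2N-1)$ Taylor polynomial of $G$ at $u=1$.

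The plan is then to extract the coefficient of $u^{L}$ in this Taylor polynomial in a single uniform formula, using the \emph{differentiate-in-the-exponent} trick: since $u^{s+t}=\sum_{k\ge 0}\binom{s+t}{k}(u-1)^{k}$, one has $u^{s}=u^{s+t}|_{t=0}$ and $u^{s}\log u=\partial_{t}u^{s+t}|_{t=0}$. Expanding $(u-1)^{k}$ binomially, using $\binom{s}{k}\binom{k}{L}=\binom{s}{L}\binom{s-L}{k-L}$, and applying the alternating sum identity $\sum_{j=0}^{M}(-1)^{j}\binom{a}{j}=(-1)^{M}\binom{a-1}{M}$ with $a=s-L$ and $M=2N-1-L$, I arrive at the master formula
$$A_{L}=(-1)^{L+1}\bigl(\alpha+\beta\,\partial_{s}\bigr)\!\left[\binom{s}{L}\binom{s-L-1}{2N-1-L}\right]_{s=N-n/2}.$$

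For Cases~1 and~2 ($\beta=0$, so $s$ either non-integer or an integer $\ge L$), I would rewrite the two binomials via Gamma functions. The factor $\Gamma(s-L)/\Gamma(s-L+1)=1/(s-L)=-1/(L+n/2-N)$ produces the target denominator $(L+n/2-N)$, and the remaining ratio $\Gamma(s+1)/\Gamma(s-2N+1)$ is handled by the reflection formula $\Gamma(z)\Gamma(1-z)=\pi/\sin\pi z$ together with $\sin\pi(N+n/2)=(-1)^{N}\sin(\pi n/2)$. Combined with $V_{n}=\omega_{n}/n$ and $\Gamma(n/2+1)=(n/2)\Gamma(n/2)$, this produces the sign $(-1)^{N+L}$ and the binomial $\binom{N+n/2-1}{N-1}$ in the claimed formula.

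For Cases~3 and~4, $s_{0}:=N-n/2$ is a nonnegative integer with $s_{0}\le N-1<L$, so $\binom{s}{L}$ has a simple zero at $s=s_{0}$ which kills the $\alpha$-contribution, while $\binom{s-L-1}{2N-1-L}$ is regular and nonzero there. Hence only the $\beta$-derivative survives, and by isolating the vanishing factor $(s-s_{0})$ one gets
$$\partial_{s}\binom{s}{L}\Big|_{s_{0}}=\frac{(-1)^{L-1-s_{0}}\,s_{0}!\,(L-1-s_{0})!}{L!}.$$
Substituting the tabulated $\beta(n,N)$ and rearranging rising factorials via $\binom{N+n/2-1}{N-1}=\Gamma(N+n/2)/[(N-1)!\,\Gamma(n/2+1)]$ once again collapses to the same closed form. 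The main obstacle is not conceptual but bookkeeping: verifying that the reflection-formula path (non-integer $s$) and the derivative-of-vanishing-binomial path (integer $s$) produce identical output across all four tabulated values of $(\alpha,\beta)$. Once the master formula above is in place, each case becomes a direct Gamma-function manipulation.
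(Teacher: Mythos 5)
Your proposal is correct in substance and follows the same skeleton as the paper's proof: pass to the squared radius, identify $A_L$ as the coefficient of $t^L$ in the order-$(2N-1)$ Taylor polynomial at $t=1$ of $E(t)=t^{N-n/2}(\alpha+\beta\log t)$, and then evaluate case by case according to the four tabulated pairs $(\alpha,\beta)$. Where you genuinely differ is in the combinatorial core. The paper computes $E^{(i)}(1)$ directly, treats the power and logarithmic parts separately, and proves two summation identities (its \eqref{eq71} and \eqref{eq72}) by reindexing and the parallel-summation formula of Graham--Knuth--Patashnik, before substituting the explicit $\alpha(n,N)$, $\beta(n,N)$ in each case. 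You instead unify the two parts via $u^{s}\log u=\partial_s u^{s}$ and collapse the double sum with $\binom{s}{k}\binom{k}{L}=\binom{s}{L}\binom{s-L}{k-L}$ and $\sum_{j=0}^{M}(-1)^j\binom{a}{j}=(-1)^M\binom{a-1}{M}$; your master formula $A_L=(-1)^{L+1}(\alpha+\beta\partial_s)\bigl[\binom{s}{L}\binom{s-L-1}{2N-1-L}\bigr]_{s=N-n/2}$ is correct and is consistent with the paper's intermediate expressions, since by upper negation $(-1)^{L+1}\binom{s-L-1}{2N-1-L}=\binom{N+\frac{n}{2}-1}{2N-1-L}$ at $s=N-\frac{n}{2}$; likewise your value $\partial_s\binom{s}{L}\big|_{s_0}=(-1)^{L-1-s_0}s_0!\,(L-1-s_0)!/L!$ is right, and the $\partial_s$ trick makes the disappearance of the $\alpha$-contribution in Cases 3--4 transparent, which is a tidier organization than the paper's. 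One bookkeeping slip to repair: in Case 2 ($n$ even, $n\neq 2$, $N\le \frac{n}{2}-1$) the exponent $s=N-\frac{n}{2}$ is a \emph{negative} integer, not ``an integer $\ge L$'' as your parenthesis claims, so the reflection-formula route you describe degenerates there (it would involve $\sin(\pi n/2)=0$ and Gamma factors at poles); instead evaluate $\binom{s}{L}\binom{s-L-1}{2N-1-L}$ through falling factorials (or as a ratio limit), which meshes with the factorial form of $\alpha(n,N)$ in Case 2 and again yields the stated closed form. With that adjustment, each case reduces, as you say, to routine factorial simplification, and your argument constitutes a valid proof of the lemma.
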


\begin{proof}
Set $t=|x|^2$, so that
\begin{equation} \label{eq69}
E_N^n(x)\equiv E(t)=t^{N-\n}(\alpha+\beta\log(t))\,.
\end{equation}

Let $P(t)$ be the polynomial $\sum_{L=0}^{2N-1}A_Lt^L.$ By
Corollary~2 in section~2 we need that
$$
P^{k)}(1)=E^{k)}(1) ,  \quad  0 \le k \le 2N-1\,.
$$

By Taylor's expansion we have that
$P(t)=\sum_{i=0}^{2N-1}\frac{E^{i)}(1)}{i!}(t-1)^{i},$ and hence, by
the binomial formula applied to $(t-1)^i$,
$$
A_L=\sum_{i=L}^{2N-1}\frac{E^{i)}(1)}{i!}(-1)^{i-L}\binom{i}{L},
\quad  0 \le L \le 2N-1\,.
$$
Now we want to compute $E^{i)}(1)$.  Clearly
$$
\left( \frac{d}{dt}\right)^i (t^{N-\n})=\left(N-\n \right) \cdots
\left(N-\n-i+1\right)t^{N-\n-i}.
$$
Notice that the logarithmic term in \eqref{eq69} only appears when
the dimension  $n$ is even. In this case, for each $i\geq N+1$
$$
\left( \frac{d}{dt}\right)^i(t^{N-\n}\log t )=
\left(N-\n\right)!(-1)^{i-N+\n-1}\left(i-N+\n-1\right)!\;t^{-i+N-\n}\,.
$$
Hence,  for $i\geq N+1$, we obtain
\begin{multline*}
E^{i)}(1)=\alpha(n,N)\left(N-\n\right)\cdots\left(n-\n-i+1\right)\\*[5pt]
+\beta(n,N)\left(N-\n\right)!(-1)^{i-N+\n-1}\left(i-N+\n-1\right)!
\end{multline*}
Consequently,
\begin{equation} \label{eq70}
\begin{split}
A_L & =(-1)^L\alpha(n,N)\sum_{i=L}^{2N-1}\left(N-\n\right)\cdots
\left(N-\n-i+1\right)\frac{(-1)^i}{i!}\binom{i}{L}  \\*[5pt] &\quad+
(-1)^{L-N+\n-1}\beta(n,N)\left(N-\n\right)!
\sum_{i=L}^{2N-1}\left(i-N+\n-1\right)!\frac{\binom{i}{L}}{i!}\,.
\end{split}
\end{equation}
Let's remark that for the cases $n=2$ or $n$ even and $N\geq\n$ the
first term in \eqref{eq70} is zero, while for the cases $n$ odd or
$n$ even and $N\leq \n-1$ the second term is zero because
$\beta(n,N) = 0.$ This explains why we compute below the two terms
separately.

For the first term we show that
\begin{equation}\label{eq71}
\sum_{i=L}^{2N-1}\!\left(N\!-\!\n\right)\cdots
\left(N-\n-i+1\right)\frac{(-1)^i}{i!} \binom{i} {L}\! =\! (-1)^L
\!\binom{N-\n}{L}\!\binom{\n+N-1}{2N-1-L}\,.\!\!
\end{equation}
Indeed, the left hand side of \eqref{eq71} is, setting $k=i-L$,
\begin{equation*}
\begin{split}
\frac{1}{L!}\sum_{k=0}^{2N-1-L}&\left(N-\n\right)\cdots\left(N-\n-L-k+1\right)\frac{(-1)^{L+k}}{k!}\\*[7pt]
& = (-1)^L\binom{N-\n}{L}
\sum_{k=0}^{2N-1-L}\binom{\n+L-N+k-1}{k}\\*[7pt] & = (-1)^L
\binom{N-\n}{L}\binom{N+\n-1}{2N-1-L}\,,
\end{split}
\end{equation*}
where the last identity comes from (\cite[(5.9), p.~159]{GKP}).

To compute the second term we first show that
\begin{equation}\label{eq72}
\sum_{i=L}^{2N-1}\left(i-N+\n-1\right)!\frac{1}{i!}\binom{i}{L}=
\frac{(L-N+\n-1)!}{L!}\binom{N+\n-1}{2N-1-L}\,.
\end{equation}
As before, setting $k=i-L$ and applying \cite[(5.9), p.~159]{GKP},
we see that the left hand side of \eqref{eq72} is
\begin{equation*}
\begin{split}
 \frac{1}{L!}\sum_{k=0}^{2N-1-L}&\left(L+k-N+\n-1\right)!\frac{1}{k!} \\*[5pt]
&=  \left(L-N+\n-1\right)! \sum_{k=0}^{2N-1-L}\binom{\n+L-N+k-1}{k}
\\*[5pt] &= \frac{(L-N+\n-1)!}{L!}\binom{N+\n-1}{2N-1-L}\,.
\end{split}
\end{equation*}

We are now ready to complete the proof of the lemma distinguishing
$4$~cases.

\vspace*{7pt}

\noindent {\sl Case 1:} $n$ odd.

Since $\beta(n,N)=0$, replacing in \eqref{eq70} $\alpha(n,N)$ by its
value and using \eqref{eq71} we get, by elementary arithmetics and
the identity $n\,V_n = \omega_n$,
\begin{equation*}
\begin{split}
A_L&=
 \frac{(-1)^L
\Gamma(2-\n)}{4^{N-1}(N-1)!\Gamma(N+1-\n)(2-n)\omega_n}(-1)^L
\binom{N-\n}{L} \binom{\n+N-1}{2N-1-L} \\*[7pt] &=
\frac{(-1)^{N+L}(N+\n-1)\cdots (\n+1)}{4^N \,V_n\,
L!\,(2N-1-L)!\,(L+\n-N)(N-1)!} \\*[7pt] &=
\frac{(-1)^{N+L}\binom{N+\n-1}{N-1}}{4^N \,V_n\,
L!\,(2N-1-L)!\,(L+\n-N)}\,.
\end{split}
\end{equation*}

\vspace*{7pt}

\noindent {\sl Case 2:} $n$ even, $n\neq 2$ and $N\le \n-1$.

As in case 1 $\beta(n,N)=0$, and we proceed similarly using
\eqref{eq71} to obtain
\begin{equation*}
\begin{split}
A_L&=
\frac{(-1)^L(-1)^{N-1}(\n-N-1)!}{4^N(N-1)!(\n-2)!(2-n)\omega_n}(-1)^L
\binom{N-\n}{L}\binom{N+\n-1}{2N-1-L}\\*[7pt] &=
\frac{(-1)^{N+1}\binom{N+\n-1}{N-1}\n
!(\n-N-1)!\{(N-\n)\cdots(N-\n-L+1)\}}{(2N-1-L)!\,
L!\,4^{N-1}\,(2-n)\,\omega_n \,(\n-2)!\,(\n-N+L)!}\\*[7pt] &=
\frac{(-1)^N\binom{N+\n-1}{N-1}}{4^N \,V_n\,
L!\,(2N-1-L)!}\,\frac{(-1)L(\n-N+L-1)!}{(\n-N+L)!}\\*[7pt] &=
\frac{(-1)^{N+L}\binom{N+\n-1}{N-1}}{4^N \,V_n\,
L!\,(2N-1-L)!\,(L+\n-N)}\,.
\end{split}
\end{equation*}

\vspace*{7pt}

\noindent {\sl Case 3:} $n$ even, $n\neq 2$ and $N\ge \n.$

Replacing in \eqref{eq70} $\alpha(n,N)$ and $\beta(n,N)$ by their
values and using \eqref{eq72} we get, by elementary arithmetics and
the identity $n\,V_n = \omega_n$,
\begin{equation*}
\begin{split}
A_L&=\beta(n,N)(N-\n)!(-1)^{N+\n-1+L}\sum_{i=L}^{2N-1}(i-N+\n-1)!\frac{1}{i!}
\binom{i}{L} \\*[7pt]
&=\frac{(-1)^{N+L}(L-N+\n-1)!}{L!4^{N-1}(\n-2)!(N-1)!(2-n)\omega_n}\binom{N+\n-1}{2N-1-L}\\*[7pt]
&=  \frac{(-1)^{L+N}\n(\n-1)}{4^{N-1}(2-n)\omega_n
L!(2N-1-L)!(\n-N+L)}\binom{N+\n-1} {N-1}   \\*[7pt] &=
\frac{(-1)^{L+N}\binom{N+\n-1}{N-1}}{4^NL!V_n(2N-1-L)!(\n-N+L)}\,.
\end{split}
\end{equation*}

\vspace*{7pt}

\noindent {\sl Case 4:} $n=2.$

Proceeding  as in case 3 and we obtain
\begin{equation*}
\begin{split}
A_L&=\beta(n,N)(N-\n)!(-1)^{N+\n-1+L}\sum_{i=L}^{2N-1}(i-N+\n-1)!\frac{1}{i!}
\binom{i}{L}\\*[7pt] &=
\frac{(-1)^{N+L}N!}{2\omega_24^{N-1}L!(N-1)!(2N-1-L)!(L+1-N)}\\*[7pt]
&= \frac{(-1)^{L+N}\binom{N}{N-1}}{V_24^NL!(2n-1-L)!(L+1-N)}\,.
\end{split}
\end{equation*}
\end{proof}

\begin{proof}[Proof of Lemma \ref{L10}]
Recall that (see \eqref{eq55bis})
$$
C_{2j} = \sum_{l=N+j}^{2N-1} c_{\,l,\,j,\,l-N-j}\,G_{\n+l-N+j}(0)\,.
$$
Thus, we have to compute the constants $c_{l,j,k}$ appearing in the
expression~\eqref{eq54tris} for~$\widehat{S\,\chi_{B}}(\xi)$. For
that we need the constants $c_{l,j}$ appearing in the formula
\eqref{eq21} for $S(x)$. We start by computing
$P_{2j}(\partial)\Delta^{N-j} (|x|^{2l})$. Using \eqref{eq19}
 and Lemma 4 one gets
\begin{equation*}
\begin{split}
P_{2j}(\partial)\Delta^{N-j} (|x|^{2l}) = \frac{4^N l!
(N-j)!}{(l-N-j)!}\binom{l-1+\n}{N-j} P_{2j}(x) |x|^{2(l-N-j)}
\end{split}
\end{equation*}
if $l-N-j\ge 0$ (and $=0$ if $l-N-j < 0$).

As in  \eqref{eq24} (Section 3), we want to compute
$P_{2j}(\partial)\Delta^{l-N-j}G_{\n}(\xi)$ by using Lemma \ref{L2}
applied to $f(r)=G_\n(r)$ and the homogeneous polynomial $L(x)=
P_{2j}(x)\,|x|^{2(l-N-j)}$. We obtain
\begin{equation*}
\begin{split}
P_{2j}(\partial)\Delta^{l-N-j}G_{\n}(\xi) &  =
(-1)^{2(l-N)}\sum_{k\ge 0}\frac{(-1)^k}{2^k k!}\Delta^{k} (
P_{2j}(x) | x|^{2(l-N-j}  )\, G_{\n+2(l-N)-k} (\xi)\\*[7pt]
 & = \sum_{k=0}^{l-N-j} \frac{(-1)^k}{2^k k!}  4^{k}\frac{ (l-N-j) !}{(l-N-j-k)!} k!
\binom{\n+j+l-N-1}{k} \\*[7pt]
 & \quad \times  P_{2j}(\xi) | \xi|^{2(l-N-j-k)}  \, G_{\n+2(l-N)-k}(\xi) \,.
\end{split}
\end{equation*}

In view of the definitions of $Q(x)$ and $S(x)$,
\begin{equation*}
\begin{split}
S(x) & = -Q(\partial)\left (  \sum_{l=0}^{2N-1}A_{l}|x|^{2l} \right
) = -  \sum_{l=0}^{2N-1}A_{l} \sum_{j=1}^N \gamma_{2j} P_{2j }
(\partial) \Delta^{N-j}(|x|^{2l}) \\*[7pt] & = - \sum_{l=N+1}^{2N-1}
\sum_{j=1}^{l-N} A_{l} \gamma_{2j} \frac{4^N l!
(N-j)!}{(l-N-j)!}\binom{l-1+\n}{N-j} P_{2j}(x)
|x|^{2(l-N-j)}\\*[7pt] & = \sum_{l=N+1}^{2N-1} \sum_{j=1}^{l-N}
c_{l,j}P_{2j}(x) |x|^{2(l-N-j)}\,,
\end{split}
\end{equation*}
where the last identity defines the $c_{l,j}$. In \eqref{eq54tris}
we set
\begin{equation*}
\begin{split}
\widehat{S\,\chi_{B}}(\xi) & = S(\imath\, \partial)\,
\widehat{\chi_B}(\xi)\\*[7pt] & =  \sum_{l=N+1}^{2N-1}
\sum_{j=1}^{l-N}
c_{lj}\,(-1)^{l-N}\,\,P_{2j}(\partial)\,\triangle^{l-N-j}\,G_{\n}(\xi)\\*[7pt]
& = \sum_{l=N+1}^{2N-1} \sum_{j=1}^{l-N} \sum_{k=0}^{l-N-j}
c_{\,l,\,j,\,k}\,\,P_{2j}(\xi)\,|\xi|^{2(l-N-j-k)}
G_{\n+2l-2N-k}(\xi)\,.
\end{split}
\end{equation*}
Consequently,
\begin{equation*}
\begin{split}
c_{l,j,k} & =c_{l,j} (-1)^{l-N} \frac{(-1)^k}{2^k}  4^{k}\frac{
(l-N-j) !}{(l-N-j-k)!} \binom{\n+j+l-N-1}{k} \\*[7pt] & =
-(-1)^{l+k+N}A_{l} \gamma_{2j} \frac{4^N l!
(N-j)!}{(l-N-j)!}\binom{l-1+\n}{N-j}\\*[7pt] &\quad\times
2^{k}\frac{ (l-N-j) !}{(l-N-j-k)!} \binom{\n+j+l-N-1}{k}\,.
\end{split}
\end{equation*}
Replacing $A_l$ by the formula given in lemma \ref{L13} and
performing some easy arithmetics we get
\begin{equation}\label{eq74}
c_{l,j,k}= - \frac{(-1)^{k}}{V_{n}}\frac{\gamma_{2j}\!\displaystyle
\binom{N+\n-1}{N-1}  2^k (N-j)! \binom{l-1+\n}{N-j}\!
\binom{\n+j+l-N-1} {k} } {(l+\n-N)(2N-l-1)!  (l-N-j-k)! } \, .\!
\end{equation}

\pagebreak

The final computation of the $C_{2j}$ is as follows.
{\allowdisplaybreaks
\begin{align*}
C_{2j} &  = \sum_{l=N+j}^{2N-1} c_{\,l,\,j,\,l-N-j}\,G_{\n+l-N+j}(0)\\[9pt]
& = \hspace{1.5cm} \text{[by the explicit value of $G_p(0)$ given in
\eqref{eq75} below]}\\[9pt]
& =  \sum_{l=N+j}^{2N-1} c_{\,l,\,j,\,l-N-j}
\frac{1}{2^{\n+l-N+j}\Gamma
(\n+l-N+j+1)}\\[9pt]
& = \hspace{1.5cm} \text{[by \eqref{eq74}]}\\[9pt]
& =-  \sum_{l=N+j}^{2N-1}\! \frac{(-1)^{l-N-j}}{V_{n}}
\frac{\gamma_{2j}\!\displaystyle\binom{N\!+\!\n\!-\!1}{N\!-\!1}  2^{l-N-j} (N\!-\!j)! \binom{l\!-\!1\!+\!\n}{N-j} \binom{\n\!+\!j\!+\!l\!-\!N\!-\!1}{l-N-j} } {(l+\n-N)(2N-l-1)!  2^{\n+l-N+j}\Gamma(\n+l-N+j+1)} \\[9pt]
&= \hspace{1.5cm} \text{[substituting  the value given in
\eqref{eq7} in
$\gamma_{2j}$]}\\[9pt]
& =- \frac{ \pi^\n \Gamma(j) \displaystyle\binom{N+\n-1}{N-1} (N-j)! }{ V_{n} \Gamma(j+\n) 2^{\n+2j}  }\\[9pt]
&\hspace{1.5cm}\times
\sum_{l=N+1}^{2N-1}\frac{(-1)^{l+N}\displaystyle \binom{l-1+\n}{N-j}
\binom{\n+j+l-N-1}{l-N-j} } {(l+\n-N)(2N-l-1)!  \Gamma(\n+l-N+j+1)}\\[9pt]
&=  \hspace{1.5cm}  [\text{setting }  l=i+N+j]  \\[9pt]
& =- \frac{ \pi^\n \Gamma(j) \displaystyle\binom{N+\n-1}{N-1} (N-j)! }{ V_{n} \Gamma(j+\n) 2^{\n+2j}  }  \\[9pt]
& \hspace{1.5cm}\times \sum_{i=0}^{N-1-j}\frac{(-1)^{i+j}\displaystyle \binom{i+N+j-1+\n}{N-j} \binom{\n+2j+i-1}{i} } {(i+j+\n)(N-i-j-1)!  \Gamma(\n+i+2j+1)}\\[9pt]
&  = \hspace{1.5cm} [\text{because }   \Gamma(\n+i+2j+1)= \Gamma(2j +\n) \prod_{k=0}^{i} (\n+2j+i-k) ]\\[9pt]
& =- \frac{ (-1)^{j}\pi^\n \Gamma(j) \displaystyle\binom{N+\n-1}{N-1} (N-j)! }{ V_{n} \Gamma(j+\n) 2^{\n+2j} \Gamma(2j +\n) }\\[9pt]
& \hspace{1.5cm} \times \sum_{i=0}^{N-1-j}\frac{(-1)^{i}
\displaystyle \binom{i+N+j-1+\n}{N-j} \binom{\n+2j+i-1}{i} }
 {(i+j+\n)(N-i-j-1)!  \prod_{k=0}^{i} (\n+2j+i-k)}\\[9pt]
& =  \hspace{1.5cm} [\text{using Lemma \ref{sublema2} below}]\\[9pt]
&=- \frac{ (-1)^{j}\pi^\n \Gamma(j) \displaystyle\binom{N+\n-1}{N-1} (N-j)! }{ V_{n} \Gamma(j+\n) 2^{\n+2j} \Gamma(2j +\n) } \binom{N-1}{j-1}\frac{\Gamma (j+\n)}{j\, \Gamma (N+\n)}\\[9pt]
& = \frac{- \pi^\n}{V_{n} 2^\n \Gamma(\n+1)} \; \frac{(-1)^j }{j\,
4^{j} \Gamma (2j+ \n)} \,.
\end{align*}}
\end{proof}

\begin{lemma}\label{sublema2}
For each $j=1, \dotsc , N-1$
$$
\sum_{i=0}^{N-1-j}\frac{(-1)^{i}\displaystyle
\binom{i+N+j-1+\n}{N-j} \binom{\n+2j+i-1} {i} } {(i+j+\n)(N-i-j-1)!
\prod_{k=0}^{i} (\n+2j+i-k)} = \binom{N-1}{j-1}\frac{\Gamma
(j+\n)}{j \Gamma (N+\n)}\,.
$$
\end{lemma}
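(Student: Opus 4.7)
The strategy is to recognize the left-hand side as a Saalsch\"utzian ${}_3F_2$ hypergeometric series at argument $1$ and then to evaluate it by the Pfaff--Saalsch\"utz identity. Setting $m := \n$ for brevity, the first step is the algebraic simplification
\[
\frac{\binom{m+2j+i-1}{i}}{\prod_{k=0}^{i}(m+2j+i-k)} = \frac{\Gamma(m+2j+i)/\bigl(i!\,\Gamma(m+2j)\bigr)}{\Gamma(m+2j+i+1)/\Gamma(m+2j)} = \frac{1}{i!\,(m+2j+i)}.
\]
Combined with $\binom{i+N+j-1+m}{N-j} = (i+2j+m)_{N-j}/(N-j)!$ and the notation $s := N-1-j$, this reduces the left-hand side to
\[
\frac{1}{(N-j)!}\sum_{i=0}^{s}\frac{(-1)^i\,(i+2j+m+1)_s}{(s-i)!\,i!\,(i+j+m)}.
\]

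The second step recasts each factor in Pochhammer form. Using $(-1)^i/[(s-i)!\,i!] = (-s)_i/(s!\,i!)$, the factorization $(i+2j+m+1)_s = (2j+m+1)_s\,(2j+m+s+1)_i/(2j+m+1)_i$, and the identity $1/(i+j+m) = (j+m)_i/[(j+m)(j+m+1)_i]$, the sum becomes
\[
\frac{(2j+m+1)_s}{(N-j)!\,s!\,(j+m)}\;
{}_3F_2\!\bigl(-s,\,2j+m+s+1,\,j+m\,;\,2j+m+1,\,j+m+1\,;\,1\bigr).
\]
A direct check confirms the Saalsch\"utz balance condition $c+d = 1+a+b-n$: both sides equal $3j+2m+2$.

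The third step applies the Pfaff--Saalsch\"utz summation
${}_3F_2(-n,a,b;c,d;1) = (c-a)_n(c-b)_n/\bigl[(c)_n(c-a-b)_n\bigr]$
with $n=s$, $a=2j+m+s+1$, $b=j+m$, $c=2j+m+1$, $d=j+m+1$. The four Pochhammer factors evaluate explicitly as $(c-a)_s = (-s)_s = (-1)^s s!$, $(c-b)_s = (j+1)_s = \Gamma(j+s+1)/\Gamma(j+1)$, $(c)_s = \Gamma(2j+m+s+1)/\Gamma(2j+m+1)$, and $(c-a-b)_s = (-s-j-m)_s = (-1)^s\,\Gamma(j+m+s+1)/\Gamma(j+m+1)$. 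On substituting, $(-1)^s$ cancels, $(2j+m+1)_s$ cancels against the prefactor, and, after replacing $s$ with $N-1-j$, the surviving Gamma ratios collapse to $\binom{N-1}{j-1}\,\Gamma(j+m)/[j\,\Gamma(N+m)]$, which matches the right-hand side.

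The main obstacle is the bookkeeping in the second step: one must correctly identify the three upper and two lower Pochhammer parameters and verify the Saalsch\"utz balance. Once this identification is made, the remainder is a mechanical application of a classical hypergeometric summation theorem. Induction on $N$ via the Pascal-type recurrence on $\binom{i+N+j-1+m}{N-j}$ is a viable alternative but leads to a messier case analysis.
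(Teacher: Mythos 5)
Your proof is correct and is in essence the same argument as the paper's: the triple-binomial identity (5.28) of Graham--Knuth--Patashnik that the paper invokes is exactly the Pfaff--Saalsch\"utz summation you use, so both proofs reduce the sum to the same classical identity, yours phrased in ${}_3F_2$/Pochhammer language and the paper's in binomial-coefficient form (after the substitution $\binom{a+i}{i}=(-1)^i\binom{-a-1}{i}$). The algebraic reductions, the balance check, and the final collapse of Gamma factors in your write-up all check out.
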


\begin{proof}
Divide the left hand side by the right hand side and denote the
quotient by~$A$. We have to prove that $A=1$.
Using elementary arithmetics
\begin{multline*}
\frac{\displaystyle \binom{i+N+j-1+\n}{N-j}
\binom{\n+2j+i-1}{i}\Gamma(N+\n) }{(i+j+\n)  \prod_{k=0}^{i}
(\n+2j+i-k)\; \Gamma (j+\n)} \\*[9pt] =
\binom{N+i+j+\n-1}{N-j-1}\binom{N+\n-1}{N-i-j-1}\frac{(N-i-j-1)!}{N-j}
\binom{\n+i+j-1} {i}\,,
\end{multline*}
and so
\begin{equation*}
\begin{split}
A &=  \frac{j}{\binom{N-1}{j-1}(N-j)}\! \sum_{i=0}^{N-1-j}\!
(-1)^{i}\!
\binom{N+i+j+\n-1}{N-j-1}\!\binom{N+\n-1}{N-i-j-1}\!\binom{\n+i+j-1}{i}\\*[7pt]
& = \hspace{1.5cm} \left[\text{because } \binom{a+i}{i}=
(-1)^{i}\binom{-a-1}{i} \right]\\*[7pt] &=
\frac{j}{\binom{N-1}{j-1}(N-j)} \sum_{i=0}^{N-1-j}
\binom{N+i+j+\n-1}{N-j-1}\binom{N+\n-1}{N-i-j-1}\binom{-\n-j}{i}\\*[7pt]
& =  [\text{by the triple-binomial identity (5.28) of
(\cite[p.~171]{GKP}), see below}]\\*[7pt] & =
\frac{j}{\binom{N-1}{j-1}(N-j)}
\binom{\n+N+j-1}{0}\binom{N-1}{N-j-1} =1 \,.
\end{split}
\end{equation*}
For the reader's convenience and later reference we state the
triple-binomial identity~\cite[(5.28), p.~171]{GKP}:
\begin{equation}\label{eq74bis}
\sum _{k=0}^{n} \binom{m-r+s}{k}\binom{n+r-s}{n-k}\binom{r+k}{m+n} =
\binom{r}{m}\binom{s}{n}\,,
\end{equation}
where $m$ and $n$ are non-negative integers.
\end{proof}

Our next task is to prove Lemma \ref{L11} and Lemma \ref{L12}.
Setting $\xi= r\,\xi_0$, $|\xi_{0}|=1$, in \eqref{eq54tris} we
obtain
\begin{equation*}
\begin{split}
\widehat{S_{N}\,\chi_{B}}(r\xi_{0}) & = \sum_{l=N+1}^{2N-1}
\sum_{j=1}^{l-N} \sum_{k=0}^{l-N-j} c_{\,
l,\,j,\,k}\,\,P_{2j}(r\xi_{0})\,\,|r\xi_{0}|^{2(l-N-j-k)}
G_{\n+2l-2N-k}(r\xi_{0})\\*[5pt] & \hspace{1.5 cm} [\text{make the
change of indexes  $l=N+s$}]\\*[5pt] & = \sum_{s=1}^{N-1}
\sum_{j=1}^{s} \sum_{k=0}^{s-j}
c_{\,N+s,\,j,\,k}\,P_{2j}(\xi_{0})\,r^{2(s-k)}
G_{\n+2s-k}(r)\\*[5pt] & = \sum_{j=1}^{N-1} \sum_{s=j}^{N-1}
\sum_{k=0}^{s-j} c_{\,N+s,\,j,\,k}\,P_{2j}(\xi_{0})\,r^{2(s-k)}
G_{\n+2s-k}(r)\\*[5pt] & := \sum_{p=1}^{\infty} a_{2p}^{N}(\xi_{0})
r^{2p}\,.
\end{split}
\end{equation*}

In order to compute the coefficients $ a_{2p}^{N}(\xi_{0})$ we
substitute the power series expansion of $G_{q}(r)$ \cite[A-8]{Gr},
namely,
\begin{equation}\label{eq75}
G_{q}(r)=\sum _{i=0}^{\infty}\frac{(-1)^{i}}{i!\, \Gamma(q+i+1)}
\frac{r^{2i}}{2^{2i+q}}\,,
\end{equation}
in the last triple sum above.

\begin{proof}[Proof of Lemma \ref{L11}]
We are assuming that $1 \le p \le N-1$. It is important to remark
that, for this range of $p$, after introducing \eqref{eq75} in the
triple sum above,  only the values of the index~$j$ satisfying $1
\le j \le p$ are involved in the expression for $a_{2p}^N$. Once
\eqref{eq75} has been introduced in the triple sum one should sum,
in principle, on the four indexes $i$, $j$, $s$ and $k$. But since
we are looking at the coefficient of $r^{2p}$ we have the relation
$2(s-k)+2i=2p$, which actually leaves us with three indexes. The
range of each of these indexes is easy to determine and one gets
$$
a_{2p}^N  = \sum_{j=1}^{p}P_{2j}(\xi_{0}) \sum_{i=0}^{p-j}
\sum_{s=p-i}^{N-1} c_{\,N+s,\,j,\,s-(p-i)}
 \times \text{coefficient of $r^{2i}$ from $G_{\n+s+p-i}(r)$}\,.
$$
In view of \eqref{eq75}
\begin{equation*}
\begin{split}
a_{2p}^N &= \sum_{j=1}^{p}P_{2j}(\xi_{0}) \sum_{i=0}^{p-j}
\sum_{s=p-i}^{N-1} c_{\,N+s,\,j,\,s-(p-i)} \frac{(-1)^{i}}{i!
2^{i+\n+s+p}\Gamma(\n+s+p+1)}\\*[7pt] & = \hspace{1.5cm}[\text{by
the expression \eqref{eq74} for $c_{l,j,k}$}]\\*[7pt] & =
\sum_{j=1}^{p}P_{2j}(\xi_{0}) \sum_{i=0}^{p-j} \sum_{s=p-i}^{N-1}
\frac{(-1)^{i+1}}{i! 2^{i+\n+s+p}\Gamma(\n+s+p+1)}
\frac{(-1)^{s-(p-i)}}{V_{n}} \\*[7pt] &  \hspace{1.5cm}\times
\frac{\gamma_{2j}\displaystyle \binom{N+\n-1}{N-1}  2^{s-(p-i)}
(N-j)! \binom{N+s-1+\n}{N-j} \binom{\n+j+s-1}{s-(p-i)} }
{(s+\n)(N-s-1)!  (p-i-j)! }\\*[7pt] & =
(-1)^{p+1}\sum_{j=1}^{p}P_{2j}(\xi_{0})  \frac{(-1)^{j} \pi^\n
\Gamma(j) (N-j)!}{V_{n} \Gamma(\n+j)2^{\n+2p}}\sum_{i=0}^{p-j}
\frac{\displaystyle \binom{N+\n-1}{N-1} }{i! (p-i-j)! }\\*[7pt] &
\hspace{1.5cm}\times  \sum_{s=p-i}^{N-1} \frac{(-1)^s\displaystyle
\binom{N+s-1+\n}{N-j} \binom{\n+j+s-1}{ s-(p-i)} } {(s+\n)(N-s-1)!
\Gamma(\n+s+p+1)}\,.
\end{split}
\end{equation*}

In Lemma \ref{L15} below we give a useful compact form for the last
sum. Using it we obtain
\begin{multline*}
a_{2p}^N  = (-1)^{p+1}\sum_{j=1}^{p}P_{2j}(\xi_{0}) \frac{(-1)^{j}
\pi^\n \Gamma(j) (N-j)!}{V_{n} \Gamma(\n+j)2^{\n+2p}}
\sum_{i=0}^{p-j} \frac{\displaystyle \binom{N+\n-1}{N-1} }{i!
(p-i-j)! }\\*[7pt] \times \frac{ (-1)^{p-i}(N-p-1)!
\displaystyle\binom{N-1}{p}}{(N-j)! \Gamma(N+\n)
j!\binom{\n+p-i+j-1}{j}}\,.
\end{multline*}
Easy arithmetics with binomial coefficients gives
$$
\binom{N+\n-1}{N-1}(N-p-1)!\binom{N-1}{p}\frac{1}{\Gamma(N+\n)}=
\frac{1}{p! \, \Gamma(\n+1)}\,.
$$
We finally get the extremely surprising identity
\begin{equation}\label{eq78}
 a_{2p}^N \!=\! \frac{-\pi^\n}{V_{n}
2^{\n+2p}p!\Gamma(\n+1)} \!\sum_{j=1}^{p}\! \frac{(-1)^j \Gamma(j)
P_{2j}(\xi_{0})} {\Gamma(\n+j) } \sum_{i=0}^{p-j} \frac{(-1)^{i}}{
i! (p-i-j)! j! \binom{\n+p-i+j-1}{j}}\,,
\end{equation}
in which $N$ has miraculously disappeared.  Thus Lemma \ref{L11} is
proved.
\end{proof}

\begin{proof}[Proof of Lemma \ref{L12}]
We start by proving the inequality \eqref{eq59bis}, so that $1\le p
\le N-1$.  We roughly estimate  $a_{2p}= a_{2p}^N$ by putting the
absolute value inside the sums in \eqref{eq78}. The absolute value
of each term in the innermost sum in \eqref{eq78} is obviously not
greater than~$1$ and there are at most $p$~terms. The factor in
front of $P_{2j}(\xi_0)$ is again not greater than~$1$ in absolute
value. Denoting by $C$ the terms that depend only on $n$ we obtain
the desired inequality \eqref{eq59bis}.

We turn now to the proof of inequality \eqref{eq59tris}. Recall that
$$
 \widehat{S_{N}\,\chi_{B}}(r\xi_{0})  = \sum_{p=1}^{\infty}
a_{2p}^{N}(\xi_{0}) r^{2p}
 =  \sum_{j=1}^{N-1} \sum_{s=j}^{N-1} \sum_{k=0}^{s-j}
c_{\,N+s,\,j,\,k}\,\,P_{2j}(\xi_{0})\,\,r^{2(s-k)} G_{\n+2s-k}(r)\,.
$$
Replacing $G_{\n+2s-k}(r)$ by the expression given by \eqref{eq75}
we obtain, as before, a sum with four indexes. Now we eliminate the
index $i$ of  \eqref{eq75} using $s-k+i=p$. Hence
\begin{equation*}
\begin{split}
a_{2p}^N & = \sum_{j=1}^{N-1}P_{2j}(\xi_{0}) \sum_{s=j}^{N-1}
\sum_{k=0}^{s-j} c_{\,N+s,\,j,\, k}
 \times \text{coefficient of $r^{2(p-s+k)}$ from $G_{\n+2s-k}(r)$}\\*[7pt]
&=\sum_{j=1}^{N-1}P_{2j}(\xi_{0}) \sum_{s=j}^{N-1} \sum_{k=0}^{s-j}
c_{\,N+s,\,j,\, k}
\frac{(-1)^{p-s+k}}{(p-s+k)!\Gamma(\n+p+s-1)2^{2p+\n+k}}\\*[7pt]
&=\sum_{j=1}^{N-1}P_{2j}(\xi_{0}) \sum_{k=0}^{N-1-j}
\sum_{s=j+k}^{N-1} c_{\,N+s,\,j,\, k}
\frac{(-1)^{p-s+k}}{(p-s+k)!\Gamma(\n+p+s-1)2^{2p+\n+k}}\\*[7pt] &=
\frac{(-1)^{p}}{V_{n}4^{p}2^\n}\binom{N+\n-1}{N-1}\sum_{j=1}^{N-1}
\gamma_{2j}P_{2j}(\xi_{0}) \sum_{k=0}^{N-1-j} \sum_{s=j+k}^{N-1}
\\*[7pt] &\quad\times  \frac{(-1)^s (N-j)!
\displaystyle\binom{N+s-1+\n}{N-j}\binom{\n+j+s-1}{k}}{(p-s+k)!
\Gamma(\n+p+s+1) (s+\n) (N-s-1)! (s-j-k)!}\,.
\end{split}
\end{equation*}
The second identity is just \eqref{eq75}. The third is a change of
the order of summation and the latest follows from the formula
\eqref{eq74} for the constants $c_{l,j,k}$.

In view of the elementary fact that
$$
(N-j)! \displaystyle\binom{N+s-1+\n}{N-j}\binom{\n+j+s-1}{k} =
\frac{\Gamma(s+\n+N)}{k! \Gamma(s+\n+j-k)}
$$
we get
\begin{equation*}
\begin{split}
&\left| \sum_{k=0}^{N-1-j} \sum_{s=j+k}^{N-1}  \frac{(-1)^s (N-j)!
\displaystyle\binom{N+s-1+\n}{N-j}\binom{\n+j+s-1}{k}}{(p-s+k)!
\Gamma(\n+p+s+1) (s+\n) (N-s-1)! (s-j-k)!} \right | \\*[7pt] &\le
\sum_{k=0}^{N-1-j}\frac{1}{k!}\\*[7pt]
&\quad\times\sum_{s=j+k}^{N-1}\frac{1}{\Gamma(s+\n+j-k)(p-s+k)!
(\n+p+s) (s+\n) (N-s-1)! (s-j-k)!}\\*[7pt] &\le \sum_{k=0}^{N-1-j}
\frac{1}{k!}\sum_{s=j+k}^{N-1}\frac{1}{ (s-j-k)!} \le e^2 \,,
\end{split}
\end{equation*}
where in the first inequality we used that, since $N \le p$,
$$
\frac{\Gamma(s+\n+N)}{\Gamma(\n+p+s+1)} \le \frac{1}{\n+p+s}\,.
$$
The proof of \eqref{eq59tris} is complete.
\end{proof}

\begin{lemma}\label{L15}
Let $N-1\ge L\ge j\ge k\ge 0$ be integers. Then
\begin{multline*}
\sum_{s=j}^{N-1} \frac{(-1)^s\binom{\n+N+s-1}{N-k}\binom{\n+k+s-1}
{s-j}}{(s+\n)(N-s-1)!\Gamma(\n+s+L+1)} \\*[7pt] =
(-1)^{j}\frac{(N-L-1)!}{(N-k)!}\binom{N-1}{L}\frac{1}{k!\,
\Gamma(\n+N)}\frac{1}{\binom{\n+k+j-1}{k}}\,.
\end{multline*}
\end{lemma}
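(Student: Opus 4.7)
The plan is to reduce the identity, through a few algebraic simplifications, to the classical finite-difference formula
$$\sum_{t=0}^{M}\binom{M}{t}\frac{(-1)^t}{c+t}=\frac{M!\,\Gamma(c)}{\Gamma(c+M+1)}.$$
First, combine the two binomial coefficients via $\Gamma$-functions. The common factor $\Gamma(\n+k+s)$ cancels, giving
$$\binom{\n+N+s-1}{N-k}\binom{\n+k+s-1}{s-j}=\frac{\Gamma(\n+N+s)}{(N-k)!\,(s-j)!\,\Gamma(\n+k+j)},$$
so I can pull $(N-k)!\,\Gamma(\n+k+j)$ out of the sum and substitute $t=s-j$, $M=N-j-1$. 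The inner sum then has the form $\frac{(-1)^j}{M!}\sum_{t=0}^{M}\binom{M}{t}(-1)^t\frac{Q(t)}{\n+j+t}$, where
$$Q(t)=\frac{\Gamma(\n+N+j+t)}{\Gamma(\n+j+t+L+1)}=\prod_{i=L+1}^{N-1}(\n+j+t+i)$$
is a polynomial in $t$ of degree $N-L-1$.

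Next, perform polynomial division: $Q(t)=(\n+j+t)\widetilde Q(t)+Q(-\n-j)$, with $\deg \widetilde Q=N-L-2$. The crucial point is that the hypothesis $L\ge j$ yields $N-L-2<N-j-1=M$, so by the elementary identity $\sum_{t=0}^{M}\binom{M}{t}(-1)^t P(t)=0$ for $\deg P<M$, the polynomial part vanishes. What remains is
$$Q(-\n-j)\sum_{t=0}^{M}\binom{M}{t}\frac{(-1)^t}{\n+j+t}.$$
A direct evaluation of the remainder at the root gives $Q(-\n-j)=\prod_{i=L+1}^{N-1}i=(N-1)!/L!$, and the classical formula above with $c=\n+j$ evaluates the remaining sum to $M!\,\Gamma(\n+j)/\Gamma(\n+N)$.

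Collecting everything, the left-hand side reduces to
$$\frac{(-1)^j\,(N-1)!\,\Gamma(\n+j)}{(N-k)!\,L!\,\Gamma(\n+k+j)\,\Gamma(\n+N)}.$$
To match the stated right-hand side, I rewrite $\binom{\n+k+j-1}{k}^{-1}=k!\,\Gamma(\n+j)/\Gamma(\n+k+j)$ and $\binom{N-1}{L}(N-L-1)!=(N-1)!/L!$, which is a straightforward bookkeeping check.

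The only genuine obstacle is recognizing the correct partial-fraction decomposition and observing that the degree inequality $N-L-2<N-j-1$ — precisely the hypothesis $L\ge j$ — is what makes the polynomial part of the sum vanish. Once this structural observation is made, everything else reduces to two classical formulas (the alternating binomial sum against $1/(c+t)$ and the vanishing of finite differences of low-degree polynomials), together with routine manipulation of $\Gamma$-functions and binomial coefficients.
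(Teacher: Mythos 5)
Your proof is correct, and it takes a genuinely different route from the paper's. You merge the two binomial coefficients into a single Gamma ratio (the factor $\Gamma(\n+k+s)$ indeed cancels), observe that $\Gamma(\n+N+j+t)/\Gamma(\n+j+t+L+1)=\prod_{i=L+1}^{N-1}(\n+j+t+i)$ is a polynomial $Q(t)$ of degree $N-L-1$, split off the remainder $Q(-\n-j)=(N-1)!/L!$ upon division by $t+\n+j$, kill the polynomial part by the vanishing of the $M$-th finite difference of a polynomial of degree $<M=N-j-1$ (this is exactly where $L\ge j$ enters), and evaluate the surviving sum by the Beta-type identity $\sum_{t=0}^{M}\binom{M}{t}(-1)^t/(c+t)=M!\,\Gamma(c)/\Gamma(c+M+1)$ with $c=\n+j$; the final bookkeeping with $\binom{N-1}{L}(N-L-1)!=(N-1)!/L!$ and $\binom{\n+k+j-1}{k}^{-1}=k!\,\Gamma(\n+j)/\Gamma(\n+k+j)$ matches the stated right-hand side. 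The paper instead stays entirely within binomial-coefficient manipulations in the style of Graham--Knuth--Patashnik: after reindexing it isolates a quantity $D(j,m)$ (with $L=m+j$), proves the recursion $D(j,m)=\frac{1}{\n+j}D(j+1,m-1)$ by writing $\frac{1}{\n+i+j}=\frac{1}{\n+j}\bigl(1-\frac{i}{\n+i+j}\bigr)$ and showing one piece vanishes via GKP (5.24), and then evaluates $D(L,0)$ using GKP (5.14) and the triple-binomial identity (5.28), which is also the tool used elsewhere in that section. Your argument is shorter and more transparent about the role of the hypothesis $L\ge j$, and it relies only on two classical facts; the paper's argument is computationally heavier but self-consistent with the combinatorial machinery (and the specific GKP identities) it uses for the neighbouring lemmas.
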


\begin{proof} To simplify notation set
$$
A = \sum_{s=j}^{N-1}
\frac{(-1)^s\binom{\n+N+s-1}{N-k}\binom{\n+k+s-1}
{s-j}}{(s+\n)(N-s-1)!\Gamma(\n+s+L+1)}\,.
$$
Making the change of index of summation $i=s-j$ and using repeatedly
the identity~$\Gamma(x+1)=x\,\Gamma(x)$ we have
\begin{equation*}
\begin{split}
 A &=  \frac{(-1)^{j}}{\Gamma(\n+j+L)}\sum_{i=0}^{N-1-j}
\frac{(-1)^{i}\binom{\n+N+i+j-1}{N-k}\binom{\n+k+i+j-1
}{i}}{(i+j+\n)(N-i-j-1)!\{\prod_{p=0}^{i}(\n+i+j+L-p)\}}\\*[7pt]
 &= \frac{(-1)^{j}}{\Gamma(\n+j+L)}\; B \,,
\end{split}
\end{equation*}
where the last identity defines $B$. To compute $B$ we need to
rewrite the terms in a more convenient way so that we may apply well
known equalities, among which the triple-binomial
identity~\eqref{eq74bis}. Notice that
\begin{multline*}
 \frac{\binom{\n+N+i+j-1}{N-k}\binom{\n+k+i+j-1}{i}}{\{\prod_{p=0}^{i}(\n+i+j+L-p)\}}
 \\*[7pt]
 =\frac{(N-L-1)!(L-k)!}{i!(N-k)!} \binom{\n+N+i+j-1}{N-L-1}\binom{\n+j+L-1}{L-k}\,.
\end{multline*}
Hence
$$
B\!=\!\sum_{i=0}^{N-1-j}\!\frac{(-1)^i(N\!-\!L\!-\!1)!(L-k)!}{i!(N\!-\!k)!(N\!-\!i\!-\!j\!-\!1)!(i+j+\n)}\binom{\n+N+i+j-1}{N-L-1}\!\binom{\n+j+L-1}{L-k}\,.
$$
Since clearly
$$
 \frac{\Gamma(\n+N)}{\Gamma(\n+j)} \frac{1}{(i+j+\n)(N-i-j-1)!}
=  \binom{\n+N-1}{N-j-i-1}\binom{\n+j+i-1}{i}i! \,,
$$
$B$ can be written as
\begin{equation*}
\begin{split}
B&=
\frac{\Gamma(\n+j)(N-L-1)!(L-k)!}{\Gamma(\n+N)(N-k)!}\binom{\n+j+L-1}{L-k}
\\*[7pt]
&\quad\times\sum_{i=0}^{N-1-j}(-1)^i\binom{\n+N-1}{N-j-i-1}\binom{\n+j+i-1}{i}\binom{\n+N+i+j-1}
{N-L-1} \\*[7pt] &=
\frac{\Gamma(\n+j)(N-L-1)!(L-k)!}{\Gamma(\n+N)(N-k)!}\binom{\n+j+L-1}
{L-k}\;C \,,
\end{split}
\end{equation*}
where the last identity defines $C$.

 To simplify notation set
$L=m+j,$ where $0\le m\le L-j.$ Using the elementary identity
$$
\binom{\n+N-1}{N-j-i-1}\binom{\n+i+j-1}{i}
=\frac{(N-j)!}{i!(N-i-j-1)!(\n+i+j)}\binom{\n+N-1}{N-j}\,,
$$
we get
$$
C  =\binom{\n+N-1}{N-j}(N-j)!\sum_{i=0}^{N-1-j}\frac{(-1)^i
\binom{\n+N+i+j-1}{N-j-m-1}}{i!\,(N-i-j-1)!\,(\n+i+j)}\,.
$$
The only task left is the computation of the sum
$$
D(j,m)= \sum_{i=0}^{N-1-j}\frac{(-1)^i
\binom{\n+N+i+j-1}{N-j-m-1}}{i!\,(N-i-j-1)!\,(\n+i+j)}\,.
$$
The identity
$$
\frac{1}{\n+i+j}=\frac{1}{\n+j}\left(1-\frac{i}{\n+i+j}\right)\,,
$$
yields the expression
\begin{equation*}
\begin{split}
D(j,m)&
=\sum_{i=0}^{N-1-j}\frac{(-1)^i}{(\n+j)i!(N-i-j-1)!}\binom{\n+i+j+N-1}{N-j-m-1}
\\*[7pt] &\quad -\sum_{i=0}^{N-1-j}\frac{(-1)^i
i}{i!(\n+j)(\n+j+i)(N-i-j-1)!}\binom{\n+i+j+N-1}{N-j-m-1}\,.
\end{split}
\end{equation*}
The first sum in the above expression for $D(j,m)$ turns out to
vanish for $m \ge 1$. This is because
\begin{equation*}
\begin{split}
\sum_{i=0}^{N-1-j}&\frac{(-1)^i}{(\n+j)i!(N-i-j-1)!}\binom{\n+i+j+N-1}
{N-j-m-1} \\*[7pt]
 &= \frac{1}{(N-j-1)!\;(\n+j)}\sum_{i=0}^{N-1-j}(-1)^i\binom{N-j-1}
{i}\binom{\n+i+j+N-1}{N-j-m-1}\\*[7pt]
  & = (-1)^{N-j-1} \binom{\n+j+N-1}{-m}\\*[7pt]
& =0 \,,
\end{split}
\end{equation*}
where the next to the last equality follows from an identity proven
in \cite[(5.24), p.~169]{GKP} and the last equality follows from the
fact that $\displaystyle\binom{s}{k}=0$ provided $k$ is a negative
integer. Hence, setting $s=i-1$,
\begin{equation*}
\begin{split}
D(j,m) &=-\sum_{i=0}^{N-1-j}\frac{(-1)^i\;
i}{i!(\n+j)(\n+j+i)(N-i-j-1)!}\binom{\n+i+j+N-1}{N-j-m-1}\\*[7pt]
 & =\frac{1}{\n+j}\sum_{s=0}^{N-(j+1)-1}\frac{(-1)^s}{s!(\n+s+(j+1))}\frac{\binom{\n+(j+1)+N+s-1}{N-(j+1)-(m-1)-1}}{(N-(j+1)-s-1)!} \\*[7pt]
& = \frac{1}{(\n+j)}D(j+1,m-1)\,.
\end{split}
\end{equation*}
Repeating  the above argument $m$ times we obtain that
$$
D(j,m)=\frac{1}{(\n+j)(\n+j+1)\cdots (\n+j+m-1)}\;D(L,0)\,.
$$
To compute $D(L,0)$ we use the elementary identity
$$
\frac{1}{(\n+L+s)}=\frac{s!(N-L-s-1)!\Gamma(\n+L)}{\Gamma(\n+N)}\binom{\n+L+s-1}
{s}\binom{\n+N-1}{N-L-s-1}\,,
$$
from which we get
\begin{equation*}
\begin{split}
D(L,0)&=\sum_{s=0}^{N-1-L}\!(-1)^s\frac{\Gamma(\n+L)}{\Gamma(\n+N)}\!\binom{\n+L+s-1}{s}\!\binom{\n+N-1}{N\!-\!L\!-\!s\!-\!1}\!\binom{\n\!+\!L\!+\!N\!+\!s\!-\!1}{N-L-1}\\*[7pt]
&
=\frac{\Gamma(\n+L)}{\Gamma(\n+N)}\sum_{s=0}^{N-1-L}\binom{-\n-L}{s}\binom{\n+N-1}{N-L-s-1}\binom{\n+L+N+s-1}{N-L-1}\\*[7pt]
&=\frac{\Gamma(\n+L)}{\Gamma(\n+N)}\binom{N-1}{L}\,,
\end{split}
\end{equation*}
where in the second identity we applied \cite[(5.14), p.~164]{GKP}
and the latest equality is consequence of the triple-binomial
identity \cite[(5.28), p.~171]{GKP} (for $ n=N-L-1$, $m=0$,
$r=\n+N+L-1$ and $s=N-1$). Consequently,
\begin{equation*}
\begin{split}
 D(j,m)& =\frac{\Gamma(\n+L)}{\Gamma(\n+N)}\binom{N-1}{L}\frac{1}{(\n+j)\cdots (\n+L-1)}\\*[7pt]
&=\frac{\Gamma(\n+j)}{\Gamma(\n+N)}\binom{N-1}{L}\,.
\end{split}
\end{equation*}
Hence
$$
C=\binom{\n+N-1}{N-j}\;(N-j)!\;
\frac{\Gamma(\n+j)}{\Gamma(\n+N)}\binom{N-1}{L}\,,
$$
and
\begin{multline*}
B=
\frac{\Gamma(\n+j)}{\Gamma(\n+N)}\frac{(N-L-1)!(L-k)!}{(N-k)!}\binom{\n+j+L-1}{L-k}\binom{\n+N-1}{N-j}\\*[7pt]
 \times \binom{\n+N-1}{N-j}\;(N-j)!\;\frac{\Gamma(\n+j)}{\Gamma(\n+N)}\binom{N-1}{L}\,.
\end{multline*}
Finally, after appropriate simplifications,
\begin{equation*}
\begin{split}
A&= \frac{(-1)^{j}}{\Gamma(\n+j+L)}\;B \\*[7pt] &=
(-1)^{j}\frac{(N-L-1)!}{(N-k)!}\frac{\binom{N-1}{L}}{\Gamma(\n+N)k!\binom{\n+j+k-1}
{k}}\,,
\end{split}
\end{equation*}
which completes the proof of the lemma.
\end{proof}

\section{Examples and questions}

\begin{example}
 Consider the polynomial operator $T=T_\lambda$  in $\mathbb{R}^2$
determined by
$$
\Omega(x,y) = \frac{xy}{|z|^2}+\lambda \frac{x^3y-x y^3}{|z|^4}\,,
$$
where $z= x+i y$. We claim that the inequality $
 \| T_\lambda^{\star} f  \|_2 \leq  C \| T_\lambda f \|_2$, $f \in L^2(\mathbb{R}^2)$,
holds if and only if $|\lambda|<2$. This follows from the Theorem
because the
 multiplier of the operator $T_\lambda$ is
 $$
 \frac{\xi\eta}{\xi^2 + \eta^2}\left(-\pi +\lambda\frac{\pi}{2} \frac{(\xi^2
 -\eta^2)}{\xi^2 + \eta^2}\right)\,,
 $$
$\frac{\xi\eta}{\xi^2 + \eta^2} $ is the multiplier of a second
order Riesz Transform and the function $ -\pi +\lambda\frac{\pi}{2}
\frac{(\xi^2  -\eta^2)}{\xi^2 + \eta^2}$ has no zeroes on the unit
circle if and only if $|\lambda| < 2$.
\end{example}

\begin{example}
Let $B$ stand for the Beurling transform and consider  the operator
$$
T_\lambda = B-\lambda\,B^2, \quad \lambda \in \C\,,
$$
which corresponds to
$$
\Omega(z)= -\frac{1}{\pi} \left(\frac{\overline{z}^2 |z|^2 +
2\,\lambda \, \overline{z}^4}{|z|^4}\right) \,.
$$
In this case the multiplier is
$$
\frac{\overline{\xi}^3 \xi \,-\lambda\,\overline{\xi}^4}{|\xi|^4}=
\frac{\overline{\xi}}{\xi} \left( 1 - \lambda \,
\frac{\overline{\xi}}{\xi}\right),\quad \xi\in \mathbb C \, ,
$$
which is the multiplier of $B$ times a function that vanishes on the
unit circle  if and only if $|\lambda|=1.$ Then the Theorem tells us
that $  \| T_\lambda^{\star} f  \|_2 \leq  C \| T_\lambda f \|_2$,
$f \in L^2(\mathbb{R}^2)$, if and only if $|\lambda|\ne 1$. Indeed,
strictly speaking, the necessary condition of the Theorem, i.e.\
(iii) implies (i), applies only to \emph{real} homogeneous
polynomials. However, in the case at hand the factorization results
one needs can be checked by direct inspection.
\end{example}

\begin{example}
We give an example of a polynomial operator $T$ which is of the
form~$T=R\circ U$, where $R$ is an even higher order Riesz Transform
and $U$ is invertible, but the $L^2$ inequality $ \| T^{\star} f
\|_2 \leq  C \| T f \|_2$, $f \in L^2(\Rn)$, does not hold. The
operator $T$ is associated with the homogenous polynomial of
degree~$8$
$$
P(x,y)=\frac{1}{\gamma_2}\,P_2(x,y)(x^2+y^2)^3+\varepsilon \left(
\frac{1}{\gamma_4}\,P_4(x,y)(x^2+y^2)^2
-\frac{1}{\gamma_8}\,P_8(x,y)\right)\,,
$$
where
\begin{equation*}
\begin{split}
P_2(x,y) &=xy \,,\\
P_4(x,y)&=x^4 -6x^2y^2 +y^4\,,\\
\intertext{and}
 P_8(x,y)&=x^8 +y^8 -28x^6 y^2 -28 x^2y^6+70x^4 y^4
\end{split}
\end{equation*}
are harmonic polynomials and $\varepsilon$ is  small enough. Notice
that $P_2$ does not divide $P_4$ nor $P_8$.  Therefore, by the
Theorem, there is no control of $T^{\star}$ in terms of $T$. On the
other hand, $P_4$ and $P_8$ have been chosen so that
$P_4(\xi_1,\xi_2)|\xi|^4 -P_8(\xi_1,\xi_2)$ is divisible by $P_2$,
and so the multiplier of $T$ is
$$
\frac{P_2(\xi_1,\xi_2)}{|\xi|^2}+\varepsilon\left(
\frac{P_4(\xi_1,\xi_2)}{|\xi|^4}
-\frac{P_8(\xi_1,\xi_2)}{|\xi|^8}\right) =
\frac{P_2(\xi_1,\xi_2)}{|\xi|^2} \left(1 +\varepsilon
\frac{Q(\xi_1,\xi_2)}{|\xi|^6} \right)\,,
$$
where $Q$ is a homogeneous polynomial of degree~$6$. Define $R$ as
the higher order Riesz transform whose multiplier is
$\frac{P_2(\xi_1,\xi_2)}{|\xi|^2}$ and  $U$ as the operator whose
multiplier is $1 +\varepsilon \frac{Q(\xi_1,\xi_2)}{|\xi|^6}$. If
$\varepsilon$ is small enough, then $U$ is invertible.
\end{example}

\begin{example}
We show here that condition (iii) in the Theorem is very
restrictive. Take $n=2$. Since $\Omega(e^{i\theta})$ is an even
function it has, modulo a rotation,  a Fourier series expansion of
the type
\begin{equation}\label{eq80}
\Omega(e^{i\theta}) = \sum_{j\geq 1} a_j\, \sin (2 j \theta)\,.
\end{equation}
Thus the harmonic polynomials $P_{2j}$ are
$$
P_{2j}(z) = a_j  \frac{z^{2j}-\overline{z}^{2j}}{2i}\,.
$$
If $a_j \ne 0$, then $P_{2j}$  vanishes exactly on $2j$ straight
lines through the origin uniformly distributed and containing the
two axis. Each such line is determined by a pair of opposed $4j$-th
roots of the unity. Assume now that the operator determined
by~$\Omega (e^{i\theta})$ satisfies condition $(iii)$ of the
Theorem. Let $j_0$ be the first positive integer with $a_{j_0} \ne
0$. Then only the $a_j$ with $j$ a multiple of $j_0$ may be
non-zero, owing to the particular structure of the zero set of
$P_{2j}$. In other words, the first non-zero $P_{2j}$ determines all
the others, modulo the constants $a_j$.
\end{example}

\begin{example}
We show now a method to construct even kernels in $\mathbb{R}^3$
that satisfy condition $(iii)$ in the Theorem. It can be easily
adapted to any dimension. The kernel is determined by the function
$$
\Omega(x,y,z)= xy \,\sum_{j\geq 0} \epsilon_j\,Q_{2j}(x,y,z)
$$
where the sequence $(\epsilon_j)$ is chosen so that $\Omega$ is in
$C^\infty(S^{2})$ and the $Q_{2j}$ are defined by
$$
Q_{2j}(x,y,z)= \sum_{k=0}^{2j} c_k\,y^{2k}\,z^{2j-2k}\,.
$$
The $c_k$ are determined by a recurrent formula obtained by
requiring that\linebreak $ xy\,Q_{2j}(x,y,z)$ is harmonic. Computing
its Laplacean we get the recurrent condition
$$
c_k= - c_{k-1}  \frac{2k (2k+1)}{(2j-2k+1)\,(2j-2k+2)},\quad 1\leq k
\leq j\,,
$$
where $c_0$ may be freely chosen.
\end{example}

We would like to close the paper by asking a couple of questions
which we have not been able to answer.

\begin{question}
Since our methods are very much dependent on the Fourier transform
we do not know whether either the weak type inequality
$$
\|T^\star f \|_{1,\infty} \leq C\| Tf \|_1 \,,
$$
or the $L^p$ inequality with $ 1<p< \infty$, $p \ne 2$,
$$
\| T^{\star} f  \|_p \leq  C \| T f \|_p ,\quad f \in L^p(\Rn)\,,
$$
imply the $L^2$ inequality
$$
\| T^{\star} f  \|_2 \leq  C \| T f \|_2,\quad f \in L^2(\Rn)\,.
$$
As suggested by Carlos P\'{e}rez,  this might be related to
interpolation results for couples of sub-linear operators.
\end{question}

\begin{question}
How far may the smoothness assumption on $\Omega$ be weakened ? More
concretely, does the Theorem still hold true for $\Omega$ of class
$C^m(S^{n-1})$ for some positive integer $m$ ?
\end{question}

\begin{gracies}
The authors were partially supported by grants\newline 2009SGR420
(Generalitat de Catalunya) and  MTM2010-15657 (Ministerio de Ciencia
e Innovaci\'{o}n).

The authors are indebted to F.~Ced\'{o} and X.~Xarles for several
illuminating conversations on the Division Lemma.
\end{gracies}

\begin{tabular}{l}
Joan Mateu\\
Departament de Matem\`{a}tiques\\
Universitat Aut\`{o}noma de Barcelona\\
08193 Bellaterra, Barcelona, Catalonia\\
{\it E-mail:} {\tt mateu@mat.uab.cat}\\ \\
Joan Orobitg\\
Departament de Matem\`{a}tiques\\
Universitat Aut\`{o}noma de Barcelona\\
08193 Bellaterra, Barcelona, Catalonia\\
{\it E-mail:} {\tt orobitg@mat.uab.cat}\\ \\
Joan Verdera\\
Departament de Matem\`{a}tiques\\
Universitat Aut\`{o}noma de Barcelona\\
08193 Bellaterra, Barcelona, Catalonia\\
{\it E-mail:} {\tt jvm@mat.uab.cat}
\end{tabular}

\end{document}